\theoremstyle{plain}
	\newtheorem{thm}{Theorem}
		\numberwithin{thm}{section}
	\newtheorem{lemma}[thm]{Lemma}
	\newtheorem{prop}[thm]{Proposition}
	\newtheorem{cor}[thm]{Corollary}
	\newtheorem*{thm*}{Theorem}
	\newtheorem*{lemma*}{Lemma}
	\newtheorem*{prop*}{Proposition}
	\newtheorem*{cor*}{Corollary}
	\newtheorem*{conj*}{Conjecture}
\theoremstyle{definition}
	\newtheorem{example}[thm]{Example}
	\newtheorem*{example*}{Example}
	\newtheorem{question}{Question}
	\newtheorem{remark}[thm]{Remark}
\begin{document}


\title[Affine Representability and decision procedures]{Affine representability and decision procedures for commutativity theorems for rings and algebras}

\author[J. P. Bell]{Jason P. Bell}
\address{Department of Pure Mathematics, University of Waterloo, Waterloo, ON Canada N2L 3G1}
\email{jpbell@uwaterloo.ca}

\author[P. V. Danchev]{Peter V. Danchev}
\address{Institute of Mathematics and Informatics, Bulgarian Academy of Sciences, ``Acad. G. Bonchev" str., bl. 8, 1113 Sofia, Bulgaria}
\email{danchev@math.bas.bg; pvdanchev@yahoo.com}

\keywords{Specht's problem, Kemer's solution, PI-rings, commutativity, Jacobson theorem, Herstein theorem}
\subjclass[2010]{Primary 16R10, 16R30, 16R60}

\begin{abstract} We consider applications of a finitary version of the Affine Representability theorem, which follows from recent work of Belov-Kanel, Rowen, and Vishne. Using this result we are able to show that when given a finite set of polynomial identities, there is an algorithm that terminates after a finite number of steps which decides whether these identities force a ring to be commutative.  We then revisit old commutativity theorems of Jacobson and Herstein in light of this algorithm and obtain general results in this vein.  In addition, we completely characterize the homogeneous multilinear identities that imply the commutativity of a ring.
\end{abstract}

\maketitle
\section{Introduction and Background}

One of the crowning achievements in the theory of rings satisfying a polynomial identity is Kemer's solution in characteristic zero \cite{K1}--\cite{K4} to the Specht problem \cite{S}, which asks whether every set of polynomial identities of an algebra is a consequence of a finite number of identities.  A key component of Kemer's work is the Affine Representability theorem (see \cite{Alja}), which shows that for a finitely generated algebra $A$ satisfying an identity over an infinite field, there is a finite-dimensional algebra $B$ (possibly over a larger field) that satisfies the exact same set of identities as $A$. The original groundbreaking work of Kemer has since been expanded by others, including notably Belov-Kanel, Rowen, and Vishne \cite{B1,B2,B3,BRV1,BRV2,BRV3}, who extended much of Kemer's theory to the setting of algebras over commutative noetherian base rings.  It is clear that the Affine Representability theorem does not in general have a positive solution for algebras over a finite field; for example, if $A=\mathbb{F}_p[t]$ then $A$ cannot satisfy the exact same set of identities as a finite-dimensional $\mathbb{F}_p$-algebra $B$, because a finite-dimensional $\mathbb{F}_p$-algebra satisfies an identity of the form $X^m-X^n=0$ for some $m$ and $n$ with $m>n$, while $A$ satisfies no such identity.  In practice, however, one is often only concerned with finite sets of non-identities. When one restricts one's attention to this setting, it then becomes a natural question of whether this finitary version of the Affine Representability theorem holds. 

\begin{question} Given sets  $\mathcal{S}$ and $\mathcal{T}$ of polynomial identities with $\mathcal{T}$ finite, if there is an algebra which satisfies all of the identities from $\mathcal{S}$ and none of the identities from $\mathcal{T}$, is there a finite ring with this property?
\label{A}
\end{question}

Although this question is not explicitly answered in the literature, recent work of Belov-Kanel, Rowen, and Vishne \cite{BRV1, BRV2, BRV3} can be used to quickly show that this question has an affirmative answer (see Theorem {\ref{lem:p} for details).

Historically, the most important results involving both polynomial identities and polynomial non-identities have largely focused on the case when a collection of identities force $[X,Y]=0$ to also be an identity; that is, to show that an identity or family of 
identities force a ring to be commutative (see Herstein \cite[Chapt. 6]{Her} and the references given at the end of the chapter along with the paper of Pinter-Lucke \cite{PL}). This, for example, is the content of special cases 
of famous results of Jacobson \cite{J} and Herstein \cite{H}, which respectively assert that rings for which $X^n=X$ and $[X^n-X,Y]=0$ are identities for some fixed $n\ge 2$ are necessarily commutative.\footnote{The general version of Jacobson's result says that if for each $x$ in a ring $R$ there is some $n=n(x)\ge 2$ such that $x^n=x$ then $R$ is commutative and Herstein's result is similar, but instead now just requiring that $x^n-x$ be central in $R$.} Results of this type are 
typically called \emph{commutativity theorems}. 

An affirmative answer to Question \ref{A} gives a general approach to attacking such problems, as it reduces the analysis to looking at finite rings, although we are unaware of this approach being 
used previously.  In fact, in the special case where one is looking at $[X,Y]=0$ being a non-identity for a ring, one can give a more direct answer to Question \ref{A} that does not require the deep machinery coming 
out of the recent work of Belov-Kanel, Rowen, and Vishne on Kemer's theorem (see Theorem \ref{thm:Specht}).

One of the consequences of the fact that Question \ref{A} has an affirmative answer is that if there is a noncommutative ring that satisfies some set of identities, then there is a finite noncommutative ring satisfying all identities in the set. We use this observation to give a decision procedure to determine whether a finite set of polynomial identities forces a ring to be commutative.  In light of this result, we are able to prove the following somewhat unexpected result, which can be viewed as a sort of machine for producing commutativity theorems.
\begin{thm} There is a decision procedure that takes as input a finite number of polynomial identities $P_1=\cdots =P_m=0$ and gives as output either a finite noncommutative ring for which these identities all hold or shows that every ring for which these identities all simultaneously hold is commutative.
\label{thm:algorithm}
\end{thm}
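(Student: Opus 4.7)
The plan is to run two complementary semi-decision procedures in parallel and return the answer as soon as one of them halts. By the affirmative resolution of Question \ref{A} (Theorem \ref{lem:p}) together with the completeness of equational logic for the equational class of rings, exactly one of the two will halt in finite time, so the algorithm is in fact a decision procedure.

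Procedure A, aimed at producing a finite noncommutative witness, enumerates finite rings in order of cardinality: for each integer $n \ge 2$, one iterates over all pairs (abelian-group structure, biadditive product) on an $n$-element underlying set, discards those that fail associativity, and for each surviving ring $R$ checks in finitely many steps whether every $P_i$ vanishes identically on $R$ and whether $R$ is noncommutative. The first such $R$ encountered is output. If the identities $P_1 = \cdots = P_m = 0$ do not force commutativity, then some noncommutative ring satisfies them; Theorem \ref{lem:p} (or, in this special case, the more elementary Theorem \ref{thm:Specht}), applied with $\mathcal{T} = \{[X,Y]\}$, furnishes a finite such ring, and Procedure A must eventually reach it.

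Procedure B, aimed at certifying commutativity, enumerates the T-ideal generated by $P_1,\ldots,P_m$ in the free associative ring $\Z\langle X_1, X_2, \ldots\rangle$ --- concretely, by iteratively closing $\{P_1,\ldots,P_m\}$ under substitution of polynomials for the indeterminates, under left and right multiplication by already-produced polynomials, and under addition --- and halts with verdict ``commutative'' as soon as $[X,Y] = XY - YX$ appears in the closure. By Birkhoff's completeness theorem for equational logic, applied to the equational class of rings, the identity $[X,Y] = 0$ is a logical consequence of the $P_i = 0$ precisely when it lies in this T-ideal, so Procedure B terminates exactly when every ring satisfying the input identities is commutative.

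The main obstacle --- and the reason the theorem is interesting --- is the invocation of the finitary affine representability resolved by Question \ref{A}: without it, Procedure A could conceivably fail to terminate even when a noncommutative satisfying ring exists, and the method would collapse. All other ingredients --- enumerating finite rings, evaluating polynomials on them, and enumerating consequences in the equational theory of rings --- are routine bookkeeping, and Birkhoff's completeness theorem is classical. I would not expect this argument to produce any effective bound on the size of the finite witness in terms of the $P_i$; obtaining such an explicit bound would require an effective version of Theorem \ref{lem:p}.
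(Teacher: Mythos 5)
Your argument is correct, but it proves the theorem by a genuinely different (and much more economical) route than the paper. The paper does not use the ``two parallel semi-decision procedures'' device at all: after reducing to finite rings via Theorem \ref{thm:Specht}, it proves a structural trichotomy (Theorem \ref{thm:trichotomy}) showing that any minimal finite noncommutative witness lies in one of three explicit families ($U_p$, $B_{p,n,i}$, or the class $\mathcal{A}_p$), and then Section \ref{Algorithm} gives a bespoke, terminating test for each family, using Alon's combinatorial Nullstellensatz to bound the eligible primes and parameters, Cartier operators for the $B_{p,n,i}$ case, and Gr\"obner--Shirshov bases for $\mathcal{A}_p$. Your Procedure A shares the one genuinely hard ingredient with the paper --- the existence of a \emph{finite} noncommutative witness, i.e.\ Theorem \ref{thm:Specht} (you are right that the full strength of Theorem \ref{lem:p} is not needed here) --- while your Procedure B replaces all of the paper's Section \ref{Algorithm} with the observation that ``commutativity is forced'' is equivalent to $[X_1,X_2]$ lying in the T-ideal generated by $P_1,\ldots,P_m$, a recursively enumerable condition; the equivalence follows because the relatively free ring $\mathbb{Z}\{X_1,X_2,\ldots\}/I$ itself satisfies the identities, so if commutativity is forced it is commutative and $[X_1,X_2]\in I$. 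Since exactly one of the two complementary conditions holds, dovetailing the two enumerations terminates, and the proof is complete. What you lose relative to the paper is everything beyond bare decidability: your algorithm is an unbounded search with no a priori estimate on the size of the witness or on the length of a derivation of $[X,Y]$, whereas the paper's procedure yields explicit finite lists of candidate rings to check and is what powers the concrete applications in Sections \ref{Applications} and \ref{sec:multilinear} (e.g.\ Theorem \ref{thm:Herstein} and Theorem \ref{thm:multi}). Two very minor points: your enumeration of finite rings should restrict to unital associative rings, since that is the paper's standing convention, and your T-ideal closure should be taken inside the free \emph{unital} ring $\mathbb{Z}\{X_1,X_2,\ldots\}$ so that left and right multiplication by $\pm 1$ and by arbitrary polynomials generates the full two-sided ideal of substitution instances; both are routine.
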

By a \emph{decision procedure} we simply mean there is an algorithm that terminates after a finite number of steps.  The algorithm itself is rather lengthy to describe in general, but in practice---for specific sets of identities---it can be done reasonably quickly and we give applications of our algorithm in Section \ref{Applications}.  To produce an algorithm, we require a coarse classification of finite noncommutative rings with the property that all non-trivial homomorphic images and all proper subrings are commutative.  This is the content of Theorem \ref{thm:trichotomy} and Remark \ref{rem:trichotomy}; Theorem \ref{thm:trichotomy} is somewhat technical, but it shows that all such algebras lie in one of three infinite classes of algebras that are indexed by the prime numbers.

As a quick application of our Theorem \ref{thm:trichotomy}, we are able to completely characterize the homogeneous multilinear polynomial identities that force a ring to be commutative.  Arguably the most important class of polynomial identities are the homogeneous multilinear identities, which arise in the theory of polynomial identities via a natural linearization process.  To give this characterization, we fix a homogeneous multilinear polynomial
$$P(X_1,\ldots ,X_m)=\sum_{\sigma \in S_m} c_{\sigma}X_{\sigma(1)}\cdots X_{\sigma(m)} \in \mathbb{Z}\{X_1,\ldots ,X_m\}.$$
For each $i,j\in \{1,\ldots ,m\}$ with $i<j$ we define
\begin{equation}
\Theta_{i,j}(P) = \sum_{\{\sigma\in S_m\colon \sigma^{-1}(i)<\sigma^{-1}(j)\}} c_{\sigma}\in \mathbb{Z}.
\label{eq:Theta}
\end{equation}
Then we have the following result.

\begin{thm}
Let $P(X_1,\ldots ,X_m)=\sum_{\sigma \in S_m} c_{\sigma}X_{\sigma(1)}\cdots X_{\sigma(m)} \in \mathbb{Z}\{X_1,\ldots ,X_m\}$ be a homogeneous multilinear polynomial.  Then there is a noncommutative ring for which $P=0$ is an identity if and only if there is a prime number $p$ such that the following hold:
\begin{enumerate}
\item $p\mid P(1,1,\ldots ,1)$;
\item $p\mid \Theta_{i,j}(P)$ for $1\le i<j\le m$. 
\end{enumerate}
 Moreover, if there is such a prime $p$ for which these conditions hold, then $P=0$ is an identity for the noncommutative ring $\mathbb{F}_p\{U,V\}/(U^2,V^2,UV)$.  
 \label{thm:multi}
\end{thm}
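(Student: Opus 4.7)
The plan is to prove the two directions of the equivalence separately, with the ``moreover'' clause subsumed by the ``if'' direction.

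\textbf{Sufficiency.} Fix a prime $p$ satisfying (1) and (2), set $R_p := \mathbb{F}_p\{U,V\}/(U^2,V^2,UV)$, and verify $P=0$ directly; this both establishes the ``if'' direction and gives the ``moreover'' clause. Reducing words in $U,V$ modulo the relations shows that $R_p$ has $\mathbb{F}_p$-basis $\{1,U,V,VU\}$ and is noncommutative since $UV=0\neq VU$. By multilinearity it suffices to check $P(e_1,\ldots,e_m)=0$ on tuples with $e_i\in\{1,U,V,VU\}$, and I would split on the number of non-unit entries. No non-units gives $P(1,\ldots,1)\cdot 1$, zero by (1). Exactly one non-unit gives $P(1,\ldots,1)$ times that entry, again zero by (1). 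Exactly two non-units contributes nothing unless the pair is $\{U,V\}$ (every other non-unit pair annihilates in both orders in $R_p$), in which case the value is $VU$ times either $\Theta_{i,j}(P)$ or $P(1,\ldots,1)-\Theta_{i,j}(P)$ depending on which of $X_k,X_\ell$ took $U$, and (1)--(2) make both vanish. Three or more non-units force a sub-product of length at least three in $\{U,V,VU\}$, and a short induction using the relations shows every such sub-product is zero.

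\textbf{Necessity.} For the converse I assume $P=0$ holds on some noncommutative ring. By the finitary Affine Representability result (Theorem~\ref{lem:p}) I may replace this ring by a finite noncommutative ring $B$ satisfying $P=0$. Quotienting $B$ by a maximal two-sided ideal that keeps it noncommutative, then passing to a minimal noncommutative subring of the quotient, produces a finite noncommutative ring $B'$ in which every proper subring and every non-trivial homomorphic image is commutative, with $P=0$ still holding on $B'$ (identities are preserved under subrings and quotients). By Theorem~\ref{thm:trichotomy}, $B'$ lies in one of three infinite families indexed by primes, and the task reduces to showing, in each family, that $P=0$ on $B'$ forces congruences (1) and (2) modulo the associated prime. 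The strategy in each case mirrors sufficiency in reverse: substitutions tailored to the multiplication structure of the family isolate $P(1,\ldots,1)\bmod p$ and the various $\Theta_{i,j}(P)\bmod p$ out of the identity $P=0$.

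\textbf{Main obstacle.} The main obstacle is the necessity direction, specifically the case analysis across the three families from Theorem~\ref{thm:trichotomy}. The family containing $R_p$ itself is handled by a direct reversal of the sufficiency computation, but the other two families may lack a convenient identity element or a pair of witnesses satisfying $U_0V_0=0\neq V_0U_0$, so one must replace the substitutions ``$X_i=1$'' and ``$X_k=U,\ X_\ell=V$'' with more carefully chosen analogues that still isolate $P(1,\ldots,1)$ and each $\Theta_{i,j}(P)$ modulo $p$. By contrast, sufficiency amounts to a single explicit finite computation inside a 4-dimensional algebra.
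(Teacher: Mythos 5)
Your sufficiency argument is correct and is essentially the paper's: verify $P=0$ on the basis $\{1,U,V,VU\}$ of $R_p$ using multilinearity, with conditions (1) and (2) killing the surviving tuples. The necessity direction, however, is left genuinely incomplete: you reduce to the three families of Theorem \ref{thm:trichotomy} and then announce that one must find, family by family, ``more carefully chosen analogues'' of the substitutions $X_i=1$ and $X_k=U$, $X_\ell=V$, flagging this as the main unresolved obstacle. That obstacle is illusory, and leaving it unresolved is the gap. The worry that the other families ``lack a convenient identity element or a pair of witnesses satisfying $U_0V_0=0\neq V_0U_0$'' misses the point that the reverse direction never needs the product of the witnesses to vanish; it only needs a pair of noncommuting elements in a unital ring of prime-power characteristic, and all rings in the paper are unital by standing convention.

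The uniform argument (which is what the paper does) is this. Once you have a noncommutative ring $R$ from one of the three classes attached to a prime $p$, its characteristic is a power of $p$. Setting every $X_k=1$ gives $P(1,\ldots,1)=0$ in $R$, so $p\mid P(1,\ldots,1)$. For fixed $i<j$, pick $r,s\in R$ with $[r,s]\neq 0$ and specialize $X_i=r$, $X_j=s$, $X_k=1$ otherwise; by inspection of which permutations place $i$ before $j$, the identity becomes
$$\Theta_{i,j}(P)\,rs+\bigl(P(1,\ldots,1)-\Theta_{i,j}(P)\bigr)sr=0,$$
and since $P(1,\ldots,1)=0$ in $R$ this is $\Theta_{i,j}(P)\,[r,s]=0$. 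As $[r,s]$ is a nonzero element whose additive order is a power of $p$ exceeding $1$, the integer $\Theta_{i,j}(P)$ cannot be a unit modulo that order, so $p\mid \Theta_{i,j}(P)$. No case analysis over $U_p$, $B_{p,n,i}$, or $\mathcal{A}_p$ is required; all you need from the trichotomy (or even just from Theorem \ref{thm:Specht} plus splitting a finite ring into its $p$-primary factors) is a finite noncommutative ring of prime-power characteristic on which $P$ vanishes identically.
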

We have chosen to state the conditions in Theorem \ref{thm:multi} in terms of $P(1,\ldots ,1)$ and the $\Theta_{i,j}$, as these are integers that one can explicitly compute and so one can determine whether a prime $p$ holds for which (1) and (2) hold.  Nevertheless, for a homogeneous multilinear polynomial $P(X_1,\ldots, X_s)$, it is more natural to let $P_{i,j}$ denote specialization $P(1,\ldots ,1, X_i, 1,\ldots ,1,X_j,1,\ldots ,1)$ for $i<j$, where we have an $X_i$ in the $i$-th coordinate and $X_j$ in the $j$-th coordinate.  Then $P_{i,j} = \Theta_{i,j}(P)X_i X_j + (P(1,1,\ldots ,1)-\Theta_{i,j})X_jX_i$, and so conditions (1) and (2) are equivalent to the condition that the polynomial $P_{i,j}$ be divisible by the prime $p$ for $1\le i<j\le m$.  

The outline of this paper is as follows.  In \S\ref{Rowen} we show how one can give an affirmative answer to Question \ref{A} problem using the powerful work of Belov-Kanel, Rowen, and Vishne.  In addition, we give a direct argument in the case where one is studying rings for which $[X,Y]=0$ is a non-identity and give a coarse classification of finite noncommutative rings for which every proper homomorphic image and every proper subring is commutative.  In \S\ref{Algorithm}, we use results from \S\ref{Rowen} to give an algorithm, which proves Theorem \ref{thm:algorithm}.  In \S\ref{Applications} we revisit the fixed-degree versions of old commutativity theorems of Jacobson \cite{J} and Herstein \cite{H} in light of these results and prove general commutativity theorems and in \S\ref{sec:multilinear} we characterize the homogeneous multilinear polynomial identities with the property that whenever a ring satisfies this identity it is necessarily commutative and prove a more general version of Theorem \ref{thm:multi} (see Theorem \ref{thm:multi2}). Throughout this paper, we will take a \emph{noncommutative ring} to be a ring that is \emph{not} commutative and all rings considered are assumed to be associative and possessing an identity element. We refer the reader to \cite{BR}, \cite{D}, and \cite{R1} for background on polynomial identities.  Finally, we will often say that a polynomial $P\in \mathbb{Z}\{X_1,\ldots ,X_s\}$ is either an \emph{identity} or a \emph{non-identity} for a ring $R$.  By this, we simply mean that $P(r_1,\ldots ,r_s)=0$ for all $r_1,\ldots, r_s\in R$ when speaking of $P$ being an identity for $R$; and when speaking of $P$ being a non-identity for $R$ we mean that there exist $r_1,\ldots ,r_s\in R$ such that $P(r_1,\ldots ,r_s)\neq 0$.  

\section*{Acknowledgments}
We are grateful to Lance W. Small and to Louis H. Rowen for many useful comments.  The first-named author expresses his thanks to Luna Xin, who asked a question that led to Theorem 4.3. In addition, we are grateful to the anonymous referee, who made numerous helpful comments, including suggesting the addition of Proposition \ref{prop:gen}.
\section{Finite rings and finite sets of identities}
\label{Rowen}

In this section, we show that Question \ref{A} has an affirmative answer. We once again point out that our work relies heavily on the aforementioned work of  Belov-Kanel, Rowen, and Vishne \cite{BRV3}. We begin with a classical fact from commutative algebra, which, if one borrows terminology from group theory, says that finitely generated $\mathbb{Z}$-algebras are \emph{residually finite} (see \cite{Vara}).  (We note that although the paper \cite{CL} predates the reference \cite{Vara}, the paper of Chew and Lawn \cite{CL} deals with what are now called \emph{just infinite ring} and not residually finite rings in the sense given here.)
\begin{lemma} Let $C$ be a finitely generated commutative $\mathbb{Z}$-algebra. If $x\in C$ is nonzero, then there is some ideal $L$ such that $x\not\in L$ and such that $C/L$ is finite.
\label{lem:Krull}
\end{lemma}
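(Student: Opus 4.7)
The plan is to reduce the problem to the classical fact that every maximal ideal of a finitely generated commutative $\mathbb{Z}$-algebra has a \emph{finite} residue field, and then localize at $x$ to make sure the constructed ideal misses $x$.

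First I would form the localization $C[1/x] = C[t]/(tx - 1)$. Since $x$ is nonzero and we may assume $C$ has no concerns beyond being a commutative ring with $1$, the localization is itself a nonzero finitely generated commutative $\mathbb{Z}$-algebra (adjoin one extra generator $t$ and one relation). The canonical map $\varphi\colon C \to C[1/x]$ sends $x$ to a unit, so $\varphi(x)$ lies outside every maximal ideal of $C[1/x]$.

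Next, pick any maximal ideal $\mathfrak{m}$ of $C[1/x]$ (one exists since $C[1/x] \neq 0$) and set $L := \varphi^{-1}(\mathfrak{m})$. Then $C/L$ embeds in $C[1/x]/\mathfrak{m}$, and $x \notin L$ by the observation above. So it remains only to argue that $C/L$ is finite. For this, the key point is that $C[1/x]/\mathfrak{m}$ is finite, after which $C/L$, being a subring of a finite ring, is also finite.

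The main obstacle is therefore the statement that if $R$ is a finitely generated commutative $\mathbb{Z}$-algebra and $\mathfrak{m} \subset R$ is a maximal ideal, then $k := R/\mathfrak{m}$ is a finite field. I would prove this via Zariski's lemma together with a characteristic argument. The field $k$ is a finitely generated $\mathbb{Z}$-algebra. If $\mathrm{char}(k) = p > 0$ then $k$ is a finitely generated $\mathbb{F}_p$-algebra which is a field, so by Zariski's lemma it is a finite extension of $\mathbb{F}_p$, hence finite. If $\mathrm{char}(k) = 0$ then $\mathbb{Q} \subseteq k$ and $k$ is a finitely generated $\mathbb{Q}$-algebra, so by Zariski's lemma $k$ is a number field; writing $k = \mathbb{Z}[a_1,\ldots,a_n]$, one clears denominators to see that this would force $\mathbb{Q}$ itself to be a finitely generated $\mathbb{Z}$-algebra, which is impossible because no finite set of rationals suffices to invert all primes. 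This contradiction rules out characteristic zero and completes the plan.
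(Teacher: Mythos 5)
Your argument has a genuine gap at the very first step: the claim that $C[1/x]$ is a nonzero ring because $x$ is nonzero is false. One has $C[1/x]=0$ precisely when $x$ is nilpotent, and a finitely generated $\mathbb{Z}$-algebra can certainly contain nonzero nilpotents; e.g.\ $C=\mathbb{Z}[t]/(t^2)$ and $x=t$ gives $C[1/x]=0$, so there is no maximal ideal $\mathfrak{m}$ to pull back. The lemma is still true in that example (take $L=2C$), but your construction produces nothing, so the proof does not cover all nonzero $x$. Everything after that first step is fine: for non\-nilpotent $x$, pulling back a maximal ideal of $C[1/x]$ does give a prime $L$ with $x\notin L$ and $C/L$ embedding in the finite field $C[1/x]/\mathfrak{m}$, and your Zariski-lemma argument that residue fields of finitely generated $\mathbb{Z}$-algebras are finite is correct (it is the same "arithmetic Nullstellensatz" the paper invokes).

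The paper's proof is arranged precisely to avoid this issue. Instead of inverting $x$, it takes a maximal ideal $Q$ containing the annihilator of $x$, so that $x$ survives in the localization $C_Q$ even when $x$ is a zero-divisor or nilpotent; the Krull intersection theorem then gives $x\notin Q^nC_Q$ for some $n$, hence $x\notin Q^n$, and finiteness of $C/Q$ (Nullstellensatz) together with Noetherianity makes $C/Q^n$ finite, so $L=Q^n$ works. To repair your proof you would need to either restrict to non-nilpotent $x$ and treat nilpotents separately, or replace the localization at $x$ by localization at a maximal ideal containing $\operatorname{Ann}(x)$ as the paper does; note also that in the nilpotent case the ideal $L$ you seek cannot be prime, so some power of a maximal ideal (or a primary ideal) is genuinely needed there.
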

\begin{proof} Let $I$ be the annihilator of $x$.  Then since $x$ is nonzero, there is some maximal ideal $Q$ such that $I$ is contained in $Q$. In particular, $x$ has nonzero image in the localization $C_Q$. By the Krull intersection theorem (see, for instance, \cite[Corollary 5.4]{Eisenbud}), we deduce that $$\bigcap_{n\in \mathbb{N}} Q^n C_Q = (0)$$ and so there is some $n$ such that $x\not\in Q^n C_Q$ and thus $x\not\in Q^n$. By the Nullstellensatz \cite[Theorem 4.19, p. 132]{Eisenbud}, $C/Q$ is a finite field, and so every ideal in the chain
$$Q\supseteq Q^2\supseteq Q^3 \supseteq \cdots $$ is cofinite and so letting $L=Q^n$ gives the result.
\end{proof}
\begin{thm} Let $\mathcal{S}$ and $\mathcal{T}$ be sets of polynomial identities with $\mathcal{T}$ finite. If there exists a ring $R$ such that all elements of $\mathcal{S}$ are identities for $R$ and all elements of $\mathcal{T}$ are non-identities for $R$, then there exists a finite ring $S$ such that all elements of $\mathcal{S}$ are identities for $S$ and all elements of $\mathcal{T}$ are non-identities for $S$. \label{lem:p}
\end{thm}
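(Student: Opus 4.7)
The plan is a three-step argument: reduce to a finitely generated subring $R_0$ of $R$; apply the affine representability results of Belov-Kanel, Rowen, and Vishne \cite{BRV3} to produce a ring homomorphism from $R_0$ into $M_n(C)$ for some finitely generated commutative Noetherian $\mathbb{Z}$-algebra $C$; and then use Lemma \ref{lem:Krull} to descend to a finite quotient of $C$ that still separates the finitely many witnesses to the non-identities in $\mathcal{T}$.

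The reduction is straightforward. For each $T_i \in \mathcal{T}$, choose a tuple $(r_{i,1}, \ldots, r_{i,n_i})$ in $R$ with $w_i := T_i(r_{i,1}, \ldots, r_{i,n_i}) \neq 0$, and let $R_0$ be the unital $\mathbb{Z}$-subalgebra of $R$ generated by all the $r_{i,j}$. Then $R_0$ is a finitely generated $\mathbb{Z}$-algebra that still satisfies every element of $\mathcal{S}$ (since identities descend to subrings) and has each $T_i$ as a non-identity via $w_i$. Crucially, only the finite list $w_1, \ldots, w_m \in R_0$ needs to survive in the eventual finite quotient.

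The central step is to invoke the BRV affine representability machinery to obtain a ring homomorphism $\iota\colon R_0 \to M_n(C)$, for some $n\ge 1$ and some finitely generated commutative Noetherian $\mathbb{Z}$-algebra $C$, with $\iota(w_i)\neq 0$ for each $i$. Granting this, each $\iota(w_i)$ has at least one nonzero entry in $C$, giving a finite set $c_1, \ldots, c_N \in C \setminus \{0\}$; by Lemma \ref{lem:Krull} each $c_l$ is excluded by some cofinite ideal $L_l \ngl C$, and the intersection $L := L_1 \cap \cdots \cap L_N$ is still cofinite in $C$ since $C/L$ embeds into $\prod_l C/L_l$. The composition $R_0 \xrightarrow{\iota} M_n(C) \to M_n(C/L)$ then lands in the finite ring $M_n(C/L)$ and sends each $w_i$ to a nonzero element, so its image $S$ is a finite ring satisfying $\mathcal{S}$ (as a homomorphic image of $R_0$) with each $T_i$ as a non-identity.

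The principal obstacle is the representability step. When $R_0$ is assumed to satisfy a polynomial identity, the existence of $\iota$ is exactly the standard conclusion of the representability theorem. The real work is to handle the case when no PI on $R_0$ is available a priori; this is where the deeper machinery of \cite{BRV3} is used, together with the freedom---granted by the \emph{finiteness} of $\mathcal{T}$---to replace $R_0$ first by a PI-quotient in which each of the finitely many $w_i$ remains nonzero, for instance by imposing an auxiliary standard polynomial identity of sufficiently large degree chosen so as to avoid the $w_i$. Once we are in the PI setting, representability applies and the Lemma \ref{lem:Krull} finish goes through verbatim.
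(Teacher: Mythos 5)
The central step of your argument has a genuine gap. The representability theorem of Belov-Kanel, Rowen, and Vishne that the paper invokes applies to \emph{relatively free} affine algebras, i.e.\ to quotients $\mathbb{Z}\{X_1,\ldots,X_s\}/I$ with $I$ a $T$-ideal; it does not apply to an arbitrary finitely generated ring $R_0$ satisfying the identities in $\mathcal{S}$. Affine PI rings need not embed in $M_n(C)$ for any commutative $C$ (this failure is classical and is exactly why the representability theorems are formulated for relatively free algebras), and more to the point there is no reason a homomorphism $\iota\colon R_0\to M_n(C)$ with $\iota(w_i)\neq 0$ should exist at all. Your proposed repair---quotienting $R_0$ by the evaluations of a standard identity of large degree ``chosen so as to avoid the $w_i$''---does not work either: if, say, $R_0$ is simple and non-PI, the ideal generated by the evaluations of $S_{2N}$ is all of $R_0$ for every $N$, so every witness dies; and even when the witnesses do survive, the resulting quotient is merely an affine PI ring, still not relatively free, so representability still does not apply.

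The correct move, and the one the paper makes, is to abandon $R_0$ entirely and work with the relatively free algebra $A=\mathbb{Z}\{X_1,\ldots,X_s\}/I$, where $I$ is the $T$-ideal generated by $\mathcal{S}$ and $s$ is large enough to accommodate the variables of the non-identities. Since $I$ is a $T$-ideal, $A$ satisfies every identity in $\mathcal{S}$; and the elements $T_i(X_{\cdot})$ are nonzero in $A$ because the substitution $X_{j}\mapsto r_{i,j}$ defines a homomorphism $A\to R$ (it kills $I$ precisely because $R$ satisfies $\mathcal{S}$) sending $T_i(X_{\cdot})$ to $w_i\neq 0$. Representability then legitimately applies to $A$, and your finishing steps---extracting nonzero entries of the images of the $T_i(X_{\cdot})$ in $M_n(C)$, applying Lemma \ref{lem:Krull} to each, and intersecting the resulting cofinite ideals (the paper instead reduces to $|\mathcal{T}|=1$ and takes a direct product, which is equivalent)---go through as you describe.
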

\begin{proof}
We observe that it suffices to prove the case when $|\mathcal{T}|=1$, since if this holds, then for each non-identity $G=0$ in $\mathcal{T}$ there is a finite ring $A$ for which each element of $\mathcal{S}$ is an identity and for which $G=0$ is a non-identity.  Then the direct product of these finite rings we produce is a finite ring with the desired properties.  Thus we assume henceforth that $\mathcal{T}$ consists of a single identity $G(X_1,\ldots ,X_s)=0$.

By assumption there is some $\mathbb{Z}$-algebra $R$ for which all elements of $\mathcal{S}$ are identities and such that $G=0$ is a non-identity.  Then there is some $s$-generated subalgebra $R_0$ of $R$ that witnesses the fact that $G(X_1,\ldots ,X_s)=0$ is a non-identity.
We let $I$ be the $T$-ideal in $S:=\mathbb{Z}\{X_1,\ldots, X_s\}$ generated by the identities from $\mathcal{S}$.  Then since $\mathbb{Z}$ is noetherian, a result of Belov-Kanel, Rowen, and Vishne \cite[\S7.2]{BRV3}, the algebra $A:=\mathbb{Z}\{X_1,\ldots, X_s\}/I$ is representable and hence there is a commutative $\mathbb{Z}$-algebra $C$ such that $A$ embeds as a subalgebra of the full $n\times n$ matrix ring $M_n(C)$ for some $n\ge 1$. We may replace $A$ by its image in $M_n(C)$, and since this image is isomorphic to $A$, it satisfies all identities in $\mathcal{S}$.  In addition, we may replace $C$ by the finitely generated subalgebra generated by the entries of a finite set of generators for $A$, since all we require is that the map $A\to M_n(C)$ be an embedding.  We therefore assume that $C$ is finitely generated and hence noetherian. Since $I$ is a $T$-ideal and since $G=0$ is a non-identity for $A$, the image of $G(X_1,\ldots ,X_s)$ in $M_n(C)$ is nonzero, and thus there is some nonzero $x\in C$ such that $x$ is an entry of the image of $G(X_1,\ldots ,X_e)$ in $M_n(C)$. By Lemma \ref{lem:Krull} there is a cofinite ideal $L$ of $C$ such that the image of $x$ is nonzero in $C/L$. In particular, we have that the images of $G(X_1,\ldots ,X_s)$ in $M_n(C/L)$ under the composition of maps
$$S\to S/I \to M_n(C)\to M_n(C/L)$$ is nonzero, and so the image of $S/I$ in $M_n(C/L)$ is a finite ring which by construction satisfies every identity from the set $\mathcal{S}$ and does not satisfy the identity $G=0$.
\end{proof}

}
\begin{remark}
It is interesting to compare Theorem \ref{lem:p} with other algebraic objects.  We note that work by Kle\u\i{man} \cite{Kleiman} on groups and Murski\u\i ~\cite{Murskii} on semigroups shows that these situations are very different and that one does not expect analogues of our results to hold in these settings.
\end{remark}

As mentioned earlier, the most important special case of Question \ref{A} has historically been the case of studying polynomial identity rings for which $[X,Y]=0$ is a non-identity (see, for example, \cite[Chapt. 6]{Her} and references therein). In this case, we can give a more direct proof.

\begin{thm} Let $\mathcal{S}$ be a set of polynomial identities.  If there is a noncommutative ring for which the elements of $\mathcal{S}$ are identities, then there exists a finite noncommutative ring with this property.
\label{thm:Specht}
\end{thm}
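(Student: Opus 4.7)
The plan is to reduce the problem to a two-generated noncommutative subring and then extract a finite noncommutative quotient via Zorn's lemma together with a direct structural argument, bypassing the Belov-Kanel, Rowen, and Vishne representability machinery invoked in the proof of Theorem \ref{lem:p}.

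To start, let $R$ be a noncommutative ring satisfying every identity in $\mathcal{S}$, and pick $a, b \in R$ with $c := [a, b] \neq 0$. The subring $R_0 := \mathbb{Z}\{a, b\} \subseteq R$ is a noncommutative 2-generated ring inheriting every identity in $\mathcal{S}$. Since homomorphic images of $R_0$ still satisfy $\mathcal{S}$, it suffices to produce a finite noncommutative quotient of $R_0$.

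Next, by Zorn's lemma choose a two-sided ideal $I \subseteq R_0$ maximal subject to $c \notin I$, and set $S := R_0/I$. Then $S$ is noncommutative and subdirectly irreducible, as every nonzero ideal of $S$ contains the image $\bar{c}$ of $c$; in particular $S$ has a monolith $M$ containing $\bar{c}$. A routine analysis of the additive order of $\bar{c}$ using maximality of $I$ pins down a unique prime $p$ with $pS = 0$: if $nS$ is nonzero then it contains $\bar{c}$, so $\bar c = n\bar s$ for some $\bar s$, and splitting $n$ into prime factors shows that only one prime $p$ can genuinely occur. Hence $S$ is a 2-generated $\mathbb{F}_p$-algebra.

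The main obstacle is to show that $S$ is actually finite. For this, consider the two-sided annihilator $A := \mathrm{Ann}_S(M)$: either $A = 0$, in which case $S$ acts faithfully on the simple bimodule $M$ and embeds into the product of endomorphism rings of $M$ as a left and right module, or $A \supseteq M$ (since $M$ is the unique minimal ideal), in which case $M \cdot M = 0$ and $M$ becomes a simple bimodule over the commutative finitely generated $\mathbb{F}_p$-algebra $S/A$. In either branch, the 2-generation of $S$ forces $M$ to be finite-dimensional over $\mathbb{F}_p$, and an application of Lemma \ref{lem:Krull} to the commutative factor $S/A$ yields a finite quotient in which the image of $\bar{c}$ survives. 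This structural analysis is essentially the content of the trichotomy in Theorem \ref{thm:trichotomy}, and pinning $S$ down as one of those three types of finite rings completes the proof.
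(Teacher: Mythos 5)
Your setup (pass to the two-generated subring, then use Zorn's lemma to get a subdirectly irreducible quotient $S$ in which every nonzero ideal contains $\bar c=[a,b]$) is exactly how the paper begins, but the rest of the argument has genuine gaps precisely where the real work lies. First, the claim that $pS=0$ for a single prime $p$ is not justified: from ``$nS\neq 0$ implies $\bar c\in nS$'' one cannot conclude that $S$ has prime (or even positive) characteristic. A priori $S$ could have characteristic $0$ or $p^k$ with $k>1$; indeed even in the proof of Theorem \ref{thm:trichotomy} the paper only extracts ``$p^mR=(0)$'', and only \emph{after} finiteness is already known. Second, and more seriously, the sentence ``the 2-generation of $S$ forces $M$ to be finite-dimensional'' asserts without proof exactly the crux of the theorem. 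In the paper, finiteness of the monolith $L$ is obtained by viewing $L$ as a simple module over $T=R\otimes_{\mathbb Z}R^{\mathrm{op}}$, invoking Regev's theorem to see $T$ is PI, then Kaplansky's theorem, the Artin--Tate lemma, and the Nullstellensatz to show $T/\mathrm{Ann}(L)$ is finite. Your argument never uses the hypothesis that $S$ satisfies a polynomial identity, which is indispensable here (two-generation alone does not bound anything: a subdirectly irreducible quotient of $\mathbb{Z}\{x,y\}$ with no identities imposed can certainly have infinite monolith).

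There are two further problems. Applying Lemma \ref{lem:Krull} to the commutative quotient $S/A$ cannot produce a finite quotient ``in which the image of $\bar c$ survives,'' because in the branch $A\supseteq M$ the element $\bar c$ lies in $M\subseteq A$ and is already zero in $S/A$; one needs a finite \emph{noncommutative} quotient of $S$ itself, and $S$ is not commutative, so Lemma \ref{lem:Krull} does not apply to it. Even granting that $M$ is finite, the paper still needs the entire second half of its proof (cofinite annihilators of $L$, noetherianity of $R$, the finitely many maps $f_r:\{u,v\}\to L$ showing $R$ is a finite module over its center, Artin--Tate again, and the integrality argument showing $Z(R)$ is a finitely generated $\mathbb{Z}$-module of positive characteristic) to conclude that $S$ is finite. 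Finally, the closing appeal to Theorem \ref{thm:trichotomy} is circular: its proof explicitly invokes Theorem \ref{thm:Specht} to reduce to the finite case.
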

\begin{proof}
If $\mathcal{S}$ is empty, we can take $R$ to be the ring of $2\times 2$ matrices over a finite field. Thus we may assume $\mathcal{S}$ is non-empty.
Let $I$ denote the $T$-ideal of $\mathbb{Z}\{x,y\}$ generated by the identities in $\mathcal{S}$. Then by assumption, the ring $R:=\mathbb{Z}\{x,y\}/I$ is not commutative. In particular, the image of $a:=[x,y]$ is nonzero in $R$. Now let $\mathcal{X}$ denote the collection of ideals $J$ of $R$ such that $R/J$ is not a commutative ring (i.e., the ideals that do not contain the image of $a$ in $R$). Then by Zorn's lemma, there exists a maximal element $J_0$ of $\mathcal{X}$.  Then we may replace $R$ by $R/J_0$ and then assume that $R/L$ is commutative whenever $L$ is a nonzero ideal of $R$.  By construction, $R$ is a $\mathbb{Z}$-algebra that is generated by two elements $u$ and $v$ that do not commute such that every nonzero ideal of $R$ contains $[u,v]$; moreover, $R$ satisfies the identities in $\mathcal{S}$. If $R$ is finite, then there is nothing to do.  Thus we may assume that $R$ is infinite. Let $L=R[u,v]R$. Then by construction $L$ is a minimal nonzero two-sided ideal of $R$ and hence it is a simple
$T:=R\otimes_{\mathbb{Z}} R^{\rm op}$-module. Since $\mathcal{S}$ is non-empty, $R$ satisfies a polynomial identity and so by a theorem of Regev \cite[Theorem 1, p. 152]{Regev}, $T$ does too. By minimality of $L$ as a nonzero two-sided ideal of $R$, the annihilator of $L$ as a left $T$-module is a primitive ideal $Q$ of $T$. Then by Kaplansky's theorem \cite[Theorem 6.1.25]{R2}, $T/Q\cong M_n(D)$, where $D$ is a division ring that is finite-dimensional over its center.  Thus $T/Q$ is a finite module over its center and since $T$ is a finitely generated $\mathbb{Z}$-algebra, the center of $T/Q$ is a finitely generated $\mathbb{Z}$-algebra by the Artin-Tate lemma (cf. \cite[\S6.2]{R2}).  In addition, the centre of $T/Q$ is a field and thus by the Nullstellensatz \cite[Theorem 4.19, p. 132]{Eisenbud}, the center of $T/Q$ is a finite field and so $T/Q$ is finite.  Notice that $L=T\cdot [x,y]$ and since $T/Q$ is finite and $Q$ annihilates $L$, we then see that $L$ is necessarily finite.

Since $L$ is finite, there are cofinite left and right ideals $I_1$ and $I_2$ of $R$ such that $I_1\cdot L=L\cdot I_2=(0)$.  These ideals then contain cofinite two-sided ideals and by taking the intersection of these ideals, we see that there is a two-sided ideal $I$ of $R$ such that $R/I$ is a finite ring and such that $IL=LI=(0)$.  Since $R/L$ is a homomorphic image of $\mathbb{Z}[u,v]$, we see that $R/L$ is noetherian and since $L$ is finite, $R$ is both left and right noetherian as well.

Since $R$ is a countable ring we can take an enumeration $r_1,r_2,r_3,\ldots $ of the elements of $R$. Notice that for each $r\in R$, the elements $[u,r]$ and $[v,r]$ lie in $L$.  Thus each element $r\in R$ gives us a map $f_r: \{u,v\}\to L$, given by $f_r(u)=[u,r]$ and $f_r(v)=[v,r]$. Then since $L$ is finite, there are only finitely many maps from $\{u,v\}$ to $L$, so we see that there is some natural number $N$ such that whenever $n> N$, there is some $i\le N$, depending on $n$, such that $f_{r_n}=f_{r_i}$. In particular, one sees that $r_n-r_i$ is central. It follows that $R$ is spanned by $r_1,\ldots ,r_N$ as a module over its center, $Z(R)$. By the Artin-Tate lemma (cf. \cite[\S6.2]{R2}), $Z(R)$ is finitely generated as a $\mathbb{Z}$-algebra and hence it is noetherian. Notice that if $z\in Z(R)$, then multiplication by $z$ induces a self-map of the finite ideal $L$ and hence there is some monic integer polynomial $P(z)$ that annihilates $L$, since $L$ is finite. We claim that $P(z)^n = 0$ in $R$ for some $n$.  To see this, suppose that this is not the case. Then, for each $n$, $P(z)^nR$ is a nonzero ideal of $R$ and by minimality of $L$ as a nonzero ideal, for each $n\ge 1$ there is some $x_n\in R$ such that  $P(z)^n x_n = [u,v]$.  Now let $I_n = \{x\in R\colon P(z)^n x\in L\}$. Then $I_1\subseteq I_2\subseteq I_3\subseteq \cdots $ is an ascending chain of ideals in $R$ and since $R$ is noetherian, there is some $\ell$ such that $I_{\ell}=I_{\ell+1}$.  In particular, $P(z)^{\ell} x_{\ell+1}\in L$. But since $P(z)$ annihilates $L$, this gives $$[u,v]=P(z)^{\ell+1} x_{\ell+1} =P(z)(P(z)^{\ell} x_{\ell+1})\in  P(z)L = (0),$$ a contradiction. Thus there is some $n$ such that $P(z)^n=0$. Since $P(z)^n$ is a monic polynomial with integer coefficients, every $z\in Z(R)$ is integral over the image of $\mathbb{Z}$ in $Z(R)$ and since $Z(R)$ is a finitely generated $\mathbb{Z}$-algebra, we see that $Z(R)$ is a finitely generated $\mathbb{Z}$-module. Since $L$ is finite, there is some positive integer $b$ such that $bL=(0)$. Therefore, the same argument as before shows that there is some $n$ such that $k:=b^n=0$ in $R$.  Thus $Z(R)$ is a finitely generated $\mathbb{Z}/k\mathbb{Z}$-module and hence it is finite. Since $R$ is a finitely generated $Z(R)$-module, $R$ must be finite too. The result follows.
\end{proof}
We now use Theorem \ref{lem:p} to give a coarse classification of the minimal finite noncommutative rings $R$ for which a collection $\mathcal{S}$ of polynomial identities must hold if there exists at least one noncommutative ring for which all identities in $\mathcal{S}$ hold. To do this, we introduce three classes of rings. For each prime $p$, we let
$U_p$ denote the ring of upper-triangular $2\times 2$ matrices with entries in $\mathbb{F}_p$; that is,
\begin{equation}
U_p = \left\{ \left( \begin{array}{cc} a & b \\ 0 & c \end{array}\right) \colon a,b,c\in \mathbb{F}_p\right\}.
\end{equation} Given a prime $p$, an integer $n\ge 2$, and $i\in \{1,\ldots ,n-1\}$, we let
$B_{p,n,i}$ denote the ring
\begin{equation} B_{p,n,i}:= \left\{ \left(\begin{array}{cc} x^{p^i}& y \\ 0 & x \end{array} \right) \colon x,y\in \mathbb{F}_{p^n}\right\}.
\end{equation}  Finally, given a prime $p$, we let $\mathcal{A}_p$ denote the collection of noncommutative rings that are a homomorphic image of a ring of the form
\begin{equation}
\mathbb{Z}\{x,y\}/(I+J_n)\end{equation} for some $n\ge 3$, where $I=(p,x,y)[x,y]\mathbb{Z}\{x,y\} +  \mathbb{Z}\{x,y\}[x,y] (p,x,y)$ and $J_n$ is the ideal $(x,y,p)^n$.
\begin{thm} Let $\mathcal{S}$ be a set of polynomial identities, and suppose there exists a noncommutative ring $R$ that satisfies every identity in $\mathcal{S}$.  Then one of the following must hold:
\begin{enumerate}
\item[(a)] there is a prime $p$ such that $U_p$ satisfies the identities in $\mathcal{S}$;
\item[(b)] there is a prime $p$ and $n\ge 2$ and $i\in \{1,\ldots ,n-1\}$ such that $B_{p,n,i}$ satisfies the identities in $\mathcal{S}$;
\item[(c)] there is a prime $p$ and a ring in $\mathcal{A}_p$ that satisfies the identities in $\mathcal{S}$.
\end{enumerate}
In particular, if there is a noncommutative ring that satisfies the identities of $\mathcal{S}$ then there is a finite noncommutative ring with nonzero nilpotent commutator ideal that satisfies the identities of $\mathcal{S}$.
\label{thm:trichotomy}
\end{thm}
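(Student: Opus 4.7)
My plan is to use Theorem~\ref{thm:Specht} to replace any hypothetical noncommutative ring satisfying~$\mathcal{S}$ by a \emph{finite} such ring, and then, among all finite noncommutative rings satisfying~$\mathcal{S}$, to pick one $R$ of minimum cardinality. By minimality, every proper unital subring and every proper homomorphic image of~$R$ must be commutative (otherwise it would be a strictly smaller finite noncommutative ring satisfying~$\mathcal{S}$, contradicting the choice of $R$). The whole argument then reduces to showing that any finite noncommutative ring with this ``minimal'' property is isomorphic to some $U_p$, to some $B_{p,n,i}$, or lies in $\mathcal{A}_p$.

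First I would collect several structural properties forced by minimality. A Chinese Remainder argument shows that the characteristic of~$R$ is a prime power $p^a$, since otherwise $R$ would split as a direct product of two proper (hence commutative) quotients. Choosing $u,v\in R$ with $[u,v]\ne 0$, the unital subring they generate is noncommutative, so by minimality must equal~$R$, i.e.\ $R$ is $2$-generated. Since every nonzero ideal $I$ gives a commutative quotient, the commutator ideal $C:=R[R,R]R$ is contained in every nonzero ideal, and is therefore the unique minimal nonzero ideal of~$R$. The Jacobson radical $J=J(R)$ is nilpotent; Nakayama applied to the minimal ideal~$C$ yields $JC=CJ=0$, making~$C$ a simple $(R/J,R/J)$-bimodule. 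Any matrix factor $M_n(\mathbb{F}_q)$ with $n\ge 2$ of~$R/J$ would either be a noncommutative proper quotient of~$R$ (impossible) or force $R=M_n(\mathbb{F}_q)$, in which case $U_p\subseteq R$ is a proper noncommutative subring for any $p\mid q$---already giving case~(a). Thus we may assume $R/J=\prod_{i=1}^k\mathbb{F}_{q_i}$.

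I would then split according to~$k$ and, when $k=1$, according to the size of the residue field. When $k\ge 2$, I would lift the primitive idempotents of~$R/J$ to orthogonal idempotents $e_1,\dots,e_k\in R$, decompose~$R$ into its Peirce components, and use the simplicity of~$C$ as a bimodule to show that $C$ is supported in a single off-diagonal piece $e_aRe_b$, an on-diagonal support being ruled out by shrinking~$R$ to a smaller unital noncommutative subring. A nonzero element of~$C$ together with~$e_a,e_b$ then generates a subring isomorphic to~$U_p$, which by minimality must equal~$R$; this is case~(a). When $k=1$ and $R/J=\mathbb{F}_{p^n}$ with $n\ge 2$, I would lift~$\mathbb{F}_{p^n}$ into~$R$ (possible for an Artinian local ring of $p$-power characteristic with perfect residue field) and observe that the simple $(\mathbb{F}_{p^n},\mathbb{F}_{p^n})$-bimodule~$C$ is parametrized by an element of $\mathrm{Gal}(\mathbb{F}_{p^n}/\mathbb{F}_p)$; identifying this Galois element with a Frobenius power $x\mapsto x^{p^i}$ and tracking it on the generators matches $R$ with $B_{p,n,i}$, giving case~(b). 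Finally, when $k=1$ and $R/J=\mathbb{F}_p$, the maximal ideal is $(p,u,v)$, and minimality forces $(p,u,v)[u,v]=[u,v](p,u,v)=0$ together with $(p,u,v)^n=0$ for some $n\ge 3$---any failure of one of these relations would give a noncommutative proper subring or quotient of~$R$. This exhibits~$R$ as a homomorphic image of $\mathbb{Z}\{x,y\}/(I+J_n)$, so $R\in\mathcal{A}_p$, giving case~(c). The ``in particular'' statement is then immediate, since in each of the three families the commutator ideal is contained in the nilpotent Jacobson radical.

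I expect the local cases (b) and (c) to be the main obstacles. In case~(b), the delicate point is pinning down the bimodule structure of~$C$: showing that the two $\mathbb{F}_{p^n}$-actions on~$C$ differ by exactly a power of Frobenius rather than by a more general ring map requires a careful Peirce analysis combined with the lifting of the residue field, after which the identification with~$B_{p,n,i}$ is routine. In case~(c), the work lies in converting each defining relation of $\mathcal{A}_p$ into a minimality argument; the requirement $n\ge 3$ is subtle, because $(p,u,v)^2=0$ together with the other relations would already force $[u,v]=0$, so one must rule this out by showing that the ring $\mathbb{Z}\{x,y\}/(I+J_2)$ is commutative.
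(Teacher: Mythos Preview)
Your overall architecture---pass to a finite ring via Theorem~\ref{thm:Specht}, take one of minimal cardinality so that every proper subring \emph{and} proper quotient is commutative, and then classify---is close to the paper's, though the paper only arranges that proper \emph{quotients} are commutative and organizes the case split differently (by whether $J(R)=0$, then by whether the left and right primitive annihilators of the minimal ideal agree, then by whether the minimal ideal is central). Your stronger minimality is a nice simplification in places, and your case split by the shape of $R/J$ is natural.

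There is, however, a genuine gap in your case~(b). Having $R/J\cong\mathbb{F}_{p^n}$ with $n\ge 2$ does \emph{not} by itself force the $(\mathbb{F}_{p^n},\mathbb{F}_{p^n})$-bimodule $C$ to carry a nontrivial Galois twist; a priori one could have $i=0$, i.e.\ $C$ central, and then your identification with $B_{p,n,i}$ fails (indeed $B_{p,n,0}$ is commutative). Relatedly, your parenthetical justification for lifting $\mathbb{F}_{p^n}$ is wrong: a finite local ring of characteristic $p^a$ with $a\ge 2$ need not contain a copy of its residue field (think of the Witt-type ring $\mathbb{Z}/p^2\mathbb{Z}[\zeta]$ with $\zeta$ a primitive $(p^n-1)$-th root of unity), so Wedderburn--Malcev only applies once you know $pR=0$. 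Both issues are handled by one extra argument: if $C$ were central then, using $JC=0$ so that $pC=0$, one checks $[z^k,x]=k[z,x]z^{k-1}$ and hence $[z^p,x]=0$ for all $z,x$; choosing $m$ with $u^{p^m}-u,\,v^{p^m}-v\in J$, the unital subring they generate is noncommutative (its commutator is $[u,v]$) with residue field $\mathbb{F}_p$, hence proper since $n\ge 2$, contradicting your minimality. Thus $C$ is non-central, and then the argument you already gave (or: $C\subseteq pR$ forces $[u,v]=pr$ and $[z,[u,v]]=p[z,r]\in pC=0$) shows the characteristic is $p$, after which your lifting and bimodule analysis go through. This is exactly the manoeuvre the paper uses to separate its $B_{p,n,i}$ and $\mathcal{A}_p$ subcases, so you should build it into your case~(b).
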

\begin{proof}
Pick $u$ and $v$ in $R$ that do not commute. Then we may replace $R$ by the $\mathbb{Z}$-subalgebra generated by $u$ and $v$ and assume $R$ is two-generated.
By Theorem~\ref{thm:Specht} we may also assume that $R$ is finite. By replacing $R$ by $R/I$, where $I$ is maximal with respect to not being commutative, we may further assume that $R$ has a minimal nonzero two-sided ideal $L$ generated by $[u,v]$.  Moreover, since every proper homomorphic image of $R$ is commutative and since $R$ is finite, there is some prime $p$ such that $p^m R=(0)$.
We now argue via cases.
\vskip 2mm
\emph{Case 1.} $R$ is semiprimitive, that is, $J(R)=(0)$.
\vskip 2mm In this case, by the Artin-Wedderburn theorem \cite[Chapter 14]{R3}, $R$ is isomorphic to a finite product of matrix rings over finite fields.  Since $R$ is not commutative, there is some factor $M_n(F)$ with $n>1$ and $F$ a field of characteristic $p$, which will satisfy every identity that $R$ does.  Since $U_p$ is isomorphic to a subring of $M_n(F)$, we see that $U_p$ satisfies all the identities that $R$ does.
\vskip 2mm
\emph{Case 2.} $J(R)\neq (0)$.
\vskip 2mm
Since $L$ is minimal among nonzero ideals of $R$, $L$ is contained in $J(R)$ and hence $(0)=J(R)L=L^2$, because the Jacobson radical of a finite ring is nilpotent.  Since $R$ has finitely many primitive ideals, which are pairwise comaximal, and since their product is contained in $J(R)$, we then see that there is a unique primitive ideal $P$ of $R$ such that $PL=(0)$; similarly, there is a unique primitive ideal $Q$ of $R$ such that $LQ=(0)$.
\vskip 2mm
\emph{Subcase 2.a.} $P\neq Q$.
\vskip 2mm
In this case, we can pick $y\in P$ such that $1-y\in Q$.  Then $[[u,v],y]=-y[u,v]+[u,v](1-(1-y)) = [u,v]$.  Let $z=[u,v]$.  Then since $z\in L$ and since $p\in P\cap Q$, we have $pz=z^2=yz=0$ and $zy=z$.  Therefore, the subring $S$ generated by $z$ and $y$ is not commutative and satisfies all the identities that $R$ does.   Notice that $S=S_0\oplus S_1$, where $S_0$ is the image of $\mathbb{Z}[y]$ in $S$ and $S_1=\{0,z,2z,\ldots, (p-1)z\}$.  Let $h(X)$ denote the minimal polynomial of $y$ in $S_0$.  Then since $z(y-1)=0$ and $yz=0$, we see that $h(X)\in (p,(X-1)X)\mathbb{Z}[X]$.  In particular, there is an ideal $I$ of $S_0$ such that $S_0/I\cong \mathbb{F}_p[X]/(X^2-X)$ with an isomorphism that sends $y+I$ to $X+(X^2-X)$, and since $IS_1= S_1 I=(0)$, we see that $J:=I\oplus (0)$ is an ideal of $S$ and so $S/J$ is isomorphic to the three-dimensional $\mathbb{F}_p$-algebra $B$ with generators $s,t$ and with relations
$$s^2=ts=t^2-t=s(1-t)=0.$$
We then have a map $\phi: U_p\to B$ via the map $e_{1,2}\mapsto s$, $e_{2,2}\mapsto t$. This map is easily checked to be an isomorphism. 
\vskip 2mm
\emph{Subcase 2.b.} $P=Q$ and $L$ is not contained in the center of $R$.
\vskip 2mm
In this case, we claim that $pR=(0)$. If not, then since $L$ is minimal among nonzero ideals, we have $pR\supseteq L$ and so $[u,v]=pr$ for some $r\in R$. But since $L$ is not central, we observe that there is some $z$ such that
$[z,[u,v]]\neq 0$. However, this is equal to $p[z,r]$ and $pL=(0)$, a contradiction. Thus $pR=(0)$ and so $R$ is an $\mathbb{F}_p$-algebra. Now we pick $y\in L$ that is not central.  Then $R/P$ is a finite field and hence isomorphic to $\mathbb{F}_{p^n}$ for some $n\ge 1$ and we let $q=p^n$.  We let $x\in R$ be such that $x+P$ is a generator for the multiplicative group of $R/P$. Then since the elements of $P$ annihilate $y$, we see that $[x,y]\neq 0$ since $y$ is non-central and $R$ is generated as an algebra by $x$ and $P$ by construction. Therefore $x^q-x\in P$ and so $$(0)=(x^q-x)Ry=yR(x^q-x).$$  We may replace $R$ by the noncommutative subalgebra generated by $x$ and $y$ if necessary and then $L$ is a simple $R/P$-$R/P$-bimodule generated by $y$ as a bimodule. Thus $L$ is isomorphic to a simple quotient of $R/P\otimes_{\mathbb{F}_p} R/P \cong \mathbb{F}_q^n$ and moreover we have $(x+P)\cdot y \neq y\cdot (x+P)$.   A simple quotient $M$ of $R/P\otimes_{\mathbb{F}_p} R/P$ is isomorphic $\mathbb{F}_q$ and satisfies $(x+P)\cdot v =v\cdot (x^{p^i}+P)$ for every $v\in M$ for some fixed $i\in \{0,1,2,\ldots ,n-1\}$. Since $L$ is not central, we then see that $L\cong \mathbb{F}_q$ and $(x+P)\cdot y =y\cdot (x^{p^i}+P)$ for some fixed $i\in \{1,2,\ldots ,n-1\}$.

Now fix an isomorphism $f:R/P\to \mathbb{F}_q$. Then we claim there is an endomorphism
$\Phi : R\to B_{p,n,i}$ defined on the generators $x$ and $y$ by
$$x\in R\mapsto \left( \begin{array}{cc} f(x^{p^i}+P) & 0\\ 0 & f(x+P)\end{array}\right)$$
and
$$y\in R\mapsto \left( \begin{array}{cc} 0 & 1\\ 0 & 0\end{array}\right).$$
To show that this is an endomorphism, we must show that if $P(X,Y)$ is an element of the free algebra $\mathbb{F}_p\{X,Y\}$ such that $P(x,y)=0$ in $R$ then $P(\Phi(x),\Phi(y))=0$.
One can check that $$\Phi(x)^q-\Phi(x)=\Phi(y)^2=\Phi(x)\Phi(y)-\Phi(y)\Phi(x^{p^i})=0.$$  Thus we may reduce $P(X,Y)$ modulo the ideal $(X^q-X,Y^2, XY-YX^{p^i})$ and we may assume without loss of generality that $P(X,Y)$ is of the form $A(X) + y B(X)$, where $A(X),B(X)\in \mathbb{F}_p[X]$ have degree at most $q-1$.  Now suppose that $A(x)+y B(x)=0$ in $R$ with $A(X), B(X)$ of degree $\le q-1$.  Then left-multiplying by $y$ gives that
$yA(x)=0$.  In particular, since the right annihilator of $y$ is a proper right ideal that contains $P$ and since $R/P$ is a field, we then see that $A(x)\in P$ and so $f(A(x)+P)=f(A(x)^{p^i}+P)=0$.  Thus $\Phi(A(x))=0$.  Thus we may assume that $P(X,Y)$ is of the form $yB(X)$ with $B(X)\in \mathbb{F}_p[X]$.  Then as before, since $B(X)$ annihilates $y$, $B(X)\in P$ and so $\Phi(y)B(\Phi(x))=0$.
Since $\Phi$ is surjective, $B_{p,n,i}$ satisfies all the identities that $R$ does.

\vskip 2mm
\emph{Subcase 2.c.} $P=Q$ and $L$ is central.
In this case, for $z,x\in R$ we have $[z^p,x]={\rm ad}_z^p(x) = {\rm ad}_z^{p-1}([z,x]) = 0$, since $[z,u]\in L$ and elements of $L$ are central.  Then since $R/J(R)$ is a finite product of fields of characteristic $p$, there exists some $m$ such that $u^{p^m}-u$ and $v^{p^m}-v\in J(R)$. Since $p$-th powers are central, we derive that $[u^{p^m}-u,v^{p^m}-v]=[u,v]\neq 0$, and so by considering the subring $S$ of $R$ generated by
$a:=u^{p^m}-u$ and $b:=v^{p^m}-v$, we see that $S/J(S)\cong \mathbb{F}_p$ and $S$ is noncommutative.  In particular, since $S$ is a finite ring, $J(S)^n=(0)$ for some $n\ge 1$ and so we have $(p,a,b)^n=(0)$ in $S$.  Since $S$ is noncommutative and $[a,b]\in J(S)^2$ is nonzero, we see that $n\ge 3$.  By replacing $S$ by a suitable homomorphic image, we may assume that $S$ is noncommutative but that $S/I$ is commutative for all nonzero ideals $I$ of $S$.  We next claim that
$J(S)[a,b]S=S[a,b]J(S)=(0)$.  We only prove $J(S)[a,b]S=(0)$, with the other direction handled in a similar manner. To see this, observe that if $I:=J(S)[a,b]S$ is nonzero then since $S/I$ is commutative, we have
$[a,b]\in J(S)[a,b]S$ and so $$[a,b] = \sum_{i=1}^m x_i [a,b] y_i$$ for some $m\ge 1$, $x_1,\ldots ,x_m\in J(S)$, and $y_1,\ldots ,y_m\in S$.  Now we let $j\ge 1$ denote the smallest positive integer such that
$J(S)^j [a,b]=(0)$.  Then there is some $\theta\in J(S)^{j-1}$ such that $\theta[a,b]\neq 0$.  But now
$$\theta [a,b] = \sum_{i=1}^m (\theta x_i) [a,b] y_i\subseteq J(S)^j [a,b]S=(0),$$ a contradiction.  It follows that $S$ is in the class $\mathcal{A}_p$, which completes the proof.

\end{proof}
\begin{remark}\label{rem:trichotomy}
We point out that the proof of Theorem \ref{thm:trichotomy} in fact shows the following: if $R$ is a finite ring that is not commutative then after a finite set of steps in which at each step we either replace $R$ by a subring or a homomorphic image, we will arrive at one of the finite rings appearing in the statement of Theorem \ref{thm:trichotomy}.  In particular, if $R$ is a \emph{minimal} finite noncommutative ring (that is, a ring with the property that every proper subring and every proper homomorphic image is commutative), then it must appear among one of the three classes of rings appearing in the statement of Theorem \ref{thm:trichotomy}.  In this sense, we consider this result as giving a coarse classification of minimal finite noncommutative rings. We point out, however, that not all the rings that appear in the statement of Theorem \ref{thm:trichotomy} are minimal and it is an interesting problem to give a precise classification of such rings, particularly for the class $\mathcal{A}_p$ with $p$ a prime.  In general, the question of whether a minimal noncommutative ring is necessarily finite appears to be difficult.  For example, one would need to rule out the existence of infinite simple rings with the property that each pair of noncommuting elements generates the entire ring.
\end{remark}
Notice that Theorem \ref{thm:trichotomy} gives a quick proof of the fixed-degree version of Jacobson's $X^n=X$ theorem \cite{J}: if for some $n\ge 2$ there is a noncommutative ring for which the identity $X^n=X$ holds, then the above result shows there is a finite noncommutative ring with a nonzero nilpotent commutator ideal for which the identity $X^n=X$ holds; but if $a$ is a nonzero element in this nilpotent ideal then $0=a(1-a^{n-1})$ and since $a$ is nilpotent, $1-a^{n-1}$ is a unit, so $a=0$, a contradiction.  The same argument applies to show that if $n\ge 2$ then the identity $[X,Y]^n=[X,Y]$ forces a ring to be commutative, which is a special case of a result due to Herstein \cite{HerX}.


\section{Decidability Procedures}\label{sec:dec}
\label{Algorithm}
In this section we prove Theorem \ref{thm:algorithm} by showing that the question of whether a finite set of identities forces a ring to be commutative is, in fact, decidable and we give an algorithm that always terminates after finitely many steps to make such a decision.  By Theorem \ref{thm:trichotomy} it suffices to check whether there exists a ring from one of the three classes of algebras given in the statement of the theorem for which the identities all simultaneously hold.  We now describe a decision procedure to deal with each of these three classes. Before giving this procedure, we first give some notation.
We let
\begin{equation}
\mathcal{C}_s\subseteq \mathbb{Z}\{X_1,\ldots ,X_s\}
\label{eq:C}
\end{equation}
denote the $\mathbb{Z}$-submodule generated by all monomials $X_1^{i_1}\cdots X_s^{i_s}$ with $i_1,\ldots ,i_s\ge 0$.
Notice that the canonical homomorphism
\begin{equation} \label{eq:Phi}
\Phi: \mathbb{Z}\{X_1,\ldots ,X_s\}\to \mathbb{Z}[X_1,\ldots ,X_s]
\end{equation} with $\Phi(X_i)=X_i$ has the property that the restriction of $\Phi$ to $\mathcal{C}_s$ gives a set bijection between $\mathcal{C}_s$ and the polynomial ring, and given an element $P\in \mathbb{Z}\{X_1,\ldots ,X_s\}$ there is a unique element $\bar{P}\in \mathcal{C}_s$ such that $\Phi(P-\bar{P})=0$.  This map $P\mapsto \bar{P}$ is a transversal of the projection map $\mathbb{Z}\{X_1,\ldots ,X_s\}\to \mathbb{Z}[X_1,\ldots ,X_s]$. In practice, given $P\in \mathbb{Z}\{X_1,\ldots ,X_s\}$, one can compute $\bar{P}$ by simply replacing each monomial that occurs in $P$ by the rearrangement of the letters that puts it in the form $X_1^{i_1}\cdots X_s^{i_s}$.

A key component of our algorithm involves dividing our analysis into two cases: the case when $\Phi(P_i)=0$ for all $i$; and the case when there is some $i$ such that $\Phi(P_i)\neq 0$.  In the latter case,
when there is some $i$ such that $\Phi({P}_i)$ is nonzero, we let $D$ denote the total degree of this nonzero polynomial. By a result of Alon \cite{Alon} there are integers $(n_1,\ldots ,n_s)\in \{0,1,\ldots ,D\}^s$ such that $N:=\Phi({P}_i)(n_1,\ldots ,n_s)$ is a nonzero integer. Then if $P_1,\ldots ,P_m$ are simultaneously identities for a ring $R$, we necessarily have $N=0$ in $R$.  It follows that if $R$ is a ring in one of the three classes given in the statement of Theorem \ref{thm:trichotomy} then $p\mid N$.  An analysis of the above argument shows that if $\bar{P}_1,\ldots ,\bar{P}_m$ are not identically zero then we have an algorithm for determining a finite (possibly empty) set of prime numbers $\{p_1,\ldots ,p_t\}$ such that if  $P_1,\ldots ,P_m$ are identities for a ring $R$ in one of the three classes given in the statement of Theorem \ref{thm:trichotomy} then the associated prime number $p$ must be in $p\in \{p_1,\ldots ,p_t\}$.  In fact, if the integer $|N|$ has prime factorization $p_1^{a_1}\cdots p_t^{a_t}$ then the characteristic of $R$ must be a divisor of one of the elements from $\{p_1^{a_1},\ldots ,p_t^{a_t}\}$.

For the remainder of this section, we assume that we are given $P_1,\ldots ,P_m \in \mathbb{Z}\{X_1,\ldots ,X_s\}$ and we will answer the question: ``\emph{Is there a noncommutative ring for which $P_1,\ldots ,P_m$ are all identities?}''  We do so by considering each of the three classes of rings in the statement of Theorem \ref{thm:trichotomy} separately.
\subsection{Decision procedures for $U_p$}
We begin with the simplest case, which is to decide whether there is a prime $p$ such that $P_1,\ldots ,P_m$ are all identities for the algebra $U_p$.

\begin{lemma} Let $P_1,\ldots, P_m\in \mathbb{Z}\{X_1,\ldots ,X_s\}$.  Then it is decidable whether or not there is some prime $p$ such that $P_1,\ldots ,P_m$ are identities for $U_p$.
\end{lemma}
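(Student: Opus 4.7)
The plan is to reduce the question to a terminating computation by ``generifying'' the matrix entries. I introduce $3s$ commuting indeterminates $a_1,b_1,c_1,\ldots,a_s,b_s,c_s$ and form the generic upper-triangular matrices
\[
M_i := \left(\begin{array}{cc} a_i & b_i \\ 0 & c_i \end{array}\right) \in M_2(R), \qquad R := \mathbb{Z}[a_1,b_1,c_1,\ldots,a_s,b_s,c_s].
\]
For each $k$ a formal matrix product lets me compute the three entries of $P_k(M_1,\ldots,M_s)$ as explicit polynomials in $R$ in finitely many steps. Since every $s$-tuple of elements of $U_p$ is obtained from these generic matrices by specializing the indeterminates to $\mathbb{F}_p$, $P_k$ is an identity for $U_p$ if and only if the three entry-polynomials, reduced modulo $p$, vanish as functions on $\mathbb{F}_p^{3s}$.

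Next I split on the size of $p$ relative to a computable degree bound. Let $D$ denote the maximum total degree occurring among these entries as $k$ ranges over $1,\ldots,m$. For each prime $p \le D$ I check directly: there are only finitely many such primes, and for each one the set $U_p^s$ has cardinality $p^{3s}$, so a brute-force evaluation of $P_1,\ldots,P_m$ on $U_p^s$ terminates. For primes $p > D$, every entry-polynomial has degree in each variable strictly less than $p$, and the standard fact that the vanishing ideal of $\mathbb{F}_p^n$ equals $(x_1^p - x_1,\ldots,x_n^p - x_n)$ implies that such a polynomial vanishes on all of $\mathbb{F}_p^{3s}$ if and only if it is the zero polynomial in $\mathbb{F}_p[a_1,\ldots,c_s]$, which is equivalent to all of its integer coefficients being divisible by $p$.

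Letting $G \in \mathbb{Z}$ denote the greatest common divisor of all integer coefficients appearing in any entry of any $P_k(M_1,\ldots,M_s)$, the dichotomy above shows that a prime $p > D$ works if and only if $p \mid G$. If $G = 0$ then the polynomials $P_k(M_1,\ldots,M_s)$ already vanish formally in $M_2(R)$, so $P_1,\ldots,P_m$ are identities for $U_p$ for every prime $p$; otherwise $G$ is a nonzero integer, and we factor $|G|$ and check for a prime divisor exceeding $D$. Combining the two regimes yields a procedure that terminates after finitely many steps and either returns a witness prime $p$ or reports that no such prime exists. The step requiring the most care in a full write-up is the separation into the $p \le D$ and $p > D$ regimes, because one must explicitly invoke the fact that for polynomials of degree less than $p$ the notions ``zero polynomial over $\mathbb{F}_p$'' and ``identically zero function on $\mathbb{F}_p^n$'' coincide, and this is precisely what fails for the small primes handled by brute force.
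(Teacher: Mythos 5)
Your argument is correct and terminates, but it reaches the finite set of candidate primes by a different mechanism than the paper. The paper evaluates each $P_i$ at all $s$-tuples drawn from the eight upper-triangular matrices in $M_2(\mathbb{Z})$ with $\{0,1\}$-entries: if every evaluation is the zero matrix, these tuples surject onto $U_2^s$, so the $P_i$ are identities for $U_2$ and the answer is yes with $p=2$; otherwise some evaluation is a nonzero integer matrix, and since those $\{0,1\}$-matrices reduce to elements of $U_p$ for every prime, any admissible $p$ must divide the gcd of its entries, which leaves a finite, computable list of candidates to brute-force. Your route instead passes to generic upper-triangular matrices over $\mathbb{Z}[a_1,\ldots,c_s]$, bounds the total degree $D$ of the entry-polynomials, and for $p>D$ converts ``identity for $U_p$'' into ``$p$ divides every integer coefficient,'' using that a polynomial of degree less than $p$ in each variable vanishing on all of $\mathbb{F}_p^{3s}$ must be the zero polynomial (the same kind of fact the paper isolates as Lemma \ref{lem:Alon} via Alon's Combinatorial Nullstellensatz). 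What your version buys is a clean characterization of \emph{all} sufficiently large admissible primes at once ($p\mid G$ and $p>D$), with brute force confined to $p\le D$; what the paper's version buys is elementarity---no degree threshold and no appeal to the structure of the vanishing ideal of $\mathbb{F}_p^n$, just finitely many integer matrix evaluations and a gcd. Both are complete decision procedures, and your handling of the degenerate case $G=0$ (all entries formally zero, so every prime works) is the right way to close that gap.
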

\begin{proof}
We let $\mathcal{X}$ denote the subset of $M_2(\mathbb{Z})$ consisting of the eight upper-triangular matrices with $\{0,1\}$-entries.  Then for each $s$-tuple $(A_1,\ldots ,A_s)\in \mathcal{X}^s$ we compute the matrices
$P_i(A_1,\ldots ,A_s)$ for $i=1,\ldots ,m$.  If $P_i(A_1,\ldots ,A_s)=0$ for every $i\in \{1,\ldots ,m\}$ and every
$(A_1,\ldots ,A_s)\in \mathcal{X}^s$ then since the $P_i$ are integer polynomials and since the image of $\mathcal{X}$ in $U_2$ is all of $U_2$, we have that $P_1,\ldots ,P_m$ are identities for $U_2$.  Alternatively, there is some $i$ and some $(A_1,\ldots ,A_s)\in \mathcal{X}^s$ such that $P_i(A_1,\ldots ,A_s)$ is a nonzero integer.  We compute the gcd, $d$, of the entries of this matrix.  If $d$ is equal to one then there cannot exist a prime $p$ such that $P_1,\ldots ,P_m$ are identities for $U_p$, and so we may assume that the gcd is strictly greater than one.  We then compute the primes $p_1,\ldots ,p_s$ that divide $d$.  Then if  $P_1,\ldots ,P_m$ are identities for $U_p$ for some prime $p$ then $p$ is necessarily in $\{p_1,\ldots ,p_s\}$.  Then for each $p\in \{p_1,\ldots ,p_s\}$, it can be checked whether or not $P_1,\ldots ,P_m$ are identities for $U_p$ by simply taking each $s$-tuple of elements from $U_p$ and verifying whether the evaluations of $P_1,\ldots ,P_m$ at this $s$-tuple are zero in $U_p$; if all possible evaluations are zero, then $P_1,\ldots ,P_m$ are identities for $U_p$. Since $U_p$ is a finite ring and there are finitely many primes $p$ to check, this process terminates and so we have a decision procedure to determine whether or not $P_1,\ldots ,P_m$ are identities for $U_p$ for some prime $p$.
\end{proof}
\subsection{Decision procedures for the algebras $B_{p,n,i}$}
We now show how one can check whether $P_1,\ldots ,P_m$ are identities for some algebra of the form $B_{p,n,i}$.  To proceed, we require a lemma.
\begin{lemma} Let $Q(X_1,\ldots ,X_s)\in \mathcal{C}_s$ be nonzero and let $q$ be a power of a prime $p$.  If $Q(X_1,\ldots ,X_s)$ is an identity for the finite field $\mathbb{F}_q$, then $\Phi(Q)\in (p,X_1^q-X_1,\ldots ,X_s^q-X_s)\mathbb{Z}[X_1,\ldots ,X_s]$ and if the reduction of $\Phi(Q)$ mod $p$ is not identically zero then the reduction has total degree at least $q$. 
\label{lem:Alon}
\end{lemma}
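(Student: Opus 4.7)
The plan is to reduce to the commutative setting and then combine an elementary polynomial division argument with Alon's Combinatorial Nullstellensatz. Since $Q \in \mathcal{C}_s$, the image $f := \Phi(Q) \in \mathbb{Z}[X_1,\ldots,X_s]$ is just $Q$ viewed as a commutative polynomial, and because $\mathbb{F}_q$ is commutative of characteristic $p$, the hypothesis that $Q$ is an identity for $\mathbb{F}_q$ is equivalent to saying that the mod-$p$ reduction $\bar{f} \in \mathbb{F}_p[X_1,\ldots,X_s]$ of $f$ vanishes at every point of $\mathbb{F}_q^s$. I would work with $\bar{f}$ throughout.

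For the ideal membership, I would perform iterated polynomial division inside $\mathbb{F}_p[X_1,\ldots,X_s]$ by the monic polynomials $X_1^q - X_1,\ldots,X_s^q - X_s$ to write
\[
\bar{f} \;=\; \sum_{i=1}^{s} h_i(X_1,\ldots,X_s)\,(X_i^q - X_i) \;+\; r(X_1,\ldots,X_s)
\]
with $h_i, r \in \mathbb{F}_p[X_1,\ldots,X_s]$ and $\deg_{X_i} r < q$ for each $i$. Since every $X_i^q - X_i$ vanishes on $\mathbb{F}_q$, the remainder $r$ again vanishes on all of $\mathbb{F}_q^s$. The $q^s$ monomials $X_1^{t_1}\cdots X_s^{t_s}$ with $0 \le t_i < q$ yield linearly independent functions $\mathbb{F}_q^s \to \mathbb{F}_q$ (they span the $q^s$-dimensional space of all such functions, so by a dimension count they form a basis), and are therefore in particular linearly independent over $\mathbb{F}_p$, forcing $r = 0$. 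Lifting the resulting identity $\bar{f} = \sum_i h_i (X_i^q - X_i)$ back to $\mathbb{Z}$ by choosing arbitrary integer representatives of the $h_i$ places $f$ into the ideal $(p, X_1^q - X_1, \ldots, X_s^q - X_s)\mathbb{Z}[X_1,\ldots,X_s]$, as desired.

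For the degree bound, I would argue by contradiction using Alon's Combinatorial Nullstellensatz---the very result of Alon invoked by the authors earlier in this section. If $\bar{f}$ is nonzero, pick any monomial $X_1^{t_1}\cdots X_s^{t_s}$ of total degree $d = \deg(\bar{f})$ that appears in $\bar{f}$ with nonzero coefficient. Were $d < q$, then $t_i \le d < q = |\mathbb{F}_q|$ for every $i$, and Alon's theorem (applied over $\mathbb{F}_q$, in which $\bar{f}$ also naturally lives) would produce an $s$-tuple in $\mathbb{F}_q^s$ at which $\bar{f}$ is nonzero, contradicting the identity hypothesis. Hence $d \ge q$.

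The only real subtlety in the argument is that the vanishing ideal of $\mathbb{F}_q^s$ naturally sits in $\mathbb{F}_q[X_1,\ldots,X_s]$, whereas the lemma demands membership in the smaller ring $\mathbb{F}_p[X_1,\ldots,X_s]$; this is handled cleanly because the generators $X_i^q - X_i$ are monic with coefficients already in $\mathbb{F}_p$, so the division algorithm never pushes us out of $\mathbb{F}_p$, and linear independence of the low-degree monomials as $\mathbb{F}_q$-valued functions is strong enough to force the remainder to vanish even when its coefficients are restricted to $\mathbb{F}_p$.
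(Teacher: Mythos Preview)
Your proof is correct and follows essentially the same approach as the paper: reduce to the commutative setting mod $p$, apply Alon's Combinatorial Nullstellensatz for the degree bound, and after reducing modulo the $X_i^q-X_i$ show that the remainder must vanish. The only cosmetic difference is that you kill the remainder via a dimension count on the function space $\mathbb{F}_q^{\,s}\to\mathbb{F}_q$, whereas the paper simply invokes Alon's theorem a second time.
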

\begin{proof} Since $\mathbb{F}_q$ is commutative, we may replace $Q$ by $\Phi(Q)$, its image in the polynomial ring $\mathbb{Z}[X_1,\ldots ,X_s]$, and assume it is a nonzero (commutative) polynomial.  If $Q$ is identically zero mod $p$ there is nothing to prove, so we assume that this is not the case and let $d$ denote the total degree of the reduction of $Q$ mod $p$. 
Then there exist $t_1,\ldots ,t_s$ with $$\sum t_i=d$$ such that the coefficient of $X_1^{t_1}\cdots X_s^{t_s}$ is nonzero.  It $t_i<q$ for $i=1,\ldots ,s$ then by Alon's combinatorial Nullstellensatz \cite[Theorem 1.2]{Alon} there exist 
$\alpha_1,\ldots ,\alpha_s\in \mathbb{F}_q$ such that $Q(\alpha_1,\ldots ,\alpha_s)\neq 0$ and so we see that the total degree of $Q$ must be at least $q$.  To complete the proof, we must show that $Q\in  (p,X_1^q-
X_1,\ldots ,X_s^q-X_s)$.  Since each $X_i^q-X_i$ is an identity for $\mathbb{F}_q$, we may reduce $Q$ modulo the ideal $$ (p,X_1^q-X_1,\ldots ,X_s^q-X_s)$$ and assume that it has degree at most $q-1$ in each variable $X_i$.  If 
$Q$ is nonzero then there exist  $t_1,\ldots ,t_s$ with $t_i<q$ for $i=1,\ldots ,s$ such that the coefficient of $X_1^{t_1}\cdots X_s^{t_s}$ is nonzero. But a second application of \cite[Theorem 1.2]{Alon} shows there exist $
\alpha_1,\ldots ,\alpha_s\in \mathbb{F}_q$ such that $Q(\alpha_1,\ldots ,\alpha_s)\neq 0$, a contradiction.
\end{proof}
The following lemma requires the use of Cartier operators.  We let $p$ be a prime number.
Then $\mathbb{F}_p[X_1,\ldots ,X_s]$ is a free $\mathbb{F}_p[X_1^p,\ldots ,X_s^p]$-module with basis 
$X_1^{j_1}\cdots X_s^{j_s}$ with $(j_1,\ldots ,j_s)\in \{0,\ldots ,p-1\}^s$, and every element of $\mathbb{F}_p[X_1^p,\ldots ,X_s^p]$ is the $p$-th power of some element of $\mathbb{F}_p[X_1,\ldots ,X_s]$.
In particular, for each $s$-tuple of integers $(j_1,\ldots ,j_s)\in \{0,\ldots ,p-1\}^s$, we define maps 
\begin{equation}
\Lambda_{j_1,\ldots ,j_s}:{{\mathbb F}}_p[X_1,\ldots ,X_s]\to {{\mathbb F}}_p[X_1,\ldots ,X_s],
\end{equation}
which are the operators uniquely defined by
\begin{equation}
P(X_1,\ldots ,X_s) = \sum_{j_1=0}^{p-1}\cdots \sum_{j_s=0}^{p-1} X_1^{j_1}\cdots X_s^{j_s} \Lambda_{j_1,\ldots ,j_s}(P(X_1,\ldots ,X_s))^{p},
\end{equation}
for $P(X_1,\ldots ,X_s)$ in ${{\mathbb F}}_p[X_1,\ldots ,X_s]$.
Observe that \begin{equation}
\label{eq:Cart}\Lambda_{j_1,\ldots ,j_s}(A+B^{p} C) = \Lambda_{j_1,\ldots ,j_s}(A) + B\Lambda_{j_1,\ldots ,j_s}(C)
\end{equation}
for $A,B,C$ in ${{\mathbb F}}_p[X_1,\ldots ,X_s]$.
In addition, if $P$ is a polynomial in $\mathbb{F}_p[X_1,\ldots ,X_s]$ then
$P$ is the zero polynomial if and only if $ \Lambda_{j_1,\ldots ,j_s}(P)=0$ for all $(j_1,\ldots ,j_s)\in \{0,\ldots ,p-1\}^s$.
For us, we shall be considering linear combinations of polynomials of the form $A^{p^k} B$.  In this case, if $B$ has degree strictly less than $p^k$ then if $\Omega$ is a $k$-fold composition of Cartier operators then
$\Omega(A^{p^k} B) = A\Omega(B)$ and moreover $\Omega(B)$ is a coefficient (possibly zero) of some monomial occurring in $B$.

\begin{lemma} Let $A_0,\ldots ,A_t$ and $B_0,\ldots ,B_t$ be polynomials in $\mathbb{Z}[X_1,\ldots ,X_s]$ with $A_0,\ldots ,A_t$ linearly independent over $\mathbb{Z}$ and let $\alpha_0,\ldots ,\alpha_t\in \{1,\ldots ,s\}$.  Then we can decide whether or not there exists a triple $(p,n,k)$, with $p$ a prime, $n\ge 2$ an integer, and $k\in \{1,\ldots, \lfloor n/2\rfloor\}$, such that $\sum_{i=0}^t (X_{\alpha_i}^{p^k}-X_{\alpha_i}) A_i^{p^k} B_i$ is in the ideal $(p, X_1^{p^n}-X,\ldots , X_s^{p^n}-X_s)\mathbb{Z}[X_1,\ldots, X_s]$.
\label{lem:At}
 \end{lemma}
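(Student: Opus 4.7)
The plan is to bound $p$, $n$, $k$ effectively in terms of the input data and then test finitely many cases. Writing
\[
E_k := \sum_{i=0}^t (X_{\alpha_i}^{p^k}-X_{\alpha_i}) A_i^{p^k} B_i
\]
and setting $D := 1+\max_i\deg A_i$, $d := \max_i \deg B_i$, we have $\deg E_k \le p^k D + d$. Reducing modulo $p$, the lemma's condition is equivalent to $\bar{E}_k$ lying in $(X_1^{p^n}-X_1,\ldots,X_s^{p^n}-X_s)\mathbb{F}_p[X_1,\ldots,X_s]$, i.e., $\bar{E}_k$ vanishing as a polynomial function on $\mathbb{F}_{p^n}^s$. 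I split on whether or not $\bar{E}_k$ is zero in $\mathbb{F}_p[X_1,\ldots,X_s]$.

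\emph{Case B:} $\bar{E}_k \neq 0$. By Lemma~\ref{lem:Alon} the total degree of $\bar{E}_k$ is at least $p^n$, so $p^n \le p^k D+d$. Combined with $n\ge 2k$, this yields $p^{2k}\le p^k D + d$ and hence $p^k\le D+d$. Thus $p$, $k$, and $n$ all lie in an explicit finite set; each triple is then tested by directly reducing $\bar{E}_k$ modulo $(X_1^{p^n}-X_1,\ldots,X_s^{p^n}-X_s)$.

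\emph{Case A:} $\bar{E}_k = 0$. Here any $n\ge 2k$ works, so it suffices to decide whether some $(p,k)$ gives $\bar{E}_k = 0$. Rewriting $E_k = \sum_i [(X_{\alpha_i}A_i)^{p^k}B_i - A_i^{p^k}(X_{\alpha_i}B_i)]$ and expanding $\bar{B}_i = \sum_{\mathbf{c}} X^{\mathbf{c}} U_{i,\mathbf{c}}^{p^k}$ and $X_{\alpha_i}\bar{B}_i = \sum_{\mathbf{c}} X^{\mathbf{c}} V_{i,\mathbf{c}}^{p^k}$ via the $k$-fold iterated Cartier decomposition (with $\mathbf{c}\in\{0,\ldots,p^k-1\}^s$), the freshman's dream in characteristic $p$ yields
\[
\bar{E}_k \;=\; \sum_{\mathbf{c}} X^{\mathbf{c}}\Bigl(\sum_i \bar{A}_i\bigl(X_{\alpha_i} U_{i,\mathbf{c}} - V_{i,\mathbf{c}}\bigr)\Bigr)^{p^k}.
\]
By uniqueness of the Cartier decomposition (the $X^{\mathbf{c}}$ with coords $<p^k$ form a basis of $\mathbb{F}_p[X_1,\ldots,X_s]$ over $\mathbb{F}_p[X_1^{p^k},\ldots,X_s^{p^k}]$) and the integrality of the polynomial ring, $\bar{E}_k=0$ if and only if $\sum_i \bar{A}_i(X_{\alpha_i}U_{i,\mathbf{c}} - V_{i,\mathbf{c}}) = 0$ for every $\mathbf{c}$.

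Once $p^k > 1+\max_i \deg B_i$, the components $U_{i,\mathbf{c}}$ and $V_{i,\mathbf{c}}$ become the ordinary $\mathbb{F}_p$-coefficients of $X^{\mathbf{c}}$ in $\bar{B}_i$ and $X_{\alpha_i}\bar{B}_i$, so the system stabilises and is $k$-independent. Letting $\gamma_{i,\mathbf{c}}, \gamma'_{i,\mathbf{c}}\in\mathbb{Z}$ denote those integer coefficients, the condition becomes that each of the finitely many integer polynomials
\[
R_{\mathbf{c}} := \sum_i A_i(X_{\alpha_i}\gamma_{i,\mathbf{c}} - \gamma'_{i,\mathbf{c}}) \in \mathbb{Z}[X_1,\ldots,X_s]
\]
reduces to zero modulo $p$. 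If every $R_{\mathbf{c}}$ is identically zero in $\mathbb{Z}[X_1,\ldots,X_s]$, return ``yes'' (any $p$ and sufficiently large $k$, $n$ work). Otherwise $p$ must divide the greatest common divisor $N$ of all nonzero coefficients appearing across the $R_{\mathbf{c}}$, restricting $p$ to finitely many explicit primes. The complementary small-$k$ regime $p^k \le 1+\max_i\deg B_i$ likewise puts $p$ and $k$ in a finite set and can be tested directly. The main obstacle is the Cartier-operator identity displayed above and the verification that, for $k$ large enough, the vanishing of $\bar{E}_k$ stabilises to the $k$-independent system $R_{\mathbf{c}}\equiv 0\pmod{p}$; once this is established, the required bounds on $p$, $n$, $k$ follow from elementary degree considerations.
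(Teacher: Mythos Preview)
Your proof is correct and follows essentially the same approach as the paper's. Both arguments reduce the problem, via the $k$-fold Cartier decomposition, to a $k$-independent finite system of congruences $R_{\mathbf{c}}\equiv 0\pmod p$ once $p^k$ exceeds an explicit threshold, extract from these congruences a single integer $N$ whose prime divisors are exactly the admissible $p$ in that regime, and then dispose of the finitely many remaining small-$(p,k)$ cases by direct computation using the degree bound from Lemma~\ref{lem:Alon}. The only difference is organizational: the paper partitions first on whether $p^k$ is large (specifically $p^k>\kappa+2$ with $\kappa=\max_i\deg A_i,\deg B_i$) and observes that in that regime membership in the ideal is equivalent to $\bar{E}_k=0$, whereas you partition first on whether $\bar{E}_k=0$ and then, within Case~A, separate large from small $p^k$. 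Your thresholds ($p^k>1+d$ for stabilisation, $p^k\le D+d$ in Case~B) are slightly sharper than the paper's uniform $\kappa+2$, but the content is the same.
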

\begin{proof}

Let $\kappa$ be the maximum of the degrees of $A_0,\ldots ,A_t, B_0,\ldots ,B_t$.
Suppose that $\sum_{i=0}^t  (X_{\alpha_i}^{p^k}-X_{\alpha_i})  A_i^{p^k} B_i$ is in $(p, X_1^{p^n}-X,\ldots , X_s^{p^n}-X_s)\mathbb{Z}[X_1,\ldots, X_s]$.
We first consider the case when $p^k > \kappa+2$. Therefore, after reducing mod $p$, we need to determine whether
$\sum_{i=0}^t  (X_{\alpha_i}^{p^k}-X_{\alpha_i})  A_i^{p^k} B_i\in (X_1^{p^n}-X,\ldots , X_s^{p^n}-X_s)\mathbb{F}_p[X_1,\ldots, X_s]$. Then since the total degree is at most
$\kappa p^k + \kappa + p^k < p^{2k}  \le p^n$, we see from Lemma \ref{lem:Alon} that this is the case if and only if
$\sum_{i=0}^t  (X_{\alpha_i}^{p^k}-X_{\alpha_i})  A_i^{p^k} B_i$ is identically zero, when regarded as a polynomial with coefficients in $\mathbb{F}_p$.

Then since $B_i X_{\alpha_i}$ has total degree strictly less than $p^k$, if $\Omega$ is a $k$-fold composition of Cartier operators then $\Omega(B_i)$ and $\Omega(B_i X_{\alpha_i})$ are the coefficients of some fixed monomials in $B_i$ and $B_i X_{\alpha_i}$ respectively.  Thus we see that $\sum_{i=0}^t  (X_{\alpha_i}^{p^k}-X_{\alpha_i})  A_i^{p^k} B_i$ is identically zero mod $p$ if and only if
$$\sum_{i=0}^t  X_{\alpha_i}A_i \Omega(B_i) - A_i \Omega(X_{\alpha_i}B_i)$$ is identically zero for every $k$-fold composition of Cartier operators when we work over $\mathbb{F}_p$.  Since the total degrees of $B_i$ and $B_iX_{\alpha_i}$ are less than $p^k$, we can obtain each coefficient by applying $k$-fold compositions of Cartier operators, and so if we let $\lambda_{i;j_1,\ldots ,j_s}$ denote the coefficient of $X_1^{j_1}\cdots X_s^{j_s}$ in $B_i$ then we see that this is equivalent to
$$\sum_{i=0}^t  X_{\alpha_i}A_i \lambda_{i;j_1,\ldots ,j_s} - A_i \lambda_{i;j_1-\delta_{1,\alpha_i},\ldots ,j_s-\delta_{s,\alpha_i}}$$ being identically zero mod $p$ for each $s$-tuple $(j_1,\ldots ,j_s)$ with $\sum j_i\le \kappa$, where $\delta_{i,j}$ is the Kronecker delta function.

Now for each such $s$-tuple we can compute the gcd of the coefficients of $$\sum_{i=0}^t  X_{\alpha_i}A_i \lambda_{i;j_1,\ldots ,j_s} - A_i \lambda_{i;j_1-\delta_{1,\alpha_i},\ldots ,j_s-\delta_{s,\alpha_i}}$$ and by taking the gcd over each of the gcds produced for each $s$-tuples we can compute a natural number $N$ with the property that for a prime $p$ and integers $k\ge 1$ and $n\ge 2k$ such that $p^k> \kappa+2$ and 
$\sum_{i=0}^t  (X_{\alpha_i}^{p^k}-X_{\alpha_i})  A_i^{p^k} B_i$ is in $$(p, X_1^{p^n}-X,\ldots , X_s^{p^n}-X_s)\mathbb{Z}[X_1,\ldots, X_s]$$ if and only if $p\mid N$.  In particular, if there is some prime $p$ that divides $N$ then there exists a triple $(p,n,k)$ with $k\le n/2$ and $k\ge 1$ such that $\sum_{i=0}^t (X_{\alpha_i}^{p^k}-X_{\alpha_i}) A_i^{p^k} B_i$ is in $(p, X_1^{p^n}-X,\ldots , X_s^{p^n}-X_s)\mathbb{Z}[X_1,\ldots, X_s]$.  If, on the other hand, $N=1$ then we know that if
$\sum_{i=0}^t (X_{\alpha_i}^{p^k}-X_{\alpha_i}) A_i^{p^k} B_i$  is in $(p, X_1^{p^n}-X,\ldots , X_s^{p^n}-X_s)\mathbb{Z}[X_1,\ldots, X_s]$ then $p^k \le \kappa+2$.  Since there are only finitely many pairs $(p,k)$ with $p$ prime and $k\ge 1$ such that $p^k \le \kappa+2$.  We have reduced our analysis to considering triples $(p,n,k)$ with $p^k \le \kappa+2$.
Notice that the total degree of $\sum_{i=0}^t  (X_{\alpha_i}^{p^k}-X_{\alpha_i})  A_i^{p^k} B_i$ is then at most
$\kappa^2+ 4\kappa + 2$ and so if $p^n >\kappa^2+ 4\kappa + 2$ then by Lemma \ref{lem:Alon}, $\sum_{i=0}^t (X_{\alpha_i}^{p^k}-X_{\alpha_i}) A_i^{p^k} B_i$ is in $(p, X_1^{p^n}-X,\ldots , X_s^{p^n}-X_s)\mathbb{Z}[X_1,\ldots, X_s]$ if and only if it is identically zero mod $p$; in particular, this condition is independent of $n$ in this case and we can check this for the finite set of $p$ with $p\le \kappa+2$.  Finally, if $p^k\le \kappa+2$ and $p^n \le \kappa^2+4\kappa+2$ then $(p,n,k)$ lies in a finite set and we can check these cases on a case-by-case basis via computation.
\end{proof}
We can now describe the algorithm for deciding whether $$P_1=\cdots =P_m=0$$ are identities for a noncommutative ring of the form $B_{p,n,\ell}$.  We let $J$ denote the commutator ideal of $\mathbb{Z}\{X_1,\ldots ,X_s\}$ generated by $[X_i,X_j]$ with $i\neq j$.
We argue via cases. 
\vskip 2mm
\emph{Case I.} $\bar{P}_1=\cdots =\bar{P}_m=0$.  

Then in this case we can compute
$P_1,\ldots ,P_m$ mod $J^2$ and each $Q\in \{P_1,\ldots ,P_m\}$ mod $J^2$ is of the form
$$Q:=\sum_{1\le i< j \le s} \sum_{k=1}^{m_{i,j}} A_{i,j,k} [X_i,X_j] C_{i,j,k},$$ where $A_{i,j,k},C_{i,j,k}\in \mathcal{C}_s$, and for each pair $(i,j)$ we have
$\{A_{i,j,k}\colon k\le m_{i,j}\}$ is linearly independent over $\mathbb{Z}$ and $\{C_{i,j,k}\colon k\le m_{i,j}\}$ is linearly independent over $\mathbb{Z}$; moreover, it is not difficult to compute these expressions mod $J^2$.  Since each element of $J^2$ is an identity for $B_{p,n,\ell}$, we see that each such $Q$ is an identity for $B_{p,n,\ell}$ if and only if each of $P_1,\ldots ,P_m$ are identities for $B_{p,n,\ell}$.

Now if $Q$ is an identity for a ring $B_{p,n,\ell}$ then since
$X [Y,Z] = [Y,Z] X^{p^{\ell}}$ is also an identity for $B_{p,n,\ell}$, we have
$$\sum_{1\le i< j \le s} \sum_{k=1}^{m_{i,j}}  [X_i,X_j] A_{i,j,k}^{p^{\ell}}C_{i,j,k}=0$$ is an identity for $B_{p,n,\ell}$ and since the square of the commutator ideal is zero, we may replace each $A_{i,j,k}^{p^{\ell}}C_{i,j,k}$ by their images in the commutative polynomial ring over $\mathbb{F}_p$.  Then we consider the commutative polynomial ring $\mathbb{F}_p[U_1,\ldots ,U_s,V_1,\ldots ,V_s]$.  Then specializing $X_i$ at the element
\[\left(\begin{array}{cc} U_i^{p^{\ell}} & V_i \\ 0 & U_i\end{array} \right), \] we see that an element of the form
$$\sum_{1\le i< j \le s} \sum_{k=1}^{m_{i,j}}  [X_i,X_j] A_{i,j,k}^{p^{\ell}}C_{i,j,k}=0$$ is an identity for $B_{p,n,\ell}$ if and only if
$$H:=\sum_{1\le i< j \le s} \sum_{k=1}^{m_{i,j}} (U_i^{p^{\ell}} V_j +U_j V_i -U_j^{p^{\ell}} V_i - U_i V_j) A_{i,j,k}^{p^{\ell}}(U_1,\ldots ,U_s)C_{i,j,k}(U_1,\ldots ,U_s) $$ is an identity for $\mathbb{F}_{p^n}$.  We let $$\kappa:=\max\{{\rm deg}(A_{i,j,k},C_{i,j,k})\colon i<j, 1\le k\le m_{i,j}\}.$$

We first consider the subcase when $\ell \le n/2$.  In this case, if $p^{n/2}/4> \kappa$ then
the total degree of $H$ is at most $$(p^{n/2}+1)\kappa + p^{n/2}\le p^n/2 + p^{n/2} < p^n,$$ since $n\ge 2$.
It then follows from Lemma \ref{lem:Alon} that the polynomial $H$ must be identically zero after we identify it with its image in $\mathbb{F}_p[U_1,\ldots ,U_s,V_1,\ldots ,V_s]$.  In particular, taking the coefficient of $V_j$, we see that
$$H_j:=\sum_{i\neq j} \sum_{k=1}^{m_{i,j}} (-1)^{\chi(i,j)}(U_i^{p^{\ell}}-U_i) A_{i,j,k}^{p^{\ell}}(U_1,\ldots ,U_s)C_{i,j,k}(U_1,\ldots ,U_s)$$ must be identically zero, when viewed as a commutative polynomial with coefficients in $\mathbb{F}_p$, where $\chi(i,j)$ is $1$ if $i>j$ and is $0$ otherwise.  Thus we have reduced the problem to deciding whether $H_1,\ldots ,H_s$ are zero mod $p$, and by Lemma \ref{lem:At} we can decide whether there exists a prime $p$ for which this occurs.  On the other hand, there are only finitely many triples $(p,n,\ell)$ with $p$ prime, $\ell \le n/2$ and $p^{n/2} < 4\kappa$ and we can check on a case-by-case basis whether $P_1,\ldots ,P_m$ are identities for the algebras $B_{p,n,\ell}$ by evaluating them at all $s$-tuples of elements in these algebras and checking whether the results are always zero.  

The second subcase is when we have a triple $(p,n,\ell)$ with
$\ell>n/2$.  Then applying the $\mathbb{F}_{p^n}$ field automorphism given by $x\mapsto x^{p^{n-\ell}}$ to our expression for $H$ and using the fact that $a^{p^n}=a$ in $\mathbb{F}_{p^n}$, we see this is the case if and only if
$$H':=\sum_{1\le i< j \le s} \sum_{k=1}^{m_{i,j}} (U_i V_j^{p^{n-\ell}} +U_j^{p^{n-\ell}} V_i^{p^{n-\ell}} -U_jV_i^{p^{n-\ell}} - U_i^{p^{n-\ell}} V_j^{p^{n-\ell}}) A_{i,j,k}C_{i,j,k}^{p^{n-\ell} }$$ is an identity for $\mathbb{F}_{p^n}$, where the $A_{i,j,k}$ and $C_{i,j,k}$ are polynomials in the variables $U_1,\ldots ,U_s$.  Moreover, since the $V_i$'s are indeterminates and since the map $x\mapsto x^{p^{n-\ell}}$ is bijective on $\mathbb{F}_{p^n}$, this is the case if and only if
 $$H'':=\sum_{1\le i< j \le s} \sum_{k=1}^{m_{i,j}} (U_i V_j +U_j^{p^{n-\ell}} V_i -U_jV_i - U_i^{p^{n-\ell}} V_j )A_{i,j,k}C_{i,j,k}^{p^{n-\ell} }$$
is an identity for $\mathbb{F}_{p^n}$.  Since $n-\ell\le n/2$, we can now handle this in a completely symmetric manner as we handled the first case when $\ell\le n/2$.  Thus this case is decidable.

\emph{Case II.} $\bar{P}_i$ is nonzero for some $i$.

 In this case, we compute the gcd $d_i$ of the 
coefficients of the monomials occurring in each $\bar{P}_i$ for $i=1,\ldots ,s$. By assumption at least one $d_i$ is 
nonzero and so if we let $\mathcal{S}$ denote the set of primes $p$ which divide $\gcd(d_1,\ldots ,d_t)$ then $\mathcal{S}$ is a finite set.  To check whether there is some prime $p\in \mathcal{S}$ and some integers $n$ and $\ell$ such that $P_1,\ldots ,P_m$ are identities for $B_{p,n,\ell}$, we simply use the procedure given in Case 1 above for these particular primes, since modulo $p$ we have $\bar{P}_1=\cdots =\bar{P}_s=0$.  Now for $p\not\in \mathcal{S}$ we have that there is some $Q\in\{ \bar{P}_1,\ldots ,\bar{P}_s\}$ that is not identically zero mod $p$.  Then if $P_1,\ldots ,P_m$ are identities for $B_{p,n,\ell}$ then $Q$ must be an identity for $B_{p,n,i}/([B_{p,n,i},B_{p,n,i}])\cong \mathbb{F}_{p^n}$. In particular, $Q\in (p, X_1^q-X,\ldots ,X_s^q-X)\subseteq \mathbb{Z}[X_1,\ldots ,X_s]$ and is identically zero mod $p$ if the total degree of $Q$ is strictly less than $p^n$ by Lemma \ref{lem:Alon}.  Thus if we let $D_{p,Q}$ be the total degree of $Q$ mod $p$, then $D_{p,Q}$ is equal to the total degree of $Q$ for all but a finite computable set of primes.  Then if $D_{p,Q}< p^n$, then $Q$ is not identically zero mod $p$ and so by Lemma \ref{lem:Alon}, $Q$ is not an identity for $\mathbb{F}_{p^n}$.  Thus we may consider the pairs $(p,n)$ with $D_{p,Q}\ge p^n$.  But there are only finitely many $n$ and $p$ not in $\mathcal{S}$ such that $D_{p,Q}\ge p^n$ and moreover it is easy to compute all such pairs $(p,n)$.  Once we have computed all eligible pairs $(p,n)$, they give rise to a finite number of eligible triples $(p,n,\ell)$ and we can again then test whether $P_1,\ldots ,P_m$ are identities for this finite set of allowable algebras $B_{p,n,\ell}$ via finitely many computations.

\subsection{Decision procedures for algebras in the class $\mathcal{A}_p$}
We let $J$ denote the commutator ideal of $\mathbb{Z}\{X_1,\ldots ,X_s\}$ generated by all commutators $[X_i,X_j]$ and we let
$I$ denote the sum of $J^2$ and the ideal generated by the elements $[[X_i,X_j],X_k]$.  Then all elements of $I$ are identities for algebras in $\mathcal{A}_p$ and so we first reduce $P_1,\ldots ,P_m$ mod $I$ and we may assume that we have
\begin{equation}\label{eq:Hk}
P_k=H_k + \sum_{i<j} A_{i,j,k} [X_i,X_j],\end{equation} where $H_k, A_{i,j,k}\in \mathcal{C}_s$, and where $\mathcal{C}_s$ is defined as in Equation (\ref{eq:C})

We now give an overview of the procedure we use to test whether there is a ring in the class $\mathcal{A}_p$ for which $P_1,\dots ,P_m$ are all simultaneously identities.

\vskip 2mm
\begin{enumerate}
\item[Step 1.] We first compute $\bar{P}_1,\ldots ,\bar{P}_s$.  If these are all zero, we go to Step 3; otherwise, we go to Step 2.
\item[Step 2.] Use the theory of Gr\"obner-Shirshov bases to decide whether there is an algebra in $\mathcal{A}_p$ for which $P_1,\ldots ,P_m$ are all identities and stop.
\item[Step 3.] Use Lemma \ref{lem:comm} and the procedure described afterwards to decide whether there is an algebra in $\mathcal{A}_p$ for which $P_1,\ldots ,P_m$ are all identities and stop.
\end{enumerate}

The easier case is the third step in the procedure above, which we describe now.  For the following result, we let $[S,S]$ denote the commutator ideal of a ring $S$ and we let $[[S,S],S]$ denote the ideal generated by all elements $[a,b]$ with $a\in [S,S]$ and $b\in S$.  
\begin{lemma} Let $p$ be a prime and suppose that $P_1,\ldots ,P_m\in S:= \mathbb{Z}\{X_1,\ldots ,X_s\}$ are polynomials with $\bar{P}_1=\cdots =\bar{P}_m=0$. Then there exists $R\in \mathcal{A}_p$ for which $P_1,\ldots ,P_m$ are identities for $R$ if and only if $$P_1,\ldots ,P_m \in pS + [S,S]^2+ [[S,S],S] + (X_1^p-X_1,\ldots ,X_s^p-X_s)[S,S].$$
\label{lem:comm}
\end{lemma}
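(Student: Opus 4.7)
\emph{Strategy.}
Write $I_0$ for the ideal $pS + [S,S]^2 + [[S,S],S] + (X_1^p-X_1,\ldots,X_s^p-X_s)[S,S]$. The plan is to show $I_0$ coincides, inside $[S,S]$, with the set of identities common to every noncommutative $R\in\mathcal A_p$. Both directions of the biconditional then follow, since $\mathcal A_p$ is non-empty: $R_0 := \mathbb F_p\{u,v\}/(u^2,v^2,uv)$ is a noncommutative homomorphic image of $\mathbb Z\{x,y\}/(I+J_3)$ and hence lies in $\mathcal A_p$.

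\emph{Forward direction $(\Leftarrow)$.}
The structural key is that every $R\in\mathcal A_p$ is local with maximal ideal $\mathfrak m$ the image of $(p,x,y)$ and residue field $\mathbb F_p$, and moreover $\mathfrak m$ annihilates the commutator ideal on both sides. This annihilation is immediate from the definition of $I$ in the presentation $T_n := \mathbb Z\{x,y\}/(I+J_n)$---the relations $(p,x,y)[x,y] = [x,y](p,x,y) = 0$ there---and passes to any quotient. As consequences one obtains (a) $p[R,R]=0$; (b) $[R,R]^2=0$; (c) $[[R,R],R]=0$; and (d) $(r^p-r)[R,R]=0$ for every $r\in R$ (for (d), write $r=\alpha+m$ with $\alpha\in\mathbb Z$ and $m\in\mathfrak m$, so $r^p-r\equiv \alpha^p-\alpha\pmod{\mathfrak m}$, and $\alpha^p-\alpha\in p\mathbb Z\subseteq\mathfrak m$ by Fermat). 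Properties (a)--(d) translate precisely into the assertion that $p[S,S]$, $[S,S]^2$, $[[S,S],S]$, and $(X_i^p-X_i)[S,S]$ all lie in $\mathrm{Id}(R)$. Because the hypothesis $\bar P_k=0$ forces $P_k\in[S,S]$, the $pS$ summand of $I_0$ operates effectively as $p[S,S]$, so the implication $P_k\in I_0 \Rightarrow P_k\in\mathrm{Id}(R)$ follows.

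\emph{Reverse direction $(\Rightarrow)$.}
Fix any noncommutative $R\in\mathcal A_p$, with generators $u,v$ (so $[u,v]\neq 0$ and, since $p[R,R]=0$, the $\mathbb Z$-annihilator of $[u,v]$ is exactly $p\mathbb Z$). Using $[S,S]^2, [[S,S],S]\subseteq I_0$, I first normalize any $P\in[S,S]$ modulo $I_0$ into the canonical form
\[
P \equiv \sum_{1\le i<j\le s} c_{i,j}(X_1,\ldots,X_s)\,[X_i,X_j]\pmod{I_0},
\]
with $c_{i,j}\in\mathbb Z[X_1,\ldots,X_s]$: modulo $[[S,S],S]$, each commutator $[X_i,X_j]$ slides past any factor, and modulo $[S,S]^2$ the $[S,S]$-error from rearranging the remaining word into commutative form is absorbed. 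Now evaluate at $X_k \mapsto r_k := \alpha_k + \beta_k u + \gamma_k v$ with integers $\alpha_k,\beta_k,\gamma_k$. Since $\beta_k u+\gamma_k v\in\mathfrak m$ and $\mathfrak m[R,R]=0$, expanding each $c_{i,j}(r_1,\ldots,r_s)$ via the binomial theorem collapses, upon multiplication by $[X_i,X_j]$, to its purely scalar term; together with the centrality of $[R,R]$ this yields
\[
P(r_1,\ldots,r_s) = \Bigl(\sum_{i<j} c_{i,j}(\alpha)\,(\beta_i\gamma_j-\gamma_i\beta_j)\Bigr)[u,v].
\]
Choosing $(\beta_{i_0},\gamma_{i_0})=(1,0)$, $(\beta_{j_0},\gamma_{j_0})=(0,1)$ for a fixed pair $i_0<j_0$ and all other $\beta_k=\gamma_k=0$ reduces this to $c_{i_0,j_0}(\alpha)\,[u,v]$. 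The identity hypothesis together with $\mathrm{Ann}_{\mathbb Z}([u,v])=p\mathbb Z$ forces $c_{i_0,j_0}(\alpha)\equiv 0\pmod p$ for every $\alpha\in\mathbb Z^s$, and the standard fact that a polynomial in $\mathbb F_p[X_1,\ldots,X_s]$ vanishing on all of $\mathbb F_p^s$ lies in $(X_1^p-X_1,\ldots,X_s^p-X_s)$ gives $c_{i_0,j_0}\in (p,X_1^p-X_1,\ldots,X_s^p-X_s)\,\mathbb Z[X_1,\ldots,X_s]$ for every pair. Assembling places $P$ in $I_0$.

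\emph{Main obstacle.}
The crux is the uniform validity of $\mathfrak m[R,R]=0$ across all of $\mathcal A_p$; this is precisely what kills the non-scalar contributions of a substitution once multiplied into a commutator, and it traps the entire $I_0$-obstruction inside a handful of scalar values $c_{i,j}(\alpha)\in\mathbb F_p$. The remainder is bookkeeping with the canonical form plus the $\mathbb F_p^s$-Nullstellensatz fact for polynomials vanishing on $\mathbb F_p^s$.
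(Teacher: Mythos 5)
Your proof is correct and follows essentially the same route as the paper's: reduce $P$ modulo $[S,S]^2+[[S,S],S]$ to the canonical form $\sum Q_{i,j}[X_i,X_j]$ with commutative coefficients, evaluate at substitutions of the form (integer) $+$ (radical element) to show each $Q_{i,j}$ vanishes identically on $\mathbb{F}_p$, and invoke Lemma \ref{lem:Alon} to place each $Q_{i,j}$ in $(p,X_1^p-X_1,\ldots ,X_s^p-X_s)$. The only cosmetic difference is that you verify the generating identities of the ideal hold for \emph{every} ring in $\mathcal{A}_p$ rather than just exhibiting one such ring for the easy direction, a fact the paper also uses implicitly in its converse argument.
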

\begin{proof}
Observe that every element of the ideal
$$pS + [S,S]^2+ [[S,S],S] + (X_1^p-X_1,\ldots ,X_s^p-X_s)[S,S]$$ is an identity for the noncommutative ring
$$\mathbb{F}_p\{X,Y\}/(X,Y)^3\in \mathcal{A}_p$$ and so it suffices to prove that the converse holds.

Since each $\bar{P}_i=0$ we have that $P_i$ is in the commutator ideal for $i=1,\ldots , m$.  Then since every element of $\mathbb{Z}\{X_1,\ldots ,X_s\}$ is congruent to an element of $\mathcal{C}_s$ modulo the commutator ideal, we see that, modulo $J:=[[S,S],S]+[S,S]^2$, we can write
$$P_k \equiv \sum_{1\le i<j\le s} Q_{i,j,k} [X_i,X_j]~(\bmod ~J)$$ with each $Q_{i,j,k}\in \mathcal{C}_s$.  Since every element of $J$ is an identity for every ring in $\mathcal{A}_p$, we may assume that $P_k$ is in fact equal to
$$ \sum_{1\le i<j\le s} Q_{i,j,k} [X_i,X_j]$$ for $k=1,\ldots ,m$.  Now if there exist $i,j,k$ and integers $n_1,\ldots ,n_s$ such that $c:=Q_{i,j,k}(n_1,\ldots ,n_s)$ is not a multiple of $p$, then for $R\in \mathcal{A}_p$ there exist $x,y\in J(R)$ with $[x,y]\neq 0$ and $J(R)[x,y]=[x,y]J(R)=(0)$.  Then if we evaluate
$P_k$ at $$(X_1,\ldots ,X_s)=(n_1,n_2,\ldots ,n_i+x,\ldots ,n_j+y,\ldots ,n_s),$$ we obtain
$c[x,y]$, since $x[x,y]=y[x,y]=0$ in $R$.  But this is a contradiction since $[x,y]\neq 0$ and $P_k$ is assumed to be an identity for $R$.  It follows that $Q_{i,j,k}(n_1,\ldots, n_s)$ is a multiple of $p$ for all $i,j,k$ and integers $n_1,\ldots ,n_s$.
In particular, $\Phi(Q_{i,j,k})$ is an identity for $\mathbb{F}_p$ and so by Lemma \ref{lem:Alon} it is in the ideal
$(p,X_1^p-X_1,\ldots ,X_s^p-X_s)$.  Since
$Q_{i,j,k}\equiv \bar{Q}_{i,j,k}~(\bmod ~[S,S])$, we see that
$$P_1,\ldots ,P_m\in pS + [S,S]^2+ [[S,S],S] + (X_1^p-X_1,\ldots ,X_s^p-X_s)[S,S],$$
as required.
\end{proof}

We now see how we can determine whether there is a prime number $p$ and an algebra $R$ in the class $\mathcal{A}_p$ for which $P_1,\ldots ,P_m$ are all identities when $\bar{P}_1=\cdots \bar{P}_m=0$.  We claim that this is the case if and only if there is a prime $p$ for which each $A_{i,j,k}$ is an identity for $\mathbb{F}_p$, where the $A_{i,j,k}$ are as in Equation (\ref{eq:Hk}).  To see this, observe that using Equation (\ref{eq:Hk}) and the assumption that $\bar{P}_k=0$ for all $k$, we have
$P_k= \sum_{i<j} A_{i,j,k} [X_i,X_j]$ where $A_{i,j,k}\in \mathcal{C}_s$.  Now suppose that there is some $A_{i,j,k}$ that is not an identity for $\mathbb{F}_p$.  Then there are integers $n_1,\ldots ,n_s$ such that $p\nmid A_{i,j,k}(n_1,\ldots ,n_s)$.
We now have that an algebra $R\in \mathcal{A}_p$ is generated by elements $x$ and $y$ in the Jacobson radical of $R$ and we set $a_{\ell}=n_{\ell}$ for $\ell\neq i,j$ and we set $a_i=n_i+x$ and $a_j=n_j+y$.
Then $P_k(a_1,\ldots ,a_s)=A_{i,j,k}(n_1,\ldots ,n_s)[x,y]\neq 0$ in $R$, since $R$ is noncommutative and of characteristic a power of $p$.  Thus $P_k$ cannot be an identity for an algebra in $\mathcal{A}_p$.  Conversely, if each $A_{i,j,k}$ is an identity for $\mathbb{F}_p$ then by Lemma \ref{lem:Alon}, each $\Phi(A_{i,j,k})\in (p,X_1^p-X_1,\ldots ,X_s^p-X_s)\mathbb{Z}[X_1,\ldots ,X_s]$.  In particular, by Lemma \ref{lem:comm} we see that there exists $R\in \mathcal{A}_p$ for which $P_1,\ldots ,P_m$ are identities for $R$.

Thus we have reduced the analysis in the case that $\bar{P}_1,\ldots ,\bar{P}_s$ are all identically zero to the question of whether the polynomials $A_{i,j,k}$ computed above are all simultaneously identities for some $\mathbb{F}_p$.  If the $A_{i,j,k}$ are identically zero, it is immediate that all primes $p$ work; on the other hand, if some $A_{i,j,k}$ is nonzero we let $D$ denote its total degree and we let $D_p$ denote the total degree of its reduction modulo a prime $p$, then $D_p=D$ for all but a finite computable set of primes. Then by Lemma \ref{lem:Alon} if this $A_{i,j,k}$ is an identity for $\mathbb{F}_p$ then $p\le D_p$.  Consequently, we can now handle the case when $\bar{P}_1,\ldots ,\bar{P}_s$ are all identically zero: we compute the finite set of primes $p$ with $p\le D_p$ and by evaluating each polynomial $A_{i,j,k}$ at each of the $p^s$ $s$-tuples of elements of $\mathbb{F}_p$, we can determine whether there is some prime $p$ such that the $A_{i,j,k}$ are all simultaneously identities for $\mathbb{F}_p$.  Thus it now suffices to show what to do in the case when $\bar{P}_1,\ldots ,\bar{P}_s$ are not all identically zero.

For the remaining case, we make use of Gr\"obner-Shirshov bases. The main power of Gr\"obner-Shirshov bases is that they allow one to test ideal membership.  In particular, if one has a finite Gr\"obner-Shirshov basis for an ideal in a finitely generated ring $R$ then one can decide whether the ring is commutative by testing whether the commutators of all generators have zero image in the ring.  There are, however, two potential pitfalls that arise when working in the context of rings satisfying a certain set of polynomial identities.  The first problem is that one requires all specializations of the identities to be zero and so one cannot guarantee that one is working with a finitely generated ideal; the second issue is that for noncommutative algebras there are no guarantees that the Gr\"obner-Shirshov algorithm terminates.  Thankfully, we are able to get around both of these problems in this setting.

We first give a brief overview of how to use Gr\"obner-Shirshov bases in general.  We let $C$ be a finitely generated commutative ring and we let $I$ be a finitely generated two-sided ideal of the free $C$-algebra $R:=C\{x_1,\ldots ,x_s\}$. 
For our purposes, we will assume in what follows that every ideal of $C$ is principal, which will hold in the setting we use. We put a degree lexicographic order $\preceq$ on the monomials $x_1,\ldots ,x_s$ by fixing some order on the elements of $\{x_1,\ldots ,x_s\}$. Given an ideal $I$ we then have a procedure (which need not terminate) to produce a Gr\"obner-Shirshov basis for an ideal $I$ in the algebra $C\{x_1,\ldots ,x_s\}$.  Then given a nonzero element $g\in R$ we have an \emph{initial term}, which is $c\cdot w$ with $c\in C\setminus \{0\}$ and $w\in \{x_1,\ldots ,x_s\}^*$ such that $g-c\cdot w$ is a $C$-linear combination of words in $\{x_1,\ldots ,x_s\}^*$ that are strictly less than $w$ with respect to the order $\prec$.  Given a nonzero element $f\in C\{x_1,\ldots ,x_s\}$, we let 
\begin{equation}
\label{eq:inf}
{\rm in}(f)\in (C\setminus \{0\}) \{x_1,\ldots ,x_s\}^*\end{equation} denote the initial term of $f$.

Given a finite set of generators $f_1,\ldots ,f_d$ for the ideal $I$ there is a procedure (see Bokut and Chen \cite{BC} and Mikhalev, Zolotykh \cite{MZ1,MZ2}) that produces a possibly infinite set of generators $h_1,h_2, \ldots $ for $I$ with the following properties:
\begin{enumerate}
\item ${\rm in}(h_i) \not\in (C\setminus \{0\}) \{x_1,\ldots ,x_s\}^* {\rm in}(h_j)\{x_1,\ldots ,x_s\}^*$ for $i\neq j$;
\item if $h\in I$ is nonzero then there is some $j$ such that $${\rm in}(h)\in (C\setminus \{0\}) \{x_1,\ldots ,x_s\}^* {\rm in}(h_j)\{x_1,\ldots ,x_s\}^*.$$
\end{enumerate}
In particular, this gives a way of testing ideal membership: to test whether $f_0\in I$, we simply check whether there is some $h_i$ such that $${\rm in}(f)\in (C\setminus \{0\}) \{x_1,\ldots ,x_s\}^* {\rm in}(h_i)\{x_1,\ldots ,x_s\}^*;$$ if not, then $f_0\not\in I$ and we may stop; if this is the case, we can find $c\in C\setminus \{0\}$ and $a,b\in \{x_1,\ldots ,x_s\}^*$ such that the monomial occurring in ${\rm in}(f_0-cah_ib)$ is degree lexicographically less than the monomial in ${\rm in}(f_0)$.  It then suffices to check that $f_1:=f-cah_ib \in I$, and by applying the procedure above we obtain a sequence $f_0,f_1,\ldots $ which must terminate since the collection of monomials is well-ordered with respect to $\prec$; in particular, we either obtain that $f_n\not\in I$ for some $n$, in which case $f_0\not\in I$; or we get $f_n=0$ for some $n$, in which case $f_0$ is in $I$.

We are now able to complete the analysis for the class $\mathcal{A}_p$. We may assume that $\bar{P}_1,\ldots ,\bar{P}_m$ are not all zero and we let $D$ denote the largest degree of a nonzero element of $\{\bar{P}_1,\ldots ,\bar{P}_k\}$ and we let $d$ denote the gcd of the coefficients occurring in the elements $\{\bar{P}_1,\ldots ,\bar{P}_m\}$.  Then each monomial that occurs with nonzero coefficient in an element from $\{\bar{P}_1,\ldots ,\bar{P}_k\}$ is of the form $X_1^{i_1}\cdots X_s^{i_s}$ with $0\le i_1,\ldots ,i_s<D+1$. Then since the elements
$$i_1+i_2(D+1)+\cdots +i_s(D+1)^{s-1}$$ with $0\le i_1,\ldots ,i_s<D+1$ are pairwise distinct, we see that the univariate polynomials
$$G_i(X):=P_i(X,X^{D+1}\ldots ,X^{(D+1)^{s-1}})=\Phi(P_i)(X,X^{D+1},\ldots ,X^{(D+1)^{s-1}})$$ have the property that the gcd of the coefficients occurring in the elements $$\{G_1(X),\ldots ,G_m(X)\}$$ is again $d$; moreover, each $G_i$ is of degree at most $(D+1)^s$.  Then there is some $i\in \{1,\ldots ,m\}$ and some $j\in \{0,\ldots , (D+1)^s\}$ such that $G_i(j)\neq 0$.  We then let $N$ denote the gcd of all elements of the form $G_i(j)$ with $i\in \{1,\ldots ,m\}$ and $j\in \{0,\ldots , (D+1)^s\}$, which we can compute.  It follows that if $P_1,\ldots ,P_m$ are identities for $R$ then $N=0$ in $R$.  In particular, if $|N|$ has prime factorization $p_1^{a_1}\cdots p_s^{a_s}$ and $R\in \mathcal{A}_p$ is such that $P_1,\ldots ,P_m$ are identities for $R$ then $p\in \{p_1,\dots ,p_s\}$ and if $p=p_i$ then $p_i^{a_i}=0$ in $R$.

So we will now give the procedure for dealing with this finite set of primes.  For $p\in \{p_1,\ldots ,p_s\}$ we then have that if $R\in \mathcal{A}_p$ satisfies the identities $$P_1=\cdots =P_m=0$$ then we have some fixed computable $a$ such that $p^a=0$ (here $a=a_i$, where $i$ is such that $p=p_i$).  Now if $p^a\mid d$ then $\bar{P}_1,\ldots ,\bar{P}_m$ all vanish on $R$ and so in the expression given in Equation (\ref{eq:Hk}), we have that each $H_k$ vanishes on $R$; in particular, in this situation, we may assume without loss of generality that each $H_k$ is identically zero and so we appeal to the earlier case we considered where this occurs.

Thus we may assume that $p^a\nmid d$, and we let $p^b=\gcd(p^a,d)$ with $b<a$.  Then there is some $k$ such that the gcd of $p^a$ and the coefficients occurring in $G_k$ is exactly $p^b$.
Then there are integer polynomials $F$ and $F'$ such that $G_k(X) = p^b F(X) + p^{b+1} F'(X)$ and such that $F(X)$ is nonzero and no nonzero coefficient in $F(X)$ is a multiple of $p$.  Then since $G_k$ is an identity for $R$, we see that
$$p^b (F(X) + p F'(X))\left(\sum_{j=0}^{a-b-1} F(X)^{a-b-1-j} (pF'(X))^{j}(-1)^j\right),$$
which is  $$p^b F(X)^{a-b} +(-1)^{a-b-1} p^a F'(X)^{a-b},$$ is also an identity for $R$. Since $p^a=0$ in $R$, we then see that
$p^b F(X)^a$ is zero on $R$.  Now let $t$ denote the smallest power of $X$ that occurs with a nonzero coefficient in $F(X)$.  By multiplying $G_k$ by a unit in $\mathbb{Z}/p^a\mathbb{Z}$, we may assume that the coefficient of $X^t$ is $1$ in $F(X)$.  Then $t\ge 1$ since otherwise we'd have $p\nmid F(0)$, but by construction $p^a$ must divide $p^b(F(0)+pF'(0))$, so this can't be the case.  Thus $t\ge 1$.  It follows that for $u\in J(R)$ we have $p^b u^{at} =0$ in $R$ since $p^b F(u)^a \in p^b u^{at}(1+J(R))$.

We now show how one can use Gr\"obner-Shirshov bases to complete the decision procedure in this remaining case.  We claim that there is a noncommutative ring $R$ in the class $\mathcal{A}_p$ for which $P_1,\ldots, P_m$ are all identities for $R$ if and only if
$$S:=\mathbb{Z}\{X,Y\}/((p^a, p^b (X,Y)^{at})+L)$$ is noncommutative, where
$L$ is the finitely generated ideal generated by all specializations of $P_1,\ldots ,P_m$ at elements of the form
$X_i=\sum a_w w$, where $w$ runs over words in $X$ and $Y$ of length less than $at$ and $a_w\in \{0,\ldots ,p^a-1\}$.  To see this, notice that since $P_1,\ldots ,P_m$ are identities for $R$ all elements of $L$ must be zero in $R$ and since $p^a=0$ in $R$, we see that $R$ is a homomorphic image of $S$.
Now it suffices to show that $P_1,\ldots ,P_m$ are identities for $S$.  Notice that if
$z_1,\ldots ,z_s\in S$, then we may write $z_i =b_i + c_i$ with $b_i$ of the form $\sum a_w w$, where $w$ runs over words in $X$ and $Y$ of length less than $at$ and $a_w\in \{0,\ldots ,p^a-1\}$, and $c_i\in (p^a, (X,Y)^{at})$.
Then by construction $$P_i(b_1,\ldots ,b_s) \in L.$$
Then using Equation (\ref{eq:Hk}), $$P_i(z_1,\ldots ,z_s)=\bar{P_i}(z_1,\ldots ,z_s) + \sum A_{i,j,k}[z_j,z_k] ~(\bmod ~I).$$
Since every coefficient of $\bar{P}_i$ is divisible by $p^b$ and since the image of $p^b(X,Y)^{at}$ is zero in $S$, we see that the images of $\bar{P_i}(z_1,\ldots ,z_s)$ and $ \bar{P}_i(b_1,\ldots ,b_s)$ are equal in $S$.
Similarly, since the images of $(p,X,Y)[X,Y]$ and $[X,Y](p,X,Y)$ are zero in $S$ we see that
the images of $\sum A_{i,j,k}(z_1,\ldots ,z_s) [z_j,z_k] $ and $\sum A_{i,j,k}(b_1,\ldots ,b_s) [b_j,b_k] $ in $S$ are equal.  It follows that $P_i(z_1,\ldots ,z_s)=P_i(b_1,\ldots ,b_s)=0$, and so we have proved the claim.

Now we put a degree lexicographic order on the monomials in $X$ and $Y$ with $Y\succ X$ and we consider the finitely generated ideal generated by $$p^a,  p^j (X,Y)^{at}, (p,X,Y)[X,Y], [X,Y](p,X,Y)$$ along with the finitely many generators 
for $L$.  Then by construction $pYX$, $XYX$, $YYX$, $YXX$, and $YXY$ are all initial terms of elements of $I$.  It 
follows that every element of our Gr\"obner-Shirshov basis whose leading term is not on the list $$\{pYX, XYX, YYX, YXX, YXY\}$$ 
cannot be a multiple of one of these terms and hence must be (after multiplying a suitable unit in $\mathbb{Z}/p^a\mathbb{Z}$) $YX$ or of the from $\alpha X^i Y^j$ with $\alpha\in \{1,p,\ldots ,p^{a-1}\}$.
We now claim that $I$ must have a finite Gr\"obner-Shirshov basis.  To see this, suppose that this is not the case.  Then there must be an infinite collection of elements $f_1,f_2,\ldots $ in our Gr\"obner-Shirshov basis whose initial terms are of the form $p^k X^i Y^j$ with $k$ a fixed element in $\{0,1,\ldots, {a-1}\}$.  Thus we have ${\rm in}(f_i)=p^k X^{a_i}Y^{b_i}$ with the pairs $(a_i,b_i)$ pairwise distinct, since the initial terms of the $f_i$ are pairwise distinct.  Moreover, for $i\neq j$, ${\rm in}(f_j)$ cannot be in the monomial ideal generated by ${\rm in}(f_i)$.  Thus we see that for $i\neq j$ we must have either $a_i< a_j$ and $b_i > b_j$ or $a_i>a_j$ and $b_i <b_j$.  Now let $a=\min\{a_i \colon i\ge 1\}$ and pick $i_0$ such that $a_{i_0}=a$.  We let $b=b_{i_0}$.  Then
${\rm in}(f_{i_0}) = p^k X^a Y^b$ and since $X^a Y^b$ is not in the monomial ideal generated by $X^{a_j} Y^{b_j}$ for $j\neq i_{0}$ and since each $a_j\ge a$, we see that $b_j \le b$ for all $j$.  A symmetric argument shows that there is some $c$ such that $a_j\le c$ for all $j$.  But there are only finitely many monomials of the form $X^e Y^f$ with $e\le c$ and $f\le b$, which contradicts the fact that the initial terms of our monomials are pairwise distinct.  Thus $I$ has a finite 
Gr\"obner-Shirshov basis, which means, in particular, that the Gr\"obner-Shirshov basis algorithm, applied to the ideal $I$, necessarily terminates.  Then since ideal membership is testable when one has a finite Gr\"obner-Shirshov basis, we can test whether $[X,Y]\in I$ and in particular, we can determine whether our ring is commutative.
\subsection{Examples of application of the algorithm}
The algorithm above makes use of the three types of rings that occur in the statement of Theorem \ref{thm:trichotomy}.  We give a few examples to show how this algorithm can be applied in practice.
\begin{example} The identity $P(X,Y)=X^2 Y^2 + X^4 Y^2 + XYXY$ forces a ring to be commutative.
\end{example}
\begin{proof} If we follow the steps, we first compute $P$ mod the commutator ideal in terms of the basis $\{X^i Y^j\}$ and we get $P(X,Y)=2X^2 Y^2 + X^4 Y^2 - X[X,Y]Y$.  Since $\bar{P}$ is nonzero, we see that taking $X=1, Y=1$ gives that $3=0$ in $R$.  In particular, if there is a noncommutative ring for which $P=0$ is an identity then $P$ must be an identity for $U_3$, a ring of the form $B_{3,n,i}$ with $n\ge 2$, or a ring in $\mathcal{A}_3$ with $3=0$. For $U_3$, we compute and find that $X=e_{1,1}$, $Y=e_{1,2}+e_{2,2}$ gives $P(X,Y) = -e_{1,2}\neq 0$ and so $P$ is not an identity for $U_3$.  For $B_{3,n,i}$ we have the additional identity $Z[X,Y]=[X,Y]Z^{3^i}$ and so if $P=0$ is an identity for $B_{3,n,i}$ then so is
$2 X^2 Y^2 + X^4 Y^2 - [X,Y]X^{3^i}Y=0$.  Since $B_{3,n,i}$ mod its commutator ideal is $\mathbb{F}_{3^n}$, we have that $ 2X^2 Y^2 + X^4 Y^2=0$ must be an identity for $\mathbb{F}_{3^n}$ and so by Lemma \ref{lem:Alon}, we must have $n\le 1$, a contradiction.  Thus $P=0$ cannot be an identity for $B_{3,n,i}$ with $n\ge 2$. Finally, if $P=0$ is an identity for a ring $R$ in $\mathcal{A}_3$ with $3=0$, then following the algorithm we find that
$P(X,X)=2X^4+X^6=-X^4(1-X^2)$ is an identity for $R$.  Then we have $x^4=0$ for all $x\in J(R)$ since $1-x^2$ is a unit whenever $x\in J(R)$.  

We now claim that $J(R)^7=(0)$.  To see this, since $R\in \mathcal{A}_3$, $R$ is generated by two elements $u,v\in J(R)$, it suffices to show that all monomials in $u$ and $v$ of length seven are equal to zero in $R$.  
Since $R\in \mathcal{A}_3$, we have $(0)=[R,R]J(R)=J(R)[R,R]$.  Now we suppose that some monomial $w$ of length seven in $u$ and $v$ is nonzero.  Then since $u^4=v^4=0$, after possibly switching the labels of $u$ and $v$, $w$ must be of the form $u^a v^b w'$ with $0<a,b\le 3$ and $w'$ a monomial of length $7-a-b\ge 1$ in $u$ and $v$ that starts with $u$.  Among all such nonzero monomials, we pick one with $a$ maximal.  Then since $u^a\in J(R)$, 
$u^a [v^b, w'] =0$ in $R$ and so $u^a w' v^b  = u^a v^b w'$. In particular, $u^a w' v^b$ is nonzero.  But $u^a w'$ is a monomial with $u^{a+1}$ as a prefix, which contradicts the maximality of $a$.  Thus we obtain the claim.

Since $[[X,Y],Z]=0$ is an identity for every ring in $\mathcal{A}_3$ we see that
$ 2X^2 Y^2 + X^4 Y^2 - [X,Y]XY=0$ is an identity in $R$.  At this point in the algorithm we would normally use Gr\"obner-Shirshov bases, which are computationally non-trivial, but in this case one can use an ad hoc argument to simplify things.
Note that $R$ is an $\mathbb{F}_3$-algebra generated by elements $u,v$ with $(u,v)^7=(0)$.  Then we first compute all evaluations of $P$ at $\mathbb{F}_3$-linear combinations of words of length at most $6$ in $u$ and $v$.  Doing this, we find that $0=P(1+u,1+v)\in  v+ J(R)^2$.  Since all elements in $J(R)^2$ are central we then get $[v,u]=0$ and so $R$ is commutative.
\end{proof}
We give a second example, illustrating the other key case: when $P$ is in the commutator ideal.
\begin{example} Let $P(X,Y)=X^2YXY -X^2Y^2X -XYX^2Y +XY^2X^2 +YX^2YX - YXYX^2$. Then there is a noncommutative ring $R$ for which $P=0$ is an identity.
\end{example}
\begin{proof} To implement the algorithm, we compute $P$ modulo the commutator ideal, using the basis $\{X^i Y^j\}$.
Doing so, we find
$$P(X,Y)= X[X,Y]XY - X[X,Y]YX -[X,Y]XYX +[X,Y]YX^2.$$
Then for any ring $R$ in the statement of Theorem \ref{thm:trichotomy} we have $[R,R]R[R,R]=(0)$ and so $P=0$ holds for $R$ if and only if the identity
$X[X,Y]XY -X[X,Y]XY -[X,Y]X^2Y -[X,Y]X^2Y = 0$ holds for $R$.  But since this is the identity $0=0$, we see that in fact $P$ is an identity for every ring whose commutator ideal has square zero.  In particular, $P=0$ is an identity for the ring $U_2$.
\end{proof}

The next example is a special case of the main result from \cite{AD}, which is more general in that it allows $m$ and $n$ to depend on the elements of the ring and also allows there to be a sign that depends on the elements of the ring.

\begin{example} If $m,n\ge 2$ and have opposite parity and $P(X)=X^m-X^n$ then the identity $P(X)=0$ forces a ring to be commutative.
\end{example}
\begin{proof} To apply the algorithm in practice one must have fixed $m$ and $n$, but we shall use ad hoc arguments to get around this restriction. First, we find $P(-1)=0$ gives that $2=0$ in every ring for which $P(X)=0$ is an identity. Thus if there is a noncommutative ring for which $P(X)=0$ is an identity then it must hold for either $U_2$, a ring of the form $B_{2,n,i}$ with $n\ge 2$, or a ring from $\mathcal{A}_2$ with $2=0$.
The case of $U_2$ is straightforward: take $X= e_{1,1}+e_{1,2}+e_{2,2}$; then $X^k=1$ when $k$ is even and $X^k = X$ when $k$ is odd, so $X^m\neq X^n$ in $U_2$.
To actually apply the algorithm for $B_{2,n,i}$, we would bound $n$ in terms of the degree of $P$ for a fixed $m$ and $n$ and then check whether any of the resulting finite set of rings have $P=0$ as an identity. But we observe in this case we can again take $X= e_{1,1}+e_{1,2}+e_{2,2}$ and we get that $X^m- X^n$ is a non-identity for $B_{2,n,i}$.  Finally, if $R$ is in $\mathcal{A}_2$ and we take $X=1+u$ with $u\in J(R)\setminus J(R)^2$ then
$X^k \in 1+J(R)^2$ if $k$ is even and $X^k \in 1+u+J(R)^2$ if $k$ is odd. In particular, if $X^m= X^n$ we see that $u\in J(R)^2$, which is a contradiction. Thus the identity $P(X)=0$ forces a ring to be commutative.
\end{proof}

\section{Jacobson's and Herstein's theorems revisited}
\label{Applications}
A famous theorem of Jacobson \cite[Theorem 11]{J} asserts that if a ring $R$ has the property that, for each $x\in R$, there exists an integer $n(x)>1$ depending on $x$ with $x^{n(x)}=x$ (such rings are further called {\it potent}), then $R$ is commutative. On the other hand, Herstein \cite{H} generalized this important assertion by proving that if $R$ is a ring with center $Z(R)$ such that $x^{n(x)}-x\in Z(R)$ for every $x\in R$, then $R$ is necessarily commutative. A recent generalization of Jacobson's result is given in \cite{AD}: If $R$ is a ring such that, for any $x\in R$, there are two integers $n(x)>m(x)>1$ of opposite parity with $x^{n(x)}=x^{m(x)}$, then $R$ is commutative.

So, taking into account the statements alluded to above, it is quite logical to consider those rings $R$ for which $x^{n(x)}-x^{m(x)}\in Z(R)$. However, the next construction illustrates that situation is more complicated than one might anticipate. In fact, if $p\geq 3$ and we take $R=\mathbb{F}_p\{x,y\}/J$, where $J$ is the ideal $(x,y)^3$ (the cube of the homogeneous maximal ideal), then every element $a$ in $R$ can be written as $c+u$ with $c$ in $\mathbb{F}_p$ and $u$ in the homogeneous maximal ideal. Since $u^3=0$ and $p\geq 3$, we see that $(c+u)^p = c^p + u^p = c$ and so $a^{2p} - a^p$ is a central element for all $a$ in $R$. Notice $R$ is not commutative since $xy-yx$ is not in $J$ by construction. Thus additional conditions are required to obtain a commutativity theorem.

\medskip

The following result gives some further advantage in discovering the discrepancies in commutativity of rings when we involve the center $Z(R)$.

\begin{thm} Let $P(X)\in \mathbb{Z}[X]$ be a polynomial. Then the following statements hold:
\begin{enumerate}
\item there is a noncommutative ring $R$ for which $P(X)=0$ is an identity if and only if there is a prime $p$ such that $P(X)\in (p,(X^p-X)^2)\mathbb{Z}[X]$;
\item  there is a noncommutative ring $R$ for which $P(X)Y=YP(X)$ is an identity if and only if there is a prime $p$ such that the first derivative, $P'(X)$, of $P(X)$ is in the ideal $$(p,X^p-X)\mathbb{Z}[X].$$
\end{enumerate}
\label{thm:Herstein}
\end{thm}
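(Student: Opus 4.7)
The plan is to deduce both statements from the classification in Theorem~\ref{thm:trichotomy}, together with two elementary translations that will be used repeatedly: by Lemma~\ref{lem:Alon}, a polynomial $Q\in\mathbb{Z}[X]$ lies in $(p,X^p-X)\mathbb{Z}[X]$ if and only if $Q(c)\equiv 0\pmod p$ for every $c\in\mathbb{F}_p$; and a direct computation (write $P=pA+(X^p-X)B$ and differentiate, giving $P'\equiv -B\pmod{p,X^p-X}$) shows that $P\in(p,(X^p-X)^2)\mathbb{Z}[X]$ if and only if both $P$ and $P'$ lie in $(p,X^p-X)\mathbb{Z}[X]$.

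For the sufficient direction of (1), I would verify that $U_p$ works: every $a\in U_p$ has $a^p-a$ strictly upper-triangular by Fermat's little theorem applied to the diagonal, so $(a^p-a)^2=0$; combined with $pU_p=0$, this makes every $P\in(p,(X^p-X)^2)$ vanish identically on $U_p$. For the sufficient direction of (2), I would use the ring $R_p:=\mathbb{F}_p\{u,v\}/(u,v)^3$, which is noncommutative and whose radical $J$ satisfies $J^3=0$, so $J^2\subseteq Z(R_p)$. Every element is $c+w$ with $c\in\mathbb{F}_p$ and $w\in J$, and the truncated binomial expansion yields
\[
P(c+w)=P(c)+P'(c)w+a_2(c)w^2,
\]
where $a_2(c)\in\mathbb{Z}$; the first and third summands are already central, and the hypothesis $P'(c)\equiv 0\pmod p$ kills the middle one, so $P(c+w)\in Z(R_p)$.

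For the necessary directions I invoke Theorem~\ref{thm:trichotomy}: the ambient noncommutative ring can be replaced by $U_p$, some $B_{p,n,i}$, or some $R\in\mathcal{A}_p$ for an appropriate prime $p$. In $U_p$, and also in $B_{p,n,i}$ after restricting $x$ to the prime subfield $\mathbb{F}_p\subset\mathbb{F}_{p^n}$ so that $x^{p^i}=x$, the Jordan block $a=\left(\begin{smallmatrix}c&1\\0&c\end{smallmatrix}\right)$ with $c\in\mathbb{F}_p$ satisfies $a^k=\left(\begin{smallmatrix}c^k&kc^{k-1}\\0&c^k\end{smallmatrix}\right)$ and hence $P(a)=\left(\begin{smallmatrix}P(c)&P'(c)\\0&P(c)\end{smallmatrix}\right)$. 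Its vanishing immediately yields $P(c)\equiv P'(c)\equiv 0\pmod p$, giving (1); and its centrality (the center of $U_p$ being $\mathbb{F}_p\cdot I$, and the center of $B_{p,n,i}$ consisting only of scalar matrices) forces $P'(c)\equiv 0\pmod p$, giving (2).

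The main obstacle is the class $\mathcal{A}_p$, whose rings are defined only abstractly. Here I would exploit the defining relations $(p,x,y)[x,y]=[x,y](p,x,y)=0$ to prove two structural facts: that $x^j\in Z(R)$ for $j\ge 2$ (because $x[x,y]=[x,y]x=0$ gives $x^2y=xyx=yx^2$, and higher powers follow by induction), and that the two-sided ideal $[R,R]$ is a cyclic abelian group of order $p$, spanned by $[x,y]$. Evaluating at $a=c+x$ with $c\in\mathbb{Z}$, a binomial expansion then yields $P(a)\equiv P(c)+P'(c)x\pmod{Z(R)}$. For (1), the vanishing of $P(a)$ together with its vanishing in the commutative quotient $R/[R,R]$ forces $P(c)\equiv 0\pmod p$ and $[P'(c)x,y]=P'(c)[x,y]=0$, which because $[R,R]\cong\mathbb{F}_p$ gives $p\mid P'(c)$; for (2), the centrality of $P(a)$ directly forces $P'(c)[x,y]=0$, yielding the same conclusion. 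As $c$ ranges over $\mathbb{F}_p$, the two translations from the opening paragraph convert these pointwise conditions into the desired ideal memberships $P\in(p,(X^p-X)^2)\mathbb{Z}[X]$ and $P'\in(p,X^p-X)\mathbb{Z}[X]$, completing the proof.
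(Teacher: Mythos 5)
Your proposal is correct and follows essentially the same route as the paper: the easy direction via $U_p$ and $\mathbb{F}_p\{u,v\}/(u,v)^3$, and the hard direction by reducing to the three classes of Theorem \ref{thm:trichotomy} and extracting $P(c)\bmod p$ and $P'(c)\bmod p$ from a first-order expansion of $P$ at a scalar plus a nilpotent whose higher powers are central. The only notable variation is in handling characteristic $p^m$ with $m>1$ in the class $\mathcal{A}_p$: the paper substitutes $v\mapsto v+p^{m-1}$ to force $p\mid P'$, while you obtain the same conclusion uniformly from the fact that $[x,y]$ has additive order exactly $p$; both arguments are valid.
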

\begin{proof}
If $P(X)\in (p,(X^p-X)^2)$, then $P(X)=0$ is an identity for the noncommutative ring $U_p$ and if $P'(X)\in (p,X^p-X)$, then $[P(X),Y]=0$ is an identity for the (noncommutative) ring
$\mathbb{F}_p\{u,v\}/(u,v)^3$.    Thus one direction is immediate.

We now consider the more difficult direction. Suppose that there is a ring $R$ that is not commutative for which $P(X)=0$ is an identity. Then we may assume that $R$ lands in one of the cases (a)--(c) given in the statement of Theorem~\ref{thm:trichotomy}. In particular, there is some prime $p$ and some $m\ge 1$ such that $R$ has characteristic $p^m$. In all cases, $R/J(R)$ contains a subring isomorphic to $\mathbb{F}_p$ and so $X^p-X$ must divide $P(X)$ mod $p$, since $P(X)$ must be an identity for $\mathbb{F}_p$. Thus we may write $P(X)=(X^p-X)Q(X)$ modulo $p$.  Now let $u=\alpha+[s,t]$ with $\alpha\in \mathbb{Z}$ and $[s,t]$ a nonzero commutator in $R$. We now consider two cases. The first case is when $R$ is an $\mathbb{F}_p$-algebra.
Then since $[R,R]^2=p[R,R]=0$, $0=P(u) =  ((\alpha^p-\alpha) - [s,t])Q(u)$.  Moreover, since $\alpha^p-\alpha=0$ in $R$, we have
$$P(u)= ((\alpha^p-\alpha) - [s,t])Q(u)= - [s,t]Q(\alpha).$$  So we must have $Q(\alpha)=0$ in $R$ for every $\alpha\in \mathbb{Z}$, which implies that $$X^p-X\mid Q(X)~(\bmod~p),$$ and so we are done in this case.  Alternatively, $R$ has characteristic $p^m$ with $m$ strictly larger than $1$ and so for $v\in R$ we have
$$0=P(v+p^{m-1}) = P(v) + p^{m-1} P'(v)=p^{m-1}P'(v).$$  It follows that $p^{m-1}P'(X)$ is also an identity for $R$ and since $R$ has characteristic $p^m$, we see that $p\mid P'(\alpha)$ for every $\alpha\in \mathbb{Z}$ and so 
$P(X)\in (p,(X^p-X)^2)$ as required.

Next suppose that $P(X)\in \mathbb{Z}[X]$ is a polynomial and that there exists a ring $R$ which is not commutative such that $P(X)Y-YP(X)=0$ is an identity for $R$.  Then again we may assume that $R$ is covered by one of the cases (a)--(c) given in the statement of Theorem \ref{thm:trichotomy}, and so there is a prime $p$ and $m\ge 1$ such that $p^mR=(0)$. We first consider the case when not all commutators are central. Then $R$ is not in the class $\mathcal{A}_p$ and so it is an $\mathbb{F}_p$-algebra.  Then there exist $u,v$ and $z$ in $R$ such that $[[u,v],z]\neq 0$. But since $[u,v]^2=0$, one has for $\alpha\in \mathbb{F}_p$ that $P(\alpha +[u,v]) = P(\alpha)+P'(\alpha)[u,v]$ and so since $[P(\alpha+[u,v]),z]=0$, we see that $P'(\alpha)=0$ for all $\alpha\in \mathbb{F}_p$.  In particular, $X^p-X$ divides $P'(X)$ mod $p$, and so we obtain the result in this case.

We now consider the case when all commutators are central in $R$ and so $R$ is in the class $\mathcal{A}_p$. Given ideals $I$ and $J$ of $R$, we let $[I,J]$ denote the two sided ideal of $R$ generated by commutators $[u,v]$ with $u\in I$ and $v\in J$. Then, in this case,
$[[R,R],R]=(0)$ and $[R,R]^2=(0)$. For $x,y\in R$, $$P(x)y-yP(x)\equiv P'(x)[x,y]~(\bmod ~L),$$ where $L=[[R,R],R]$.  Since the annihilator of nonzero commutators contains the Jacobson radical and since $R/J(R)\cong \mathbb{F}_p$, we therefore see that $P'(X)$ is an identity for a finite field of characteristic $p$. In particular, $P'(X)\in (p,X^p-X)\mathbb{Z}[X]$, and so we get the result in this case too.
\end{proof}
We point out that Theorem \ref{thm:algorithm} shows that one can decide whether there is some prime $p$ for which either (1) or (2) holds in the statement of Theorem \ref{thm:Herstein}.  In this case, however, the decision procedure can be performed much more quickly.  For example, for (1), if $P(X)$ has degree $d$ and is nonzero, one can find some $i\in \{0,\ldots ,d\}$ such that $N:=P(i)$ is nonzero.  If $P(X)$ is an identity for a ring $R$ then $N=0$ in $R$ and so if there exists a prime $p$ for which $P(X)\in (p,(X^p-X)^2)$ then $p\mid N$.  For the finite set of primes $p$ for which $p\mid N$ one can then check whether $P(X)$ is in this ideal.  The condition (2) can be checked similarly.

As a consequence, we obtain a general commutativity theorem.

\begin{cor} Let $a$ and $b$ be positive integers with $a>b$.  A ring satisfying the identity $$[X^a-X^b,Y]=0$$ is necessarily commutative if and only if one of the following conditions hold:
\begin{enumerate}
\item $b=1$;
\item $\gcd(a,b)=1$ and $a$ and $b$ have opposite parity.
\end{enumerate}
\end{cor}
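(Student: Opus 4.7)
The plan is to apply Theorem~\ref{thm:Herstein}(2) to $P(X) = X^a - X^b$, whose derivative is $P'(X) = aX^{a-1} - bX^{b-1}$. By that theorem, the identity $[X^a - X^b, Y] = 0$ fails to force commutativity if and only if there is a prime $p$ such that $P'(X) \in (p, X^p - X)\mathbb{Z}[X]$. Since $X^p - X = \prod_{\alpha \in \mathbb{F}_p}(X - \alpha)$, this membership is equivalent to the elementary condition
\[
a\alpha^{a-1} \equiv b\alpha^{b-1} \pmod{p} \quad\text{for every } \alpha \in \mathbb{F}_p.
\]
The whole task is then to determine for which pairs $(a,b)$ no prime $p$ satisfies this, and to check the result matches (1) or (2).

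First, handle $b = 1$ separately: evaluating $aX^{a-1} - 1$ at $\alpha = 0$ gives $-1$, which is nonzero mod every prime, so no prime works and the identity forces commutativity. This accounts for condition (1). From here on, assume $b \ge 2$, so that the $\alpha = 0$ evaluation is automatic and it suffices to test $\alpha \in \mathbb{F}_p^{\ast}$.

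Multiplying the condition by $\alpha^{1-b}$ rewrites it as $a\alpha^{a-b} \equiv b \pmod{p}$ for all $\alpha \in \mathbb{F}_p^{\ast}$. I would split on whether $p \mid a$. If $p \mid a$, the right-hand side forces $p \mid b$, so $p \mid \gcd(a,b)$; conversely, any prime dividing $\gcd(a,b)$ trivially works. If $p \nmid a$, then $\alpha^{a-b}$ must be a constant function of $\alpha \in \mathbb{F}_p^{\ast}$, which (since $a-b \ge 1$) forces $(p-1) \mid (a-b)$; that constant is then $1$, and the equation $a \equiv b \pmod{p}$ emerges, i.e., $p \mid (a-b)$. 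Combining, this case occurs iff $p(p-1) \mid (a-b)$. Thus a prime $p$ satisfies the condition if and only if $p \mid \gcd(a,b)$ or $p(p-1) \mid (a-b)$.

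Now no prime works iff $\gcd(a,b) = 1$ (ruling out the first alternative) and no prime $p$ has $p(p-1) \mid (a-b)$; since $p(p-1)$ is even for every prime $p$, the latter holds iff $a - b$ is odd, i.e., $a$ and $b$ have opposite parity. Combined with the $b = 1$ case, this gives the equivalence claimed in the corollary. I do not anticipate any real obstacle here: the main care needed is to treat the subcase $p \mid a$ before dividing by $\alpha^{b-1}$ and reasoning about $\alpha^{a-b}$ being constant on $\mathbb{F}_p^{\ast}$, but once this is separated out the arithmetic is routine.
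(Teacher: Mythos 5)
Your proposal is correct and follows essentially the same route as the paper: both invoke Theorem \ref{thm:Herstein}(2) and reduce to deciding for which primes $p$ one has $P'(X)\in(p,X^p-X)$, with the crux in each case being that this forces $p\mid\gcd(a,b)$ or $p(p-1)\mid a-b$, the latter impossible when $a-b$ is odd. Your version packages this as a single clean characterization of the admissible primes (via evaluation on $\mathbb{F}_p^{*}$) where the paper argues the two implications separately using $P'(1)=a-b$ and the divisibility $X^{p-1}-1\mid X^{a-b}-1$, but the content is the same.
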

\label{cor:XXX}
\begin{proof}
Let $P(X)=X^a-X^b$. It suffices to show that there is a prime $p$ such that $P'(X)\in (p, X^p-X)$ if and only if $b>1$ and either $\gcd(a,b)>1$ or $a$ and $b$ have the same parity.

Notice $P'(X)=  X^{b-1}(aX^{a-b}-b)$ and so if $b=1$, then $P'(X)=aX^{a-b}-1$, which is nonzero mod $p$ when $X=0$ and hence there is no prime $p$ such that $P'(X)\not \in (p,X^p-X)$ in this case. Next suppose that $\gcd(a,b)=1$ and $a$ and $b$ have opposite parity. In particular, there is no prime $p$ such that $p^2-p$ divides $a-b$, since $p^2-p$ is always even.  If there is some prime $p$ such that $P'(X)\in (p,X^p-X)$, then since $P'(1)=a-b$, $p|a-b$ and so 
$$P'(X)\equiv a X^{b-1}(X^{a-b}-1)~(\bmod~p).$$ 
Since $\gcd(a,b)=1$, we then see $p\nmid a$ and thus if $P'(X)\in (p,X^p-X)$ then $X^{p-1}-1$ must divide $X^{a-b}-1$ mod $p$, and so $(p-1)\mid a-b$. Thus $p(p-1)\mid a-b$, a contradiction, since they have opposite parity. Then, by Theorem \ref{thm:Herstein}, a ring satisfying the identity $[X^a-X^b,Y]=0$ with $a$ and $b$ satisfying the above conditions is necessarily commutative. 

To see the other direction, suppose that $b>1$ and either $\gcd(a,b)>1$ or $2$ divides $a-b$. If there is some prime $q$ such that $q|\gcd(a,b)$, then $P'(X)\equiv 0~(\bmod~q)$, and so Theorem \ref{thm:Herstein} (2) then shows that the condition $\gcd(a,b)=1$ is necessary. If $b>1$ and $2$ divides $a-b$, then
$P'(X)\equiv a X^{b-1}(X^{a-b}-1)~(\bmod~2)$. Since $b>1$, it follows that $P'(0)\equiv 0~(\bmod ~q)$; and since $X-1$ divides $P'(X)$ mod $2$, it must be that $P'(X)\in (2,X^2-X)$, and so we see necessity of the condition that $a$ and $b$ have opposite parity from Theorem \ref{thm:Herstein} (2).
\end{proof}

Notice one can rephrase Corollary \ref{cor:XXX} as follows: \emph{Let $R$ is a ring and $n$, $m$ are fixed integers greater than $1$ of opposite parity such that $\gcd(m,n)=1$. If, for all $x\in R$, $x^n-x^m\in Z(R)$, then $R$ is necessarily commutative.}  We point out that this somewhat extends the mentioned above results from \cite{H} and \cite{AD} for fixed degrees.  The general form of the results from \cite{H} and \cite{AD} suggest the following should hold. 
\begin{conj*} Suppose that $R$ is a ring such that for every $x\in R$ there exist positive integers $a=a(x)$ and $b=b(x)$, depending on $x$, such that:
\begin{enumerate}
\item $x^{a}-x^{b}$ is central;
\item  either $b=1$ or $\gcd(a,b)=1$ and $a$ and $b$ have opposite parity.
\end{enumerate}
Then $R$ is commutative.
\end{conj*}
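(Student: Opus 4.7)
The plan is to adapt the classical two-step strategy of Herstein's commutativity theorem: first, reduce to showing $R/J(R)$ is commutative, then show $J(R) \subseteq Z(R)$. Observe at the outset that the hypothesis passes to every homomorphic image of $R$: if $\pi : R \to R/I$ is a surjection and $\bar{x} \in R/I$ lifts to $x \in R$, the element $x^a - x^b \in Z(R)$ has central image, and the gcd/parity conditions on $(a,b)$ are unchanged. Hence one may freely pass to quotients when convenient.

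For the first step, $R/J(R)$ is a subdirect product of primitive rings, each of which inherits the hypothesis, so one may assume $R$ is primitive. By Jacobson's density theorem, either $R$ is a division ring or $R$ contains $M_n(D)$ for some $n \geq 2$ and some division ring $D$. In the latter case, consider $x = e_{1,2} + e_{2,1}$: since $x^2 = e_{1,1}+e_{2,2}$ is a nontrivial idempotent and $x^{2k+1}=x$, $x^{2k+2}=x^2$, the difference $x^a - x^b$ is central only when $a \equiv b \pmod 2$. The opposite-parity clause of condition (2) is then incompatible with centrality, while the $b=1$ clause (with $a$ even to achieve opposite parity with the odd $b=1$) gives $x^a = x^2 \ne x$, which is noncentral; either way this rules out $M_n(D)$ with $n\geq 2$, so every primitive factor is a division ring $D$. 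For a division ring, the strategy is to show each $x \in D$ is algebraic over the prime subfield: writing $x^a - x^b = x^b(x^{a-b}-1) \in Z(D)$, the case $x^{a-b}=1$ gives algebraicity directly, while the case $x^{a-b}\ne 1$ yields $x^b = z(x^{a-b}-1)^{-1}$ with $z$ central, and iterating the hypothesis on $x^{a-b}$ and $x^b$ while exploiting $\gcd(a,b)=1$ and $a-b$ odd forces $x$ to lie in a finite extension of $Z(D)$. A theorem of Jacobson on algebraic division rings then gives commutativity.

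For the second step, once $R/J(R)$ is commutative, establish first the centrality of all idempotents via the Herstein trick: if $e^2=e$ is not central and $[e,x] \neq 0$, the element $u = ex(1-e)$ satisfies $u^2 = 0$, and applying the hypothesis to $e+u$ and related perturbations forces a contradiction with the gcd/parity conditions. Next, for $z \in J(R)$ and arbitrary $x \in R$, expand $(x+z)^a - (x+z)^b$ modulo successive powers $J(R)^k$, using that $R/J(R)$ is commutative, to propagate $[x,z] \in J(R)^k$ for every $k$. Nilpotence of $J(R)$ in the residually-finite case, or a standard filtration/compactness argument in general, then yields $[x,z] = 0$, so $J(R) \subseteq Z(R)$, and combined with commutativity of $R/J(R)$ this gives commutativity of $R$.

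I expect the principal obstacle to be the infinite-dimensional division ring case in the first step. The pointwise hypothesis does not directly supply a polynomial identity, so Kaplansky's theorem and the machinery of Section \ref{Algorithm} cannot be applied to $D$ itself. Extracting uniform algebraicity from element-dependent exponents $(a(x),b(x))$ of unbounded size is the delicate point, and the opposite-parity and gcd constraints must be used carefully to block counterexamples of the type $\mathbb{F}_p\{x,y\}/(x,y)^3$ described earlier in the section, where the failure of the naive Herstein condition is exactly that the natural exponents $2p$ and $p$ are both divisible by $p$ and share that common factor. Making the block rigorous when $D$ is infinite-dimensional over its center is where the proof will require the most care.
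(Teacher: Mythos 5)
The statement you set out to prove is stated in the paper as an \emph{open conjecture}: the authors prove only the fixed-exponent version (the corollary to Theorem \ref{thm:Herstein}, deduced via the trichotomy of Theorem \ref{thm:trichotomy}) and explicitly leave the pointwise version, with $a$ and $b$ depending on $x$, unproved. So there is no proof in the paper to compare yours against, and your outline has to stand on its own.

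It does not, and the failure is not only in the places you flag. The first concrete error is in your treatment of $M_n(D)$: condition (2) reads ``either $b=1$ or [$\gcd(a,b)=1$ and opposite parity]'', so the clause $b=1$ carries no parity requirement, and for $x=e_{1,2}+e_{2,1}$ one may take $a=3$, $b=1$, getting $x^3-x=0\in Z(R)$; this element therefore rules out nothing. Worse, \emph{every} element of $M_2(\mathbb{F}_q)$ satisfies the hypothesis: a unit $x$ satisfies $x^{N+1}=x$ with $N=|\GL_2(\mathbb{F}_q)|$ (take $b=1$), while a singular $x$ satisfies $x^2=\mathrm{tr}(x)\,x$ by Cayley--Hamilton, whence either $x^2=0$ and $(a,b)=(4,3)$ works (coprime, opposite parity, $x^4-x^3=0$), or $x^{e+1}=x$ with $e$ the multiplicative order of $\mathrm{tr}(x)$ and $b=1$ works. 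The same degeneracy kills your second step: in the paper's own example $\mathbb{F}_p\{x,y\}/(x,y)^3$, every nilpotent $u$ satisfies $u^4-u^3=0$ with $\gcd(4,3)=1$ and opposite parity, and every unit satisfies $x^{|R^\times|+1}=x$ with $b=1$, so this noncommutative ring satisfies the hypothesis verbatim and no expansion modulo powers of $J(R)$ can extract $[x,z]=0$. In short, the conjecture as literally stated appears to be false, because nilpotent elements and torsion units admit degenerate choices of $(a,b)$ that make $x^a-x^b=0$; any correct formulation must exclude these (as the fixed-exponent identity version automatically does, since a single pair $(a,b)$ must then work for all elements simultaneously). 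Finally, even granting the reduction to division rings, your appeal to ``Jacobson's theorem on algebraic division rings'' is incorrect: $x^a-x^b\in Z(D)$ makes $x$ algebraic over $Z(D)$, not over the prime subfield, and division rings algebraic over their centres (e.g.\ the real quaternions) need not be commutative.
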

We give one last application of the algorithm described in \S\ref{Algorithm}.  Herstein \cite{Her2} considered rings $R$ for which the identity $(XY)^n=X^n Y^n$ holds for some $n\ge 2$.  In this case, he showed that the commutator ideal is necessarily nilpotent.
\begin{thm} Let $S\subseteq \mathbb{N}$.  Then there is a noncommutative ring satisfying the identities $(XY)^n=X^n Y^n$ for every $n\in S$ if and only if there exists a prime $p$ such that $p\mid {n\choose 2}$ for every $n\in S$.
\end{thm}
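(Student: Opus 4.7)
The plan is to invoke Theorem~\ref{thm:trichotomy} to reduce the problem to checking the identities $(XY)^n = X^n Y^n$ against rings from the three classes $U_p$, $B_{p,\ell,i}$, and $\mathcal{A}_p$; the converse will be witnessed by a single small ring drawn from the class $\mathcal{A}_p$.

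For the converse, suppose $p \mid \binom{n}{2}$ for every $n \in S$. The candidate ring is $R = \mathbb{F}_p\{U,V\}/(U^2,V^2,UV)$, which lies in $\mathcal{A}_p$ and has $\mathbb{F}_p$-basis $\{1, U, V, VU\}$. Writing $X = a + \alpha_X U + \beta_X V + \gamma_X VU$ and $Y = b + \alpha_Y U + \beta_Y V + \gamma_Y VU$ and expanding, the $1$-, $U$- and $V$-coordinates of $(XY)^n - X^n Y^n$ are always zero, while the $VU$-coordinate reduces to $(ab)^{n-1}\binom{n}{2}(\alpha_X \beta_Y - \alpha_Y \beta_X)$; this vanishes under the divisibility hypothesis, so $R$ is the desired noncommutative witness.

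For the forward direction, assume some noncommutative ring satisfies the identities. By Theorem~\ref{thm:trichotomy}, the identities are then satisfied by (a) some $U_p$, (b) some $B_{p,\ell,i}$, or (c) some $R \in \mathcal{A}_p$. Case (a) is immediate: taking $X = e_{11}$ and $Y = e_{12} + e_{22}$ in $U_p$ gives $(XY)^n = 0 \neq e_{12} = X^n Y^n$ whenever $n \geq 2$, so (a) arises only in the trivial situation. In case (c), $[R,R]^2 = 0$ and $[R,R]$ is central, so $Y^k X = X Y^k - k[X,Y]Y^{k-1}$, and induction on $n$ yields the exact identity
\[ (XY)^n = X^n Y^n - \binom{n}{2}\,[X,Y]\,X^{n-1} Y^{n-1}. \]
Specializing at $X = 1+u$ and $Y = 1+v$ for the two generators $u, v$ of $R$, and using that $(p, u, v)$ annihilates $[u,v]$ from both sides, the right side collapses to $-\binom{n}{2}[u,v]$, and since $p[u,v] = 0$ is the exact additive relation on the nonzero element $[u,v]$, the identity forces $p \mid \binom{n}{2}$.

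The delicate case is (b). Here $[R,R]^2 = 0$ still holds, but commutators are not central: instead $X c = c X^{p^i}$ for every $c \in [R,R]$. Decomposing $B_{p,\ell,i} = T \oplus L$ with $T \cong \mathbb{F}_{p^\ell}$ and $L = \mathbb{F}_{p^\ell}\cdot e_{12}$ carrying the Frobenius-twisted bimodule structure, and testing the identity on $X = 1 + e_{12}$ and $Y = \mathrm{diag}(y^{p^i}, y)$, one computes
\[ (XY)^n - X^n Y^n = y^n\Big(\textstyle\sum_{k=0}^{n-1} y^{k(p^i - 1)} - n\Big) e_{12}. \]
Setting $w = y^{p^i - 1}$ (which ranges over the subgroup $H$ of $\mathbb{F}_{p^\ell}^*$ of order $d = (p^\ell - 1)/(p^{\gcd(\ell,i)} - 1) > 1$) and reducing the polynomial $w^n - nw + (n-1)$ modulo $w^d - 1$ forces either $d \mid n$ together with $p \mid n$, or $d \mid n - 1$ together with $p \mid n - 1$. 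If $p$ is odd, then in either subcase $p \mid n(n-1)$ and hence $p \mid \binom{n}{2}$, so $p$ serves as the required prime. If $p = 2$, then $d$ is odd and strictly greater than $1$ (as $i < \ell$), so admits an odd prime factor $q$; since $q$ divides $n$ or $n - 1$ and is odd, $q \mid \binom{n}{2}$. In either situation the prime depends only on the ambient ring (not on $n$), so it works uniformly over all $n \in S$. The main obstacle is this $p = 2$ subcase, where the characteristic of the ring does not itself divide $\binom{n}{2}$, forcing one to extract an odd prime from $(p^\ell - 1)/(p^{\gcd(\ell,i)} - 1)$.
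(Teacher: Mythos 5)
Your proof is correct, and it follows the same high-level strategy as the paper: reduce via Theorem~\ref{thm:trichotomy} to the three classes $U_p$, $B_{p,\ell,i}$, $\mathcal{A}_p$ and treat each one. Two of your case arguments, however, are genuinely different. For sufficiency, the paper uses $\mathbb{F}_p\{X,Y\}/(X,Y)^3$ and the observation that $a^p$ (resp.\ $a^4$ when $p=2$) equals the scalar part of $a$; you use $\mathbb{F}_p\{U,V\}/(U^2,V^2,UV)$ and compute the $VU$-coordinate of $(XY)^n-X^nY^n$ directly --- your coefficient $(ab)^{n-1}{n\choose 2}(\alpha_X\beta_Y-\alpha_Y\beta_X)$ checks out, and both witnesses work. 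The most substantial divergence is in the $\mathcal{A}_p$ case: the paper passes to the unit group $1+J(R)$, invokes nilpotency of finite $p$-groups, and uses the commutator identity $(xy)^b=z^{b\choose 2}x^by^b$ in a class-two quotient, together with a reduction of $n$ modulo $p^m$; you instead prove the exact ring identity $(XY)^n=X^nY^n-{n\choose 2}[X,Y]X^{n-1}Y^{n-1}$ (valid whenever commutators are central and $[R,R]^2=(0)$) and specialize at $1+u$, $1+v$, using that $(p,u,v)$ annihilates $[u,v]$. This is shorter, avoids the group theory entirely, and as a bonus re-proves the sufficiency direction for any ring in $\mathcal{A}_p$. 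In case (b) your $(1,2)$-entry computation is essentially the paper's, but you organize the endgame differently: reducing $w^n-nw+(n-1)$ modulo $w^d-1$ extracts both $d\mid n(n-1)$ and $p\mid n(n-1)$ at once, so odd $p$ is immediate and only $p=2$ requires pulling an odd prime out of $d=(p^\ell-1)/(p^{\gcd(\ell,i)}-1)$, whereas the paper front-loads the observation that this quantity is a multiple of an odd prime or of $4$. The only cosmetic gap is your dismissal of case (a) as ``the trivial situation'': it would be worth one explicit sentence saying that if $S\subseteq\{0,1\}$ then ${n\choose 2}=0$ for all $n\in S$ and the equivalence holds vacuously, so one may assume $S$ contains some $n\ge 2$, which rules out $U_p$.
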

\begin{proof}
Suppose first that there is a prime $p$ such that ${n\choose 2}$ is a multiple of $p$ for every $n\in S$.  Consider the noncommutative ring $R:=\mathbb{F}_p\{X,Y\}/(X,Y)^3$. Then there is a homomorphism $\phi :R \to \mathbb{F}_p$ such that for each $a\in R$ we have $a=\alpha(a)+j(a)$, where $j(a)$ is in the Jacobson radical of $R$.  In particular, since $J(R)^3=(0)$, we have $a^p =\alpha(a)^p=\alpha(a)$ when $p\ge 3$ and we have $a^4=\alpha(a)$ when $p=2$.  If $n\in S$ then by assumption ${n\choose 2}$ is a multiple of $p$.  Thus $p|n$ or $p|(n+1)$ when $p$ is odd and $n\equiv 0,1~(\bmod~ 4)$ when $p=2$.  In either case, we see that
$a^n b^n =(ab)^n$ since the left- and right-hand sides are both either $\alpha(ab)^{n/p}$ or $\alpha(ab)^{(n-1)/p} ab$, depending on whether $n$ is a multiple of $p$ or is $1$ mod $p$.

Next suppose that for every prime $p$ there is some $n\in S$ such that ${n\choose 2}$ is not a multiple of $p$ and suppose to the contrary that there is a noncommutative ring $R$ with the property that $(ab)^n =a^n b^n$ for all $a,b\in R$ and all $n\in S$.  Then there is some prime $p$ such that the identities $(XY)^n =X^n Y^n$, for $n\in S$, hold in one of the rings given in the statement of Theorem \ref{thm:trichotomy}.  Notice that such an identity cannot hold in $U_p$, since the elements $a=e_{1,1}$ and $b=e_{1,2}+e_{2,2}$ are both idempotent and so $a^n b^n = ab = e_{1,2}$ while $(ab)^n = 0$ for $n\ge 2$.
We next consider the rings $B_{p,n,i}$.  Let $m=(p^n-1)/\gcd(p^n-1,p^i-1)$.  Then $m\ge (p^n-1)/(p^{n/2}-1)\ge p+1$ and so $m$ is either a multiple of an odd prime or a multiple of $4$.  Then by assumption there is some $n$ such that $n\not\equiv 0,1~(\bmod ~m)$.  We pick such an $n\in S$ and write $n=m n_0+b$ with $b\in \{2,\ldots ,m-1\}$.  For $\lambda\in \mathbb{F}_q$ we let
\[a_{\lambda}=\left( \begin{array}{cc} \lambda^{p^i} & 0 \\ 0 & \lambda\end{array}\right) \] and
\[b_{\lambda}=\left( \begin{array}{cc} \lambda^{p^i} & 1 \\ 0 & \lambda\end{array}\right). \]
Then by assumption
\begin{equation}
\label{eq:ablam}
a_{\lambda}^n b_{\lambda}^n =(a_{\lambda}b_{\lambda})^n.
\end{equation}
Then for $\lambda$ such that $\lambda^{p^i}\neq \lambda$, computing the $(1,2)$-entry of both sides of Equation (\ref{eq:ablam}) gives
$$\lambda^{p^i n}\cdot \left(\frac{\lambda^{p^i n} - \lambda^n}{\lambda^{p^i}-\lambda}\right) = \lambda^{p^i}\cdot  \left(\frac{\lambda^{2p^i n} - \lambda^{2n}}{\lambda^{2p^i}-\lambda^2}\right).$$
Then a simple computation shows that this holds only when either $\lambda^{(p^i-1) n}=1$ or
$1 = \lambda^{(p^i-1) (n-1)}$.  Since $n\not\equiv 0,1~(\bmod ~m)$, we see there is some $\lambda\in \mathbb{F}_q$ with $\lambda^{p^i}\neq \lambda$ such that $\lambda^{(p^i-1) n}\neq 1$ and
$\lambda^{(p^i-1) (n-1)}\neq 1$, contradicting the fact that the identity $(XY)^n =X^n Y^n$ holds in $B_{p,n,i}$.

Finally, we consider rings in the class $\mathcal{A}_p$. So suppose that $(XY)^n=X^n Y^n$, for each $n\in S$, is an identity for a ring $R$ in $\mathcal{A}_p$ for some prime $p$.  Then by assumption there is some $n\in S$ such that $p\nmid {n\choose 2}$.  Then there is some smallest $k\ge 1$ such that $p^k =0$ in $R$.  Let $\alpha: R\to \mathbb{Z}/p^k \mathbb{Z}$ be the surjection obtained by reducing modulo the nilpotent radical.  Then if $a\in R$ we have $a=\alpha(a)+x(a)$ for some $x(a)\in J(R)$.  Moreover, there is some fixed $m$ such that $x^{p^m}=0$ for every $x\in J(R)$ and $m\ge 2$ if $p=2$.  Since $n\not\equiv 0,1~(\bmod ~p^m)$, we can write $n=p^m n_0+b$ with $b\in \{2,\ldots ,p^m-1\}$.  Then for $u\in R$ we have $u^n = u^{p^m n_0+b} = \alpha(u)^{p^m n_0} u^b$.  Thus if $G$ is the subgroup of $R^*$ consisting of elements of the form $1+x$ with $x\in J(R)$, we have $x^b y^b = (xy)^b$ for all $x,y\in G$.  Notice, however, that $G$ is a finite $p$-group and hence is nilpotent.  Moreover, $G$ generates $R$ as a $\mathbb{Z}$-algebra and since $R$ is noncommutative, $G$ must be a nonabelian nilpotent group.  In particular, $G$ has a normal subgroup $N$ such that $H:=G/N$ has the property that $H/Z(H)$ is abelian, where $Z(H)$ is the centre of $H$.  It follows that there exist $x,y\in H$ such that $yx=zxy$ with $z\in Z(H)$.  Since $x^b y^b=(xy)^b$ in $G$ this holds in $H$ and so $x^b y^b = (xy)^b = z^{b\choose 2} x^b y^b$.  Since $H$ is a $p$-group, ${b\choose 2}$ must be a multiple of $p$.  Since ${n\choose 2}\equiv {b\choose 2}~(\bmod ~p)$, we see that $p\mid {n\choose 2}$, a contradiction.
\end{proof}
\begin{remark} Herstein \cite{Her2} also considered the case of identities of the form $$(X+Y)^n=X^n+Y^n.$$ Here the answer is simpler for which sets $S$ of natural numbers allow there to be a noncommutative ring such that $(X+Y)^n=X^n+Y^n$ for every $n\in S$.  This can only be the case if there is a prime $p$ such that $S\subseteq T_p:=\{p,p^2,p^3,\ldots \}$ when $p$ is odd; and $S\subseteq T_2:=\{4,8,\ldots \}$.  To see this, observe that these identities hold for all $n\in T_p$ for the ring $\mathbb{F}_p\{X,Y\}/(X,Y)^3$.  On the other hand, if $(X+Y)^n=X^n+Y^n$ holds for a ring occurring in the statement of Theorem \ref{thm:trichotomy}, it must also hold on $\mathbb{F}_p$ for some prime $p$, since this is always a subring of a homomorphic image of these rings.  But then it is easily checked that this forces $n$ to be a power of this fixed prime.  In the special case $p=2$, the identity $(X+Y)^2=X^2+Y^2$ forces a ring to be commutative.
\end{remark}
\section{Multilinear identities that force commutativity}
\label{sec:multilinear}
In this brief section we give a proof of a general result that in particular implies Theorem \ref{thm:multi} when we take $\mathcal{S}$ below to comprise a single identity.

\begin{thm}
Let $\mathcal{S}$ be a set of homogeneous multilinear polynomials with integer coefficients. Then there is a noncommutative ring $R$ for which every element of $\mathcal{S}$ is an identity if and only if there is some fixed prime $p$ such that whenever $P(X_1,\ldots ,X_m)=\sum_{\sigma \in S_m} c_{\sigma}X_{\sigma(1)}\cdots X_{\sigma(m)} \in \mathbb{Z}\{X_1,\ldots ,X_m\}$ is an element of $\mathcal{S}$
the following hold:
\begin{enumerate}
\item $p\mid P(1,1,\ldots ,1)$;
\item $p\mid \Theta_{i,j}(P)$ for $1\le i<j\le m$,
\end{enumerate}
where the $\Theta_{i,j}(P)$ are as defined in Equation (\ref{eq:Theta}).
 Moreover, if there is such a prime $p$ for which these conditions hold, then every element of $\mathcal{S}$ is an identity for the noncommutative ring $\mathbb{F}_p\{U,V\}/(U^2,V^2,UV)$.  
 \label{thm:multi2}
\end{thm}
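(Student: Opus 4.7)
The core tool is a single substitution identity for a multilinear $P$: setting $X_k = a_k + y_k$ with $a_k \in \mathbb{Z}$ and $y_k$ in an ideal $J$ of a ring $R$, the expansion of $P(X_1,\ldots,X_m)$ has constant term $P(1,\ldots,1)\prod_r a_r$, single-$y$ coefficient $P(1,\ldots,1)\prod_{r\neq k}a_r$ at each $y_k$, and for $i<j$ the coefficient of $y_iy_j$ is $\Theta_{i,j}(P)\prod_{r\neq i,j}a_r$ while that of $y_jy_i$ is $(P(1,\ldots,1)-\Theta_{i,j}(P))\prod_{r\neq i,j}a_r$. In particular, taking $a_k=1$ for $k\neq i,j$ and $a_i=a_j=0$ recovers the specialization $P_{i,j} = \Theta_{i,j}(P)X_iX_j + (P(1,\ldots,1)-\Theta_{i,j}(P))X_jX_i$ already noted after the statement of Theorem \ref{thm:multi}. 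Both directions of the theorem then follow by pressing this formula against the structure of the relevant rings.

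For the ``moreover'' direction I would take $R := \mathbb{F}_p\{U,V\}/(U^2,V^2,UV)$ and check directly that $R$ has $\mathbb{F}_p$-basis $\{1,U,V,VU\}$, with $J(R)$ spanned by $U,V,VU$, $J(R)^2 = \mathbb{F}_p\cdot VU$ (central), and $J(R)^3=0$. Writing each $X_k = \alpha_k + y_k$ with $\alpha_k\in\mathbb{F}_p$ and $y_k\in J(R)$, only the constant, single-$y$, and two-$y$ contributions survive in the expansion above; the first two vanish modulo $p$ by (1), and each two-$y$ contribution lies in $\mathbb{F}_p\cdot VU$ with coefficient $\Theta_{i,j}(P)$ or $P(1,\ldots,1)-\Theta_{i,j}(P)$ times an element of $\mathbb{F}_p$, both of which are zero mod $p$ by (1) and (2). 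Hence every $P\in\mathcal{S}$ is an identity for $R$.

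For the reverse direction, suppose a noncommutative ring satisfies $\mathcal{S}$. Theorem \ref{thm:trichotomy} produces a single prime $p$ and a ring $R$ from one of the classes $U_p$, $B_{p,n,i_0}$, or $\mathcal{A}_p$ that satisfies every identity in $\mathcal{S}$. Each such ring has characteristic a power of $p$, so substituting $X_k=1$ yields $P(1,\ldots,1)\cdot 1=0$, giving $p\mid P(1,\ldots,1)$ and proving (1). For (2), fix $i<j$ and set $X_k=1$ for $k\neq i,j$, so that $P$ evaluates to $P_{i,j}$. In $U_p$, substitute $X_i=e_{1,1}$, $X_j=e_{1,2}$, so that $X_iX_j=e_{1,2}$ and $X_jX_i=0$, forcing $\Theta_{i,j}(P)e_{1,2}=0$ and hence $p\mid\Theta_{i,j}(P)$. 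In $B_{p,n,i_0}$, substitute $X_i=\mathrm{diag}(a^{p^{i_0}},a)$ for any $a\in\mathbb{F}_{p^n}$ with $a^{p^{i_0}}\neq a$ (which exists because the fixed field of the Frobenius $x\mapsto x^{p^{i_0}}$ on $\mathbb{F}_{p^n}$ is proper) and $X_j=e_{1,2}$; using (1), $P_{i,j}$ collapses to $\Theta_{i,j}(P)(a^{p^{i_0}}-a)e_{1,2}=0$, again forcing $p\mid\Theta_{i,j}(P)$.

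The main obstacle will be the case $R\in\mathcal{A}_p$, where $R$ may have characteristic $p^k$ with $k>1$ and $P(1,\ldots,1)$ is only guaranteed to vanish mod $p^k$. The key observation is that every commutator in $R\in\mathcal{A}_p$ has additive order dividing $p$: since $R$ is a homomorphic image of $\mathbb{Z}\{x,y\}/(I+J_n)$ with $p[x,y]\in I$, and since $R$ is two-generated, the commutator ideal $[R,R]$ coincides with the two-sided ideal generated by $[\bar x,\bar y]$, so $p\cdot [R,R]=0$. Picking noncommuting $u,v\in R$ and applying $P_{i,j}$ at $(u,v)$, the equation rewrites as $\Theta_{i,j}(P)[u,v]+P(1,\ldots,1)\cdot vu=0$; the second term vanishes because $P(1,\ldots,1)\cdot 1=0$ in $R$, leaving $\Theta_{i,j}(P)[u,v]=0$, and since $[u,v]\neq 0$ has additive order exactly $p$, we conclude $p\mid\Theta_{i,j}(P)$, completing the argument.
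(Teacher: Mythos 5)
Your proposal is correct and follows essentially the same route as the paper: Theorem \ref{thm:trichotomy} plus the specialization $P_{i,j}=\Theta_{i,j}(P)X_iX_j+(P(1,\ldots,1)-\Theta_{i,j}(P))X_jX_i$ for necessity, and a direct check on $\mathbb{F}_p\{U,V\}/(U^2,V^2,UV)$ using $J(R)^3=(0)$ for sufficiency. The only difference is that your case-by-case treatment of the three classes in the necessity direction is unnecessary: in each class the characteristic is a power of $p$ and $P(1,\ldots,1)=0$ in $R$, so $\Theta_{i,j}(P)$ annihilates a nonzero commutator and hence cannot be a unit modulo the characteristic, which gives $p\mid\Theta_{i,j}(P)$ uniformly.
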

\vskip 2mm
\begin{proof} If the elements of $\mathcal{S}$ are identities for a noncommutative ring $R$ then by Theorem \ref{thm:trichotomy} there is a prime $p$ such that there is a ring $R$ from one of the three classes of rings associated to the prime $p$ from the statement of the theorem for which each element of $\mathcal{S}$ is an identity.  Then if $P(X_1,\ldots ,X_m)\in \mathcal{S}$, $P(1,1,\ldots ,1)=0$ in $R$ and so $p\mid P(1,1,\ldots ,1)$.  Notice that if $i<j$ and we specialize $P(X_1,\ldots, X_s)$ taking $X_k=1$ for $k\neq \{i,j\}$ and $X_i=r$ and $X_j=s$ with $r,s\in R$ such that $[r,s]\neq 0$ then $P(X_1,\ldots ,X_s)$ becomes $\Theta_{i,j}(P)rs + (P(1,1,\ldots ,1)-\Theta_{i,j}(P))sr$.  Then since $P(1,1,\ldots ,1)=0$ in $R$ and $rs-sr\neq 0$ we see that $\Theta_{i,j}(P)$ annihilates $rs-sr$ and hence it must have non-trivial gcd with the characteristic of $R$, which is a power of $p$.  It follows that $p\mid \Theta_{i,j}(R)$ whenever $i<j$. Thus we see the necessity of these conditions.

Now suppose that there exists a prime $p$ such that whenever $P(X_1,\ldots ,X_m)\in \mathcal{S}$, we have $p\mid P(1,1,\ldots ,1)$ and $p\mid \Theta_{i,j}(P)$ whenever $i<j$.  Consider the ring $S:=\mathbb{F}_p\{U,V\}/(U^2,V^2,UV)$ and let $u$ and $v$ denote the images of $U$ and $V$ in $S$ respectively. Then $S$ is a noncommutative $4$-dimensional $\mathbb{F}_p$-algebra with basis $\mathcal{T}=\{1,u,v,vu\}$. We claim that $P$ is an identity for $S$.  To see this, since $P$ is multilinear, it suffices to show that $P$ vanishes whenever it is evaluated at $s$-tuples in $\mathcal{T}^s$.  Moreover, if $z_1,\ldots ,z_m$ in $S$ commute then $P(z_1,\ldots ,z_m)=P(1,1,\ldots ,1)z_1\cdots z_m =0$ and since $vu$ and $1$ are central in $S$, we then see that it suffices to consider $s$-tuples in $\mathcal{T}^s$ with at least one copy of $u$ and at least one copy of $v$.  Moreover, since $(u,v)^3=(0)$, we now see it suffices to consider $s$-tuples with exactly one copy of $u$, exactly one copy of $v$, and all other elements equal to $1$.  If we take $i<j$ and $X_i=u$, $X_j=v$, and $X_k=1$ for $k\neq i,j$ then 
when we specialize $P$ at these values of $X_1,\ldots ,X_m$ we obtain $\Theta_{i,j}(P)[u,v] =0$, since $p\mid \Theta_{i,j}(P)$.  Thus $P$ vanishes at all $s$-tuples in $\mathcal{T}^s$ and so it is a polynomial identity for $S$. The result follows.
\end{proof}

To give an example of how to apply Theorem \ref{thm:multi2}, observe that if $m=3$ and $$P(X_1,X_2,X_3) = \sum_{\sigma\in S_3} X_{\sigma(1)}X_{\sigma(2)}X_{\sigma(3)},$$ then $P(1,1,1)=6$ and $\Theta_{i,j}(P) =3$ for $1\le i<j\le 3$.  Then we see that the conditions in the statement of the theorem are satisfied with the prime $p=3$ and $P=0$ is an identity for the ring $\mathbb{F}_3\{U,V\}/(U^2,V^2,UV)$, which has $81$ elements.  
\vskip 2mm
 In light of Theorem \ref{thm:multi2}, it is natural to ask for a finite set of identities that generate the identities for the ring $\mathbb{F}_p\{U,V\}/(U^2,V^2, UV)$, since these are the identities that do not help in the context of proving commutativity for multilinear identities.  
 
Notice that 
\begin{equation}\label{eq:idents}
\begin{array}{ll}
(Z_1^p-Z_1)Z_2(Z_3^p-Z_3)Z_4(Z_5^p-Z_5)=0 & [[Z_1,Z_2],Z_3]=0 \\
(Z_1^p-Z_1)Z_2 [Z_3,Z_4]=0 & [Z_1,Z_2]Z_3[Z_4,Z_5]=0 \\
{[}Z_1,Z_2] Z_3(Z_4^p-Z_4)=0 & p=0.
\end{array}
\end{equation}
are polynomial identities for
$\mathbb{F}_p\{U,V\}/(U^2,V^2,UV)$.  We point out this is not a minimal set: for example, one can deduce 
the identity  $[Z_1,Z_2]Z_3 (Z_4^p -Z_4)=0$ from the identities 
$[[Z_1,Z_2],Z_3]=0$ and $(Z_1^p-Z_1)Z_2 [Z_3,Z_4]=0$.  We choose, however, to work with this set of identities, as it is convenient to work with.  
The following result shows that these identities generate the identities for $\mathbb{F}_p\{U,V\}/(U^2,V^2,UV)$. 
\begin{prop} Let $p$ be a prime number.  Then every polynomial identity for $\mathbb{F}_p\{U,V\}/(U^2,V^2,UV)$ is generated by the identities given in Equation (\ref{eq:idents}).
\label{prop:gen}
\end{prop}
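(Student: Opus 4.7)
The plan is to reduce every polynomial modulo $I$ (the $T$-ideal generated by the identities in (\ref{eq:idents})) to a canonical form, and then show that any canonical form which vanishes on $R := \mathbb{F}_p\{U,V\}/(U^2,V^2,UV)$ must already be zero modulo $I$.

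To set up the canonical form, I work in $\mathbb{Z}\{X_1,\ldots,X_s\}/I$. The relations $p=0$, $[[X_a,X_b],X_c]=0$, and $[X_a,X_b][X_c,X_d]=0$ together allow sorting of any monomial $X_{i_1}\cdots X_{i_k}$ via $X_aX_b=X_bX_a+[X_a,X_b]$: commutator corrections are central, and a product of two of them vanishes, so all secondary commutators consolidate into single commutator terms. Thus every $P\in\mathbb{Z}\{X_1,\ldots,X_s\}$ may be put in the form
\[
P \equiv A(X_1,\ldots,X_s) + \sum_{1\le i<j\le s} A_{ij}(X_1,\ldots,X_s)\,[X_i,X_j] \pmod{I},
\]
with $A,A_{ij}\in\mathcal{C}_s$. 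Using $(X_k^p-X_k)Y[X_i,X_j]=0$, the $A_{ij}$ may be further reduced modulo $\mathfrak{n}:=(X_1^p-X_1,\ldots,X_s^p-X_s)$; and specializing the five-variable identity at $Z_2=Z_4=1$ shows $(X_{i_1}^p-X_{i_1})(X_{i_2}^p-X_{i_2})(X_{i_3}^p-X_{i_3})\in I$, so $A$ may be reduced modulo $\mathcal{J}_0:=\mathfrak{n}^3$.

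Now suppose $P$ is an identity for $R$. Evaluate $P$ at $X_k=\alpha_k+r_k$ with $\alpha_k\in\mathbb{F}_p$ and $r_k:=a_ku+b_kv+c_kvu\in J(R)$. A direct computation using $u^2=v^2=uv=0$ gives $r_k^2=a_kb_k\,vu$ and, for $k<l$, $r_kr_l=b_ka_l\,vu$; since $J(R)^3=0$ the expansion truncates at second order, yielding
\[
A(\alpha+r)=A(\alpha)+\sum_k(\partial_kA)(\alpha)r_k+\sum_k(H_{2e_k}A)(\alpha)r_k^2+\sum_{k<l}(\partial_k\partial_lA)(\alpha)r_kr_l,
\]
where $H_{2e_k}A$ is the Hasse partial (the coefficient in $X_k^{n_k-2}\prod_{l\neq k}X_l^{n_l}$ is $\binom{n_k}{2}$). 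The commutator piece evaluates to $\sum_{i<j}A_{ij}(\alpha)(b_ia_j-b_ja_i)\,vu$ because $[X_i,X_j]=[r_i,r_j]=(b_ia_j-b_ja_i)vu$. Reading off coefficients in the $\mathbb{F}_p$-basis $\{1,u,v,vu\}$ of $R$ and forcing each to vanish identically in the parameters yields
\[
A(\alpha)=(\partial_kA)(\alpha)=(H_{2e_k}A)(\alpha)=(\partial_i\partial_jA)(\alpha)=A_{ij}(\alpha)=0
\]
for all $\alpha\in\mathbb{F}_p^s$ and all relevant indices; the $A_{ij}$ constraint is isolated by examining the coefficient of $a_Kb_I$ with $I<K$, after which the $\partial_i\partial_j$ constraint falls out of the coefficient of $a_Ib_K$.

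Finally, $A_{ij}\equiv 0$ on $\mathbb{F}_p^s$ forces $A_{ij}\in\mathfrak{n}$, so the commutator part vanishes in the reduced canonical form. The four conditions on $A$ say precisely that every Hasse partial derivative of $A$ of order at most $2$ vanishes at every $\alpha\in\mathbb{F}_p^s$, equivalently $A\in\mathfrak{m}_\alpha^3$ for each maximal ideal $\mathfrak{m}_\alpha=(X_1-\alpha_1,\ldots,X_s-\alpha_s)$. Since the $\mathfrak{m}_\alpha$ are pairwise comaximal in $\mathbb{F}_p[X_1,\ldots,X_s]$, the Chinese remainder theorem gives $\bigcap_\alpha \mathfrak{m}_\alpha^3 = \prod_\alpha \mathfrak{m}_\alpha^3 = (\prod_\alpha\mathfrak{m}_\alpha)^3=(\bigcap_\alpha\mathfrak{m}_\alpha)^3=\mathfrak{n}^3=\mathcal{J}_0$, so $A\in\mathcal{J}_0$ and $A$ also vanishes in its reduced form. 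Therefore $P\in I$, as required. The main obstacle is the bookkeeping in the evaluation step: one must track the asymmetric product $r_kr_l=b_ka_l\,vu$ (which depends on the order $k<l$), identify the contributions of $H_{2e_k}A$, $\partial_i\partial_jA$, and $A_{ij}$ inside the $vu$-coefficient, and separate the constraints on $A$ from those on the $A_{ij}$ by making coordinated choices of the $(a,b)$ parameters.
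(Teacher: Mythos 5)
Your argument is correct and lands in the same place as the paper's---both proofs ultimately show that the ``commutative part'' of the identity lies in $\mathfrak{n}^3$ and each commutator coefficient lies in $\mathfrak{n}$, where $\mathfrak{n}=(X_1^p-X_1,\ldots,X_s^p-X_s)$---but the route is genuinely different. The paper peels off factors of $(X_i^p-X_i)$ from $\Phi(P)$ iteratively, using a sequence of targeted specializations ($X_i\mapsto\lambda_i+vu$, then $X_i\mapsto\lambda_i+u$ and $X_j\mapsto\lambda_j+v$, then $X_i\mapsto\lambda_i+u+v$) and invoking Lemma \ref{lem:Alon} at each stage. You instead make one universal substitution $X_k\mapsto\alpha_k+a_ku+b_kv+c_kvu$, Taylor-expand to order two with Hasse derivatives, and extract all constraints simultaneously as ``every Hasse derivative of $A$ of order at most $2$ vanishes on $\mathbb{F}_p^s$,'' converting this to $A\in\mathfrak{n}^3$ via comaximality of the ideals $\mathfrak{m}_\alpha^3$. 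This is an attractive conceptual packaging---it identifies the $T$-ideal of $R$ as ``order-three vanishing at all $\mathbb{F}_p$-points plus the commutator structure identities''---at the price of the CRT step and heavier bookkeeping in the $vu$-coefficient.

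Two small repairs are needed, neither fatal. First, the bilinear coefficients are swapped: the coefficient of $a_Ib_K$ with $I<K$ (the $a$-variable carrying the \emph{smaller} index) is $-A_{IK}(\alpha)$, which is what isolates the $A_{ij}$ constraint, while the coefficient of $a_Kb_I$ is $\partial_I\partial_KA(\alpha)+A_{IK}(\alpha)$; both constraints still follow. Second, reducing $A$ modulo $\mathfrak{n}^3$ inside $\mathcal{C}_s$ is not free of commutator debris: the sorted lift of an element of $\mathfrak{n}^3$ differs from an honest product $(X_i^p-X_i)(X_j^p-X_j)(X_k^p-X_k)\hat{B}$ (which is in $I$ by the five-variable identity) by an element of $[S,S]$, which must be rewritten via centrality of commutators and absorbed into the $A_{ij}$. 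So the reduction of $A$ modulo $\mathfrak{n}^3$ has to be performed \emph{before} the $A_{ij}$ are reduced modulo $\mathfrak{n}$ (or one iterates); as written you do the reductions in the opposite order. With that ordering fixed, the final step---$A_{ij}\in\mathfrak{n}$ forces $A_{ij}[X_i,X_j]\in I$ via $(Z_1^p-Z_1)Z_2[Z_3,Z_4]=0$ and $[Z_1,Z_2]Z_3[Z_4,Z_5]=0$, and $A\in\mathfrak{n}^3$ forces $A\in I$ modulo already-handled commutator terms---goes through and yields $P\in I$.
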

\begin{proof} 
We make use of the notation from Section \ref{sec:dec} in this proof.
Let $$P(X_1,\ldots ,X_s)\in \mathbb{Z}\{X_1,\ldots ,X_s\}$$ be an identity for $R:=\mathbb{F}_p\{U,V\}/(U^2,V^2,UV)$ and let $u$ and $v$ denote the images of $U$ and $V$ respectively in $R$.  
Since we have the identity $p=0$ at our disposal, we may work over $\mathbb{F}_p\{X_1,\ldots ,X_s\}$ instead, so we assume now that $P\in \mathbb{F}_p\{X_1,\ldots ,X_s\}$ and we adjust $\mathcal{C}_s$ and the maps $\Phi$ from items (\ref{eq:C}) and (\ref{eq:Phi}) to reflect that our base is now $\mathbb{F}_p$.  Given two identities $P_1$ and $P_2$, we'll write $P_1\equiv P_2$ if the identity $P_1-P_2$ is implied by the identities in Equation (\ref{eq:idents}).

Our goal is to show that the identity $P=0$ is implied by the identities in Equation (\ref{eq:idents}).  
Then since $R/([R,R])\cong \mathbb{F}_p$, 
$\Phi(P)$ is an identity for $\mathbb{F}_p$. Hence by Lemma \ref{lem:Alon}, 
$\Phi(P)\in (X_1^p-X_1,\ldots ,X_s^p-X_s)\mathbb{F}_p[X_1,\ldots ,X_s]$.  
So we have
$$\Phi(P) = (X_1^p-X_1)A_1+\cdots + (X_s^p-X_s)$$ for some $A_1,\ldots, A_s\in \mathbb{F}_p[X_1,\ldots ,X_s]$.
Then notice that since $P$ is an identity for $R$, $\Phi(P)$ must be an identity for every commutative subring of $R$.
Thus $\Phi(P)$ is an identity for $\mathbb{F}_p[vu]=\mathbb{F}_p\oplus \mathbb{F}_p\cdot vu$.
Then if we take $(\lambda_1,\ldots ,\lambda_s)\in \mathbb{F}_p^s$ and we specialize $X_j = \lambda_j$ for $j\neq i$ and
$X_i=\lambda_i+vu$, then $X_j^p-X_j$ becomes zero for $j\neq i$ and $X_i^p-X_i$ becomes $-vu$.  Then the fact that $\Phi(P)$ is an identity for $\mathbb{F}_p[uv]$ gives
$$-vu A_i(\lambda_1,\ldots ,\lambda_s)=0$$ for every $(\lambda_1,\ldots ,\lambda_s)\in \mathbb{F}_p^s$.  Hence
$A_1,\ldots ,A_s$ are also identities for $\mathbb{F}_p$.  Thus we can in fact write $\Phi(P)$ as
$$\sum_{1\le i\le j\le s} (X_i^p-X_i)(X_j^p-X_j)Q_{i,j}$$ with the $Q_{i,j}\in  \mathbb{F}_p[X_1,\ldots ,X_s]$.
Since $\Phi(P-\bar{P})=0$, we then have that there are $\hat{Q}_{i,j}\in \mathcal{C}_s$ for $1\le i\le j\le s$ with 
$\Phi(\hat{Q}_{i,j})=Q_{i,j}$ and so 
$$P-\sum_{1\le i\le j\le s} (X_i^p-X_i)(X_j^p-X_j)\hat{Q}_{i,j} $$ is in the commutator ideal of 
$\mathbb{F}_p\{X_1,\ldots ,X_s\}$.  Hence
\begin{equation}
P=\sum_{1\le i\le j\le s} (X_i^p-X_i)(X_j^p-X_j)\hat{Q}_{i,j} + \sum_{1\le i<j\le s} \sum_{k=1}^{m_{i,j}} 
B_{i,j,k} [X_i,X_j] C_{i,j,k},
\end{equation} for some integers $m_{i,j}$ and polynomials
$B_{i,j,k},C_{i,j,k}\in \mathbb{Z}\{X_1,\ldots, X_s\}$ for $i$ and $j$ with $1\le i<j\le s$ and $k=1,\ldots ,m_{i,j}$.
Since $[[Z_1,Z_2], Z_3]=0$ is an identity in Equation (\ref{eq:idents}), we can reduce our expression for $P$ modulo these identities and we have
$$\sum_{k=1}^{m_{i,j}} B_{i,j,k} [X_i,X_j] C_{i,j,k} \equiv [X_i,X_j] \left(\sum_{k=1}^{m_{i,j}} B_{i,j,k} C_{i,j,k}\right).$$
 
Thus if we let $$D_{i,j}=\sum_{k=1}^{m_{i,j}} B_{i,j,k} C_{i,j,k}$$ for $1\le i<j\le s$, we see we may assume that our identity $P$ is of the form
\begin{equation}
\label{eq:newP}
\sum_{1\le i\le j\le s} (X_i^p-X_i)(X_j^p-X_j)\hat{Q}_{i,j} + \sum_{1\le i<j\le s} [X_1,X_j] D_{i,j}.
\end{equation}
Moreover, since $[Z_1,Z_2]Z_3[Z_4,Z_5]=0$ is an identity in Equation (\ref{eq:idents}), we may again work modulo the equivalence above and assume without loss of generality that each $D_{i,j}\in \mathcal{C}_s$.

We now fix $i$ and $j$ with $i<j$.  We let $(\lambda_1,\ldots ,\lambda_s)\in \mathbb{F}_p$ and we specialize our variables with
$X_k = \lambda_k$ for $k\neq i,j$, $X_i=\lambda_i+u$, $X_j=\lambda_j+v$.  Then for $k\le \ell$,
$(X_k^p-X_k)(X_{\ell}^p-X_{\ell})$ becomes zero unless $(k,\ell)=(i,j)$, but in this case it becomes $-uv$, which is also zero.  On the other hand, $[X_k,X_{\ell}]$ becomes zero under this specialization unless $(k,\ell)=(i,j)$ and
$[X_i,X_j]$ becomes $-vu\neq 0$.  Then since $(u,v)^3=(0)$, we see that under this specialization 
Equation (\ref{eq:newP}) becomes
$-vu D_{i,j}(\lambda_1,\ldots ,\lambda_s)$, and so $D_{i,j}$ is an identity for $\mathbb{F}_p$. Then Lemma \ref{lem:Alon}, gives that $D_{i,j}$ is in the ideal generated by $X_k^p-X_k$ for $1\le k\le s$ along with the commutators $[X_k,X_{\ell}]$ for $1\le k<\ell\le s$.  But this means that 
$[X_i,X_j]D_{i,j}$ is an identity for $R$ and that it is implied by the identities $[Z_1,Z_2]Z_3[Z_4,Z_5]=0$ and $[Z_1,Z_2]Z_3 (Z_4^p-Z_4)=0$ given in Equation (\ref{eq:idents}) for $1\le i<j\le s$.  Thus we can further reduce modulo our equivalence and assume that $P$ is of the form
$$\sum_{1\le i\le j\le s} (X_i^p-X_i)(X_j^p-X_j)\hat{Q}_{i,j}.$$
Now we fix $i$ and $j$ with $1\le i<j\le s$.  For $(\lambda_1,\ldots ,\lambda_s)\in \mathbb{F}_p$, we specialize 
$X_k=\lambda_k$ for $k\neq i,j$ and $X_i=\lambda_i+v$, $X_j=\lambda_j+u$.
Then under this specialization $(X_i^p-X_i)(X_j^p-X_j)$ becomes $vu\neq 0$ but 
$(X_k^p-X_k)(X_{\ell}^p-X_{\ell})$ becomes zero for $1\le k\le \ell\le s$ and $(k,\ell)\neq (i,j)$ (the case when $k=\ell=i$ and $k=\ell=j$ follow from the fact that both $u^2$ and $v^2$ are zero in $R$).  

Thus 
$$\sum_{1\le i\le j\le s} (X_i^p-X_i)(X_j^p-X_j)\hat{Q}_{i,j}$$ specializes to
$vu\hat{Q}_{i,j}(\lambda_1,\ldots ,\lambda_s)$, and so $\hat{Q}_{i,j}$ is an identity for $\mathbb{F}_p$ for $i<j$. Thus Lemma \ref{lem:Alon} gives that $\hat{Q}_{i,j}$ is in the ideal $X_k^p-X_k$ for $1\le k\le s$ along with the commutators $[X_k,X_{\ell}]$ for $1\le k<\ell\le s$.  Again, since
$$(Z_1^p-Z_1)Z_2(Z_3^p-Z_3)Z_4(Z_5^p-Z_5)=0 ~~{\rm and}~~(Z_1^p-Z_1)Z_2[Z_3,Z_4]=0$$ are identities in Equation (\ref{eq:idents}), we see that $ (X_i^p-X_i)(X_j^p-X_j)\hat{Q}_{i,j}$ is an identity for $R$ and that it is implied by the identities in Equation (\ref{eq:idents}) for $i<j$. Thus we may further reduce our identity modulo the equivalence above and assume that our identity is of the form
$$\sum_{i=1}^s (X_i^p-X_i)(X_i^p-X_i)\hat{Q}_{i,i}.$$

Now we fix $i\in \{1,\ldots ,s\}$ and for $(\lambda_1,\ldots ,\lambda_s)\in \mathbb{F}_p$, we specialize 
$X_k=\lambda_k$ and $X_i=\lambda_i +u+v$.  Then $X_k^p-X_k$ becomes zero under this specialization for $k\neq i$ and $(X_i^p-X_i)^2$ becomes $vu$.  Thus the same argument as above shows that 
$\hat{Q}_{i,i}$ is an identity for $\mathbb{F}_p$ and that $ (X_i^p-X_i)(X_i^p-X_i)\hat{Q}_{i,i}$ is implied by the identities in Equation (\ref{eq:idents}) for $i=1,\ldots ,s$.  Thus $P\equiv 0$ and the result now follows.

\end{proof}

We point out that, beyond traditional polynomial identities, there is a large body of work dealing with \emph{functional identities}, which are more general and have been developed by Bre\v{s}ar and others (see, for example, \cite{Bres1, Bres2}).  Many natural classes of functional identities yield commutativity theorems---for example the fixed-degree case of Herstein's result on multiplicative commutators in division ring \cite{Her3} can be cast in this framework---and it is natural to ask to what extent the results given here can be extended to this more general framework.

We conclude this paper by raising a question.  Theorem \ref{thm:algorithm} is a theorem for ring identities, but one can instead fix a finitely generated commutative $\mathbb{Z}$-algebra $C$ (e.g., the ring of integers in a number field or a finite field) and work in the category of $C$-algebras and consider polynomial identities with coefficients in $C$.  It is possible that the approach we use could be used to deal with certain interesting classes of commutative base rings $C$, but we do not know of an algorithm that works for a general finitely generated commutative base ring $C$.  We note, however, that our approach applies to the case when $C$ a homomorphic image of $\mathbb{Z}$: in this case one can lift the identities to identities over the integers; the condition that our rings be $C$-algebras then puts an additional constraint on the characteristic of the ring when $C\neq\mathbb{Z}$.  In particular, we can use the algorithm provided in \S3, but where we restrict our focus to rings in the various classes whose characteristic divides the characteristic of $C$.  
\begin{question} Let $C$ be finitely generated commutative ring.  Given a finite presentation of $C$ and a finite set of polynomial identities $P_1=\cdots =P_m=0$, with $P_1,\ldots ,P_m\in C\{X_1,\ldots ,X_s\}$ for some $s\ge 1$, is there a decision procedure that takes the data from the presentation of $C$ and the polynomials $P_1,\ldots ,P_m$ as input and decides after a finite number of steps whether or not every $C$-algebra for which these identities all simultaneously hold is commutative? 
\end{question}
This question is especially interesting in the cases when $C$ is either a number ring (i.e., the ring of algebraic integers in a finite field extension of $\mathbb{Q}$) or when $C$ is a finite field.  In these cases, one might be able to extend the approach given in this paper to this setting, although it would require an extension of Theorem \ref{thm:trichotomy} to such $C$-algebras.
\vskip 2mm

\noindent{\bf Funding:} The work of Jason P. Bell was supported by NSERC Discovery Grant RGPIN-2016-03632. The work of Peter V. Danchev was partially supported by the Bulgarian National Science Fund under Grant KP-06 No 32/1 of December 07, 2019.

\vskip2pc

\end{document}